\newcommand{\field}[1]{\mathbb{#1}}
\newcommand{\A}{\field{A}}
\newcommand{\G}{\field{G}}
\newcommand{\N}{\field{N}}
\newcommand{\Q}{\field{Q}}
\newcommand{\Z}{\field{Z}}
\newcommand{\krn}{{\rm ker}\,}
\newtheorem{theorem}{Theorem}[section]
\newtheorem{lemma}[theorem]{Lemma}
\newtheorem{corollary}[theorem]{Corollary}
\newtheorem{definition}[theorem]{Definition}
\newtheorem{remark}[theorem]{Remark}
\newtheorem{example}[theorem]{Example}
\begin{document}

\makeatletter	   
\makeatother     

\title{Cable Algebras and Rings of $\G_a$-Invariants}
\author{Gene Freudenburg and Shigeru Kuroda}
\date{\today} 
\subjclass[2010]{13A50,14R20}
\maketitle

\pagestyle{plain}

\begin{abstract} For a field $k$, the ring of invariants of an action of the unipotent $k$-group $\G_a$ on an affine $k$-variety is quasi-affine, but not generally affine. Cable algebras are introduced as a framework for studying these invariant rings. It is shown that the ring of invariants  for the $\G_a$-action on $\A^5_k$ constructed by Daigle and Freudenburg is a monogenetic cable algebra. A generating cable is constructed for this ring, and a complete set of relations is given as a prime ideal in the infinite polynomial ring over $k$. 
In addition, it is shown that the ring of invariants for the well-known $\G_a$-action on $\A^7_k$ due to Roberts is a cable algebra. 
\end{abstract}
 
\section{Introduction}

We introduce cable algebras to describe the structure of rings of invariants for algebraic actions of the unipotent group $\G_a$ on affine varieties over a ground field $k$. Winkelmann \cite{Winkelmann.03} has shown that such rings are always quasi-affine over $k$, but they are not generally affine. 
Roberts \cite{Roberts.90} gave the first example of a non-affine invariant ring for a $\G_a$-action on an affine space. 
Specifically, Roberts' example involved an action of $\G_a$ on the affine space $\A_k^7$, where $k$ is of characteristic zero. Subsequent examples of $\G_a$-actions of non-finite type were constructed by Freudenburg, and by Daigle and Freudenburg, for $\A_k^6$ and $\A_k^5$, respectively \cite{Freudenburg.00, Daigle.Freudenburg.99}. These examples are counterexamples to Hilbert's Fourteenth Problem. 

Kuroda \cite{Kuroda.04b} used 
SAGBI basis techniques to show that an infinite system of invariants constructed by Roberts for the action on $\A_k^7$ generates the invariant ring as a $k$-algebra. Tanimoto \cite{Tanimoto.06} used the same techniques to identify generating sets for the actions on $\A_k^6$ and $\A_k^5$. Our results show that Tanimoto's generating sets are not minimal; see {\it {\S}\ref{Tanimoto}}. 
From the point of view of classical invariant theory, a structural description of a ring of invariants involves determination of a minimal set of generators of the ring as a $k$-algebra, together with a minimal set of generators for the ideal of their relations. However, for an infinite set of generators, or even a large finite set of generators, such a description can be complicated, and the choice of generating set can seem arbitrary. 

When $k$ is of characteristic zero,
$\G_a$-actions on an affine $k$-variety $X$ are equivalent to locally nilpotent derivations of the coordinate ring $k[X]$, and the invariant ring $k[X]^{\G_a}$ equals the kernel of the derivation. In many cases, $k[X]^{\G_a}$ admits a non-zero locally nilpotent derivation, and this gives additional structure to exploit. 

For a commutative $k$-domain $B$, a locally nilpotent derivation $D$ of $B$ induces a directed tree structure on $B$. A {\it $D$-cable} is any complete linear subtree rooted in the kernel of $D$. The condition for $B$ to be a cable algebra is a finiteness condition: $B$ is a {\it cable algebra} if (for some $D$) $D\ne 0$ and $B$ is generated by a finite number of $D$-cables over the kernel of $D$. $B$ is a {\it simple cable algebra} if it is generated by one $D$-cable over $k$. 
Elements in the ideal of relations in the infinite polynomial ring for the generating cables are {\it cable relations}. 

To illustrate, consider a nilpotent linear operator $N$ on a finite-dimensional $k$-vector space $V$. Choose a basis $\{ x_{i,j}\, |\, 1\le i\le m,1\le j\le n_i\}$ of $V$ so that the effect of $N$ for fixed $i$ is:
\[
 x_{i,n_i}\to x_{i,n_i-1}\to\cdots\to x_{i,2}\to x_{i,1}\to 0
 \]
This defines the Jordan form of $N$, which in turn gives a cable structure on the symmetric algebra $S(V)$. In particular, $N$ induces a locally nilpotent derivation $D$ on $S(V)$, and each sequence 
$x_{i,j}$ for fixed $i$ is a $D$-cable $\hat{x}_i$, where $S(V)=k[\hat{x}_1,...,\hat{x}_m]$. In this sense, the cable algebra structure induced by a locally nilpotent derivation can be viewed as a generalization of Jordan block form for a nilpotent linear operator. 

For rings of non-finite type over $k$, the ring $S=k[x,xv,xv^2,...]$ is a prototype, where $k[x,v]$ is the polynomial ring in two variables over $k$. 
The partial derivative $\partial/\partial v$ restricts to a locally nilpotent derivation $D$ of $S$, and the infinite sequence $\frac{1}{n!}xv^n$ defines a $D$-cable 
$\hat{s}$ for which $S=k[\hat{s}]$. So $S$ is a simple cable algebra. Although $S$ is not quasi-affine, it plays an important role in our investigation. For example, one of our main objects of interest is
the ring $A$ of invariants for the $\G_a$-action on $\A^5$ constructed by Daigle and Freudenburg, and we show that $A$ admits a mapping onto $S$. 

\subsection{Description of Main Results}
We assume throughout that $k$ is a field of characteristic zero. On the polynomial ring $B=k[a,v,x,y,z]=k^{[5]}$, define the locally nilpotent derivation $D$ of $B$ by:
\[
\small
D=a^3\frac{\partial}{\partial x} + x\frac{\partial}{\partial y} +  y\frac{\partial}{\partial z}  +a^2\frac{\partial}{\partial v}
\]
For the corresponding $\G_a$-action on $X=\A^5_k$, the ring of invariants $k[X]^{\G_a}$ is not finitely generated over $k$; see \cite{Daigle.Freudenburg.99}. 

If $A=\krn D$, the kernel of $D$, then
the partial derivative $\frac{\partial}{\partial v}$ restricts to $A$, and $\partial$ denotes the restriction of $\frac{\partial}{\partial v}$ to $A$. 
We give a complete description of the ring $A$ as a cable algebra relative to $\partial$, including its relations as a cable ideal in the infinite polynomial ring $\Omega =k[x_0,x_1,x_2,...]$. Moreover, we construct a specific 
$\partial$-cable $\hat{\sigma}=(\sigma_n)$ from these relations, wherein $\sigma_{n+1}$ is expressed as an explicit rational function in $\sigma_0,...,\sigma_n$. Our proofs do not use SAGBI bases, relying instead on properties of the down operator $\Delta$ on $\Omega$, a $k$-derivation defined by:
\[
\Delta x_i=x_{i-1}\,\, (i\ge 1)\quad {\rm and}\quad \Delta x_0=0
\]
Let $\Omega [t]=\Omega^{[1]}$ and extend $\Delta$ to $\tilde{\Delta}$ on $\Omega [t]$ by $\tilde{\Delta}t=0$. 
\medskip

\begin{itemize}
\item [] {\bf Generators} (Theorem~\ref{A-generators}):
{\it There exists an infinite homogeneous $\partial$-cable $\hat{s}$ rooted at $a$, and for any such $\partial$-cable we have $A=k[h,\hat{s}]$ for $h\in\krn\partial$. Moreover, this is a minimal generating set for $A$ over $k$. }
\smallskip
\item [] {\bf Relations} (Theorem~\ref{A-relations}): {\it There exists an ideal $\mathcal{I}=(\hat{\Theta}_4, \hat{\Theta}_6, \hat{\Theta}_8,...)$ in $\Omega [t]$ generated by quadratic homogeneous $\tilde{\Delta}$-cables 
$\hat{\Theta}_n$ such that:
\[
A\cong \Omega [t]/\mathcal{I} 
\]
The cables $\hat{\Theta}_n$ depend on the choice of $\partial$-cable $\hat{s}$ and can be constructed explicitly from $\hat{s}$.}
\medskip
\item [] {\bf Constructs} (Theorem~\ref{A-constructs}): {\it Let $A_a$ be the localization of $A$ at $a$, and define a sequence $\sigma_n\in A_a$ by
$\sigma_0=a$ and:
\[\textstyle
\sigma_1=av-x\,\, ,\,\, \sigma_2 = \frac{1}{2}(av^2-2xv+2a^2y) \,\, ,\,\, 
\sigma_3 = \frac{1}{6}(av^3-3xv^2+6a^2yv-6a^4z)
\]
Given $n\ge 4$, let $e\ge 1$ be such that $-2\le n-6e\le 3$. If $\sigma_0,...,\sigma_{n-1}\in A_a$ are known, define $\sigma_n\in A_a$ implicitly as follows.
\medskip
\begin{itemize}
\item [(i)] If $n=6e-2$ or $n=6e+2$, then $\sum_{i=0}^n(-1)^i\sigma_i\sigma_{n-i} = 0$
\medskip
\item [(ii)] If $n=6e-1$ or $n=6e+3$, then $\sum_{i=1}^n(-1)^ii\sigma_i\sigma_{n-i} = 0$
\medskip
\item [(iii)] If $n=6e$, then $\sum_{i=0}^{n+2}(-1)^i\left( 3i(i-1)-n(n+2)\right) \sigma_i\sigma_{n+2-i} = 0$
\medskip
\item [(iv)] If $n=6e+1$, then $\sum_{i=1}^{n+3}(-1)^{i+1}\left( (i-1)(i-2)-n(n+2)\right) i \sigma_i\sigma_{n+3-i} = 0$
\end{itemize}
\medskip
Then $\sigma_n\in A$ for each $n\ge 0$ and $\hat{\sigma}=(\sigma_n)$ is a $\partial$-cable rooted at $a$. }
\end{itemize}
\medskip

As seen in these results, quadratic relations in $\Omega$ are especially important. 
A basis for the vector space of quadratic forms in $\krn\Delta$ is given by $\{ \theta_n^{(0)}\, |\, n\in 2\N \}$, where:
\[
\theta_n^{(0)} = \sum_{i=0}^n(-1)^ix_ix_{n-i}
\]
If $\{ \hat{\theta}_n\}$ is any system of quadratic $\Delta$-cables with $\hat{\theta}_n$ rooted at $\theta_n^{(0)}$, then the vertices of these cables form a basis for
$\Omega_2$, the space of quadratic forms in $\Omega$ ({\it Lem.~\ref{lemma1}}). Moreover, the quadratic ideals 
\[
\mathcal{Q}_n=(\hat{\theta}_n,\hat{\theta}_{n+2},\hat{\theta}_{n+4},...) \,\, ,\,\, n\in 2\N 
\]
are independent of the system of cables chosen ({\it Thm.~\ref{Q-ideal}}). These ideals, called {\it fundamental Q-ideals}, are intrinsically important to the theory at hand. Compare this to the linear case: The only linear form in $\krn\Delta$ is $x_0$, up to constant, and if $\hat{L}=(L_n)$ is any homogeneous $\Delta$-cable rooted at $x_0$, then the linear forms $L_n$, $n\ge 0$, form a basis of 
the space of linear forms $\Omega_1$, and we have equality of $\Omega$-ideals:
\[
(\hat{L})=(L_0,L_1,L_2,...)=(x_0,x_1,x_2,...)
\]
Therefore, $\Omega /(\hat{L})=k$ and $(\hat{L})$ is a maximal ideal of $\Omega$. 

We show the following.
\begin{itemize}
\medskip
\item [] (Theorem~\ref{S-relations}) {\it $\mathcal{Q}_2$ is a prime ideal of $\Omega$ and $\Omega/\mathcal{Q}_2\cong_kS$, 
where $S\subset k[x,v]=k^{[2]}$ is the simple cable algebra of non-finite type and of transcendence degree 2 over $k$ defined by $S=k[x,xv,xv^2,...]$.}
\medskip
\item [] (Theorem~\ref{Abar-relations}) {\it $\mathcal{Q}_4$ is a prime ideal of $\Omega$ and $\Omega/\mathcal{Q}_4\cong_kA/hA$, which is a simple cable algebra of non-finite type and of transcendence degree 3 over $k$. }
\end{itemize}
\medskip

Finally, we show that the ring of invariants for the Roberts action in dimension 7 is a cable algebra. On the polynomial ring
$k[X,Y,Z,S,T,U,V]$, define the locally nilpotent derivation:
\[\small
\mathcal{D}_2=X^3\frac{\partial}{\partial S} +Y^3 \frac{\partial}{\partial T} +Z^3 \frac{\partial}{\partial U}  + (XYZ)^2\frac{\partial}{\partial V}
\]
$\mathcal{D}_2$ commutes with the 3-cycle $\alpha$ defined by $\alpha (X,Y,Z,S,T,U,V)=(Z,X,Y,U,S,T,V)$. 
The partial derivative $\partial /\partial V$ restricts to the kernel $\mathcal{A}_2$ of $\mathcal{D}_2$, and $\delta_2$ denotes the restricted derivation. 
\begin{itemize}
\medskip
\item [] (Theorem~\ref{Roberts}) {\it There exists a $\delta_2$-cable $\hat{P}$ in $\mathcal{A}_2$ rooted at $X$, and for any such $\delta_2$-cable,
\[
\mathcal{A}_2 = k[H_2,\alpha H_2,\alpha^2 H_2, \hat{P},\alpha\hat{P},\alpha^2\hat{P}]
\]
where $H_2\in\krn\delta_2$. }
\end{itemize}

\subsection{Additional Background}

Let $K$ be any field. For $n\le 3$, the ring of invariants of a $\G_a$-action on $\A^n_K$ is of finite type, due to a fundamental theorem of Zariski. 
It is not known if the ring of invariants of a $\G_a$-action on $\A^4_K$ is always of finite type; see {\it \S\ref{dim4}}. 
According to the classical Mauer-Weitzenb\"ock Theorem, if the characteristic of $K$ is zero, then $K[\A^n_K]^{\G_a}$ is of finite type when 
$\G_a$ acts on $\A^n_K$ by linear transformations. However, it is not known if this is true for all fields. 
To date, there is no known example of a field $K$ of positive characteristic and a $\G_a$-action on $\A^n_K$ for which 
$K[\A^n_K]^{\G_a}$ is of non-finite type.

\subsection{Outline}

\begin{itemize}  
\item [1.] Introduction
\item [2.] Locally Nilpotent Derivations
\item [3.] Cables and Cable Algebras
\item [4.] The Derivation $D$ in Dimension Five
\item [5.] Generators of $\bar{A}$ and $A$
\item [6.] Relations in $\bar{A}$
\item [7.] Relations in $A$
\item [8.] Roberts' Derivations in Dimension Seven
\item [9.] Further Comments and Questions
\end{itemize} 


\section{Locally Nilpotent Derivations}\label{LNDs}  
Let $k$ be a field of characteristic zero and let $B$ be a commutative $k$-domain. A {\it locally nilpotent derivation} of $B$ is a derivation $D:B\to B$ such that, for each $b\in B$, there exists $n\in\N$ (depending on $b$) such that $D^nb=0$. 
Let $\krn D$ denote the kernel of $D$. The set of locally nilpotent derivations of $B$ is denoted by ${\rm LND}(B)$. Note that $k\subset \ker D$ for any $D\in\text{LND}(B)$
(c.f.~{\it Principle 1} of \cite{Freudenburg.06}). 

It is well known that the study of $\G_a$-actions on an affine $k$-variety $X$ is equivalent to the study of locally nilpotent derivations on the corresponding coordinate ring $k[X]$. In particular, the action induced by $D\in {\rm LND}(B)$ is given by the exponential map $\exp (tD)$, $t\in\G_a$, and $k[X]^{\G_a}=\krn D$. 

In this section, we give some of the basic properties for rings with locally nilpotent derivations. The reader is referred to \cite{Freudenburg.06} for further details of the subject. 

\subsection{Basic Definitions and Properties} Given $D\in {\rm LND}(B)$, 
if $A=\krn D$, then $A$ is filtered by the {\it image ideals} :
\[
I_n:=A\cap D^nB \quad (n\ge 0) \quad {\rm and}\quad I_{\infty}:=\cap_{n\ge 0}I_n
\]
Note that $I_0=A$ and $I_{n+1}\subset I_n$ for $n\ge 0$. $I_1$ is called the {\it plinth ideal} for $D$, and $I_{\infty}$ is called the {\it core ideal} for $D$. 

A {\it slice} for $D$ is any $s\in B$ such that $Ds=1$. Note that $D$ has a slice if and only if $D:B\to B$ is surjective.  

A {\it local slice} for $D$ is any $s\in B$ such that $D^2s=0$ but $Ds\ne 0$. For a local slice $s\in B$ of $D$, let $B_{Ds}$ and $A_{Ds}$ denote the localizations of $B$ and $A$ at $Ds$, respectively. Then $B_{Ds}=A_{Ds}[s]$, where $s$ is transcendental over $A_{Ds}$. 
Given $b\in B$, $\deg_Db$ is the degree of $b$ as a polynomial in $s$, which is independent of the choice of local slice $s$.
The corresponding {\it Dixmier map} $\pi_s:B_{Ds}\to A_{Ds}$ is the algebra map defined by:
\[
\pi_s(f) = \sum_{i\ge 0}\frac{(-1)^i}{i!}D^if\cdot \left(\frac{s}{Ds}\right)^i
\]
If $E$ is any $k$-derivation of $B$ which commutes with $D$, then it is immediate from this definition that:
\begin{equation}\label{commute}
E\pi_s(f) = \pi_s(Ef)-\pi_s(Df)E(s/Ds)
\end{equation}

Let $S\subset B$ be a non-empty subset, and let $k\subset R\subset A$ be a subring.  Define the subring:
\[
R[S,D]=R[D^is\, |\, s\in S, i\ge 0]
\]
Note that $D$ restricts to $R[S,D]$, and that $R[S,D]$ is the smallest subring of $B$ containing $R$ and $S$ to which $D$ restricts. 

\subsection{The Down Operator} Let $\Omega =k[x_0,x_1,x_2,...]$ be the infinite polynomial ring, and let $\Omega_+$ be the ideal of $\Omega$ defined by:
\[
\Omega_+= \sum_{n\ge 0}x_n\cdot\Omega
 \]
Let $\Delta\in {\rm LND}(\Omega )$ denote the {\it down operator} on $\Omega$:
\[
\Delta x_n=x_{n-1} \,\, (n\ge 1) \quad {\rm and}\quad \Delta x_0=0
\]
Then $\Delta :\Omega_+\to\Omega_+$ is surjective  (\cite{Freudenburg.13},Thm.{\,}3.1). 

$\Omega$ has a $\Z^2$-grading defined by $\deg x_i=(1,i)$, where each $x_i$ is homogeneous ($i\ge 0$). For this grading, $\Delta$ is homogeneous and $\deg\Delta =(0,-1)$. 
Given $r,s\ge 0$, let $\Omega_{(r,s)}$ denote the set of elements of $\Omega$ of degree $(r,s)$, and let $\Omega_r=\sum_s\Omega_{(r,s)}$.  
Then $\Delta :\Omega_{(r,s)}\to\Omega_{(r,s-1)}$ is surjective for each $r,s\ge 1$. 

\subsection{Tree Structure Induced by an LND}
Let $B$ be a commutative $k$-domain. To any $D\in {\rm LND}(B)$ we associate the rooted tree ${\rm Tr}(B,D)$ whose vertex set is $B$, and whose (directed) edge set consists of pairs $(f,Df)$, where $f\ne 0$. 
Equivalently, ${\rm Tr}(B,D)$ is the tree defined by the partial order on $B$ defined by: $a\le b$ iff $D^nb=a$ for some $n\ge 0$. 

Let $A=\krn D$. 

\begin{itemize}
\item [(i)]  Given $a,b\in B$ with $b\ne 0$, $b$ is a {\it predecessor} of $a$ if and only if $a$ is a {\it successor} of $b$ if and only if $a<b$.
Similarly, $b$ is an {\it immediate predecessor} of $a$ if and only if $a$ is an {\it immediate successor} of $b$ if and only if $Db=a$. 
\smallskip
\item [(ii)] The {\it terminal vertices} of ${\rm Tr}(B,D)$ are those without predecessors, i.e., elements of $B\setminus DB$. 
If $D$ has a slice, i.e., $DB=B$, then ${\rm Tr}(B,D)$ has no terminal vertices. 
\smallskip
\item [(iii)] Every subtree $X$ of ${\rm Tr}(B,D)$ has a unique root, denoted ${\rm rt}(X)$. 
\smallskip
\item [(iv)] A subtree $X$ of ${\rm Tr}(B,D)$ is {\it complete} if every vertex of $X$ which is not terminal in ${\rm Tr}(B,D)$ has at least one predecessor in $X$. 
\smallskip 
\item [(v)] A subtree $X$ of ${\rm Tr}(B,D)$ is {\it linear} if every vertex of $X$ has at most one immediate predecessor in $X$. 
\smallskip
\item [(vi)] If $B$ is graded by an abelian group, then any homogeneous $b\in B$ is a {\it homogeneous vertex} of ${\rm Tr}(B,D)$.  
A subtree $X$ of ${\rm Tr}(B,D)$ is {\it homogeneous} if every $b\in {\rm vert}(X)$ is homogeneous. 
If $D$ is homogeneous, then the {\it full homogeneous subtree} is the subtree of ${\rm Tr}(B,D)$ spanned by the homogeneous vertices.  
\end{itemize}

\section{Cables and Cable Algebras}

\subsection{$D$-Cables} \label{D-cables}

\begin{definition}{\rm Let $B$ be a commutative $k$-domain and let $D\in {\rm LND}(B)$. 
A {\it $D$-cable} is a complete linear subtree $\hat{s}$ of ${\rm Tr}(B,D)$ rooted at a non-zero element of $\krn D$. 
$\hat{s}$ is a {\it terminal} $D$-cable if it contains a terminal vertex. $\hat{s}$ is an {\it infinite} $D$-cable if it is not terminal.}
\end{definition}
We make several remarks and further definitions, assuming that $B$ is a commutative $k$-domain, $D\in {\rm LND}(B)$, $I_n=\krn D\cap D^nB$ ($n\ge 0$) and $I_{\infty}=\cap_{n\ge 0}I_n$. 
\begin{itemize}
\item [(i)]  If $\hat{s}$ is a $D$-cable, then $\hat{s}$ is terminal if and only if its vertex set is finite, and $\hat{s}$ is infinite if and only if $\hat{s}\subset DB$. 
\smallskip
\item [(ii)] A $D$-cable is denoted by $\hat{s}=(s_j)$, where $s_j\in B$ for $j\ge 0$ and $Ds_j=s_{j-1}$ for $j\ge 1$. 
It is rooted at $s_0\in\krn D$, which is non-zero. For multiple $D$-cables $\hat{s}_1,...,\hat{s}_n$, we will write $\hat{s}_i=(s_i^{(j)})$ for $1\le i\le n$ and $j\ge 0$. 
\smallskip
\item [(iii)] The {\it length} of a $D$-cable $\hat{s}$ is the number of its edges (possibly infinite), denoted ${\rm length}(\hat{s})$. 
If $\hat{s}=(s_n)$ and $N={\rm length}(\hat{s})$, then $s_0\in I_N$, and if $\hat{s}$ is terminal, then $s_N$ is its terminal vertex. 
\smallskip
\item [(iv)] Every $b\in\krn D\setminus DB$ is a terminal vertex of ${\rm Tr}(B,D)$ and defines a terminal $D$-cable of length zero. 
\smallskip
\item [(v)] If $B$ is graded by an abelian group, then a $D$-cable is {\it homogeneous} if it is a homogeneous subtree of ${\rm Tr}(B,D)$. 
\smallskip
\item [(vi)] Every non-zero vertex $b\in B$ belongs to a $D$-cable. If two $D$-cables $\hat{s}=(s_n)$ and $\hat{t}=(t_n)$ have $s_m=t_n$ for some $m,n\ge 0$, then $m=n$ and 
$s_i=t_i$ for all $i\le m$. If $\hat{s}$ and $\hat{t}$ share an infinite number of vertices, then $\hat{s}=\hat{t}$.
\smallskip
\item [(vii)] Suppose that $B^{\prime}\subset B$ is a subset with $DB^{\prime}\subset B^{\prime}$. If $\hat{s}\subset B$ is a $D$-cable such that either $\hat{s}\cap B^{\prime}$ is infinite, or $\hat{s}$ is terminal of length $N$ and $s_N\in B^{\prime}$, then $\hat{s}\subset B^{\prime}$.  
\smallskip
\item [(viii)] If $P\in\Omega$ is a polynomial in $x_0,...,x_n$ and $\hat{s}$ is a $D$-cable of length at least $n$, then $P(\hat{s})$ means $P(s_0,...,s_n)$.
\smallskip
\item [(ix)] Given $D$-cables $\hat{s}_1,...,\hat{s}_n$ for $n\ge 0$, the notation $k[\hat{s}_1,...,\hat{s}_n]$ (respectively $(\hat{s}_1,...,\hat{s}_n)$) indicates the $k$-subalgebra of $B$ (respectively, ideal of $B$) generated by the vertices of $\hat{s}_i$ for $1\le i\le n$.
\smallskip
\item [(x)] Let $\hat{s}=(s_n)$ be a $D$-cable of length $N$. If $\hat{s}$ is terminal, define the map $\phi_{\hat{s}} :k^{[N+1]}\to B$ by $\phi _{\hat{s}}(x_i)=s_i$ 
for $0\le i\le N$. If $\hat{s}$ is infinite, define $\phi_{\hat{s}} :\Omega\to B$ by $\phi_{\hat{s}} (x_i)=s_i$ for all $i\ge 0$. Elements of $\krn\phi_{\hat{s}}$ are the {\it cable relations} associated to $\hat{s}$. Note that 
$D\phi_{\hat{s}} = \phi_{\hat{s}}\Delta$ where $\Delta$ is the down operator on $\Omega$ or its restriction to $k^{[N+1]}$. 
\smallskip
\item[(xi)] Extend $D$ to a derivation $D^*$ on $B[t]=B^{[1]}$ by $D^*t=0$. If $\hat{s}(t)=(s_n(t))$ is a $D^*$-cable 
and $\alpha \in \ker D$ is such that 
$s_0(\alpha )\ne 0$, then $\hat{s}(\alpha )=(s_n(\alpha ))$ is a $D$-cable rooted at $s_0(\alpha )$.
\end{itemize} 
\medskip

\begin{example} {\rm 
Let $\Omega =k[x_0,x_1,x_2,...]$ be the infinite polynomial ring and $\Delta\in {\rm LND}(\Omega )$ the down operator. Then $\hat{x}=(x_j)_{j\ge 0}$ is an infinite $\Delta$-cable, 
$x_0\in I_{\infty}$, and $\Omega = k[\hat{x}]$. Relabel the variables $x_i$ by $y_n^{(j)}$ so that $\Omega = k[x_0, y_n^{(j)}\, |\, n\ge 1, 1\le j\le n]$. Define $\tilde{\Delta}\in {\rm LND}(\Omega )$ so that, for $n\ge 1$:
\[
\tilde{\Delta}: y_n^{(n)}\to y_n^{(n-1)}\to\cdots\to y_n^{(1)}\to y_n^{(0)}:=x_0\to 0
\]
Then $\hat{y}_n:=(y_n^{(j)})_{0\le j\le n}$ is a terminal $\tilde{\Delta}$-cable rooted at $x_0$ of length $n$ for each $n\ge 1$. If $\tilde{I}_{\infty}$ is the core ideal for $\tilde{\Delta}$, then $x_0\in\tilde{I}_{\infty}$ but there is no infinite $\tilde{\Delta}$-cable rooted at $x_0$, since otherwise there would exist a homogeneous 
infinite $\tilde{\Delta}$-cable rooted at $x_0$. It is easy to check that this is not the case.
}
\end{example}

In general, addition of $D$-cables is not well-defined, since $B\setminus DB$ is not closed under addition. However, an infinite $D$-cable $\hat{s}$ has $\hat{s}\subset DB$, and addition can be defined for certain pairs in this case. The next result gives basic operations on $D$-cables. These follow immediately from the definitions. 

\begin{lemma}\label{ops} Let $B$ be a commutative $k$-domain, let $D\in {\rm LND}(B)$, and let $A=\krn D$. 
\begin{itemize} 
\item [{\bf (a)}] If $\hat{s}=(s_n)$ is a $D$-cable and $a\in A$ is non-zero, then $a\hat{s}:=(as_n)$ is a $D$-cable of the same length. 
\smallskip
\item [{\bf (b)}] If $\hat{s}=(s_n)$ and $\hat{t}=(t_n)$ are infinite $D$-cables and $s_0+t_0\ne 0$, 
then $\hat{s}+\hat{t}:=(s_n+t_n)$ is an infinite $D$-cable. 
\smallskip
\item [{\bf (c)}] If $\hat{s}=(s_n)$ and $\hat{t}=(t_n)$ are infinite $D$-cables and $m\in\Z$ has $m\ge 1$, define the sequence $u_n\in B$ by $u_n=s_n$ if $n<m$ and $u_n=s_n+t_{n-m}$ if $n\ge m$. 
Then $\hat{u} := (u_n)$ is an infinite $D$-cable. 
\end{itemize}
\end{lemma}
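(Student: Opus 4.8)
The plan is to verify, for each of the three sequences in turn, the defining requirements of a $D$-cable: that consecutive terms are joined by an edge of ${\rm Tr}(B,D)$ (i.e.\ $D$ carries the $n$-th term to the $(n-1)$-st), that the root is a nonzero element of $\krn D$, and that the resulting subtree is complete and linear.

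The edge relations and the root conditions will be immediate from the hypothesis $a\in A=\krn D$ together with the Leibniz rule and $k$-linearity of $D$. In (a), $D(as_n)=(Da)s_n+a\,Ds_n=as_{n-1}$ for $n\ge 1$, so $(as_n,as_{n-1})$ is an edge; the root $as_0$ lies in $\krn D$ as a product of kernel elements and is nonzero because $B$ is a domain and $a,s_0\ne 0$. The same computation, now using only additivity of $D$, gives $D(s_n+t_n)=s_{n-1}+t_{n-1}$ in (b); in (c) one splits into the cases $n<m$, $n=m$, and $n>m$ and reads off $Du_n=u_{n-1}$ directly, the root $u_0=s_0$ being a nonzero kernel element because $\hat s$ is a $D$-cable. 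Writing $w_n$ for the $n$-th term of any of these three sequences, one checks uniformly that $D^nw_n$ equals the root and hence is nonzero; consequently linearity of each subtree holds because the terms are pairwise distinct, since $w_m=w_n$ with $m<n$ would give $0=D^{m+1}w_m=D^{m+1}w_n=w_{n-m-1}\ne 0$. Thus the only candidate immediate predecessor of $w_n$ inside the subtree is $w_{n+1}$.

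The remaining, and essential, point is completeness. For (b), for (c), and for (a) when $\hat s$ is infinite, the sequence produced is an \emph{infinite} chain $(w_n)_{n\ge 0}$ with $Dw_n=w_{n-1}$ and $w_0\in\krn D\setminus\{0\}$, and such a chain is automatically a complete subtree, since every vertex $w_n$ has the predecessor $w_{n+1}$ lying in the chain; so these cases give infinite $D$-cables and are done. The one case that needs a genuine argument is (a) for a \emph{terminal} cable $\hat s=(s_0,\dots,s_N)$: the finite chain $a\hat s=(as_0,\dots,as_N)$ already has $as_{n+1}$ in the chain as a predecessor of $as_n$ for each $n<N$, so the claim that $a\hat s$ is a $D$-cable of length $N$ reduces to showing that its top vertex $as_N$ is terminal in ${\rm Tr}(B,D)$, i.e.\ that multiplication by a nonzero element of $\krn D$ carries $B\setminus DB$ into itself. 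This is the step I expect to be the main obstacle. I would attack it through the image-ideal filtration $I_n=A\cap D^nB$ and the $D$-degree function --- observing that $as_N=Dc$ would force $as_0=D^{N+1}c\in I_{N+1}$ --- thereby reducing the matter to understanding how membership in $D^nB$ transforms under multiplication by a kernel element, which is precisely where any further hypothesis or subtlety would enter.
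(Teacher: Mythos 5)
Your verification of the edge relations, the roots, the distinctness of the vertices, and hence linearity, is correct, and your observation that completeness is automatic for an infinite chain (each vertex has the next one as a predecessor) disposes of (b), (c), and (a) when $\hat s$ is infinite. The paper offers no argument at all here --- it asserts that all three parts ``follow immediately from the definitions'' --- and for those cases your write-up is exactly the routine check that assertion is pointing at. It is also worth noting that every use of part (a) in the paper (the definition of $\lim(\vec s,\vec m,\vec c)$, the $m$-shifted sums, $\exp(fD)(\hat s)$, the reduction of a $\Delta$-basis) applies it only to \emph{infinite} cables, so for the paper's purposes the lemma is indeed immediate.

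The step you defer --- that for a terminal cable $\hat s=(s_0,\dots,s_N)$ the top vertex $as_N$ remains terminal, i.e.\ that $B\setminus DB$ is stable under multiplication by a nonzero element of $\krn D$ --- is not merely the main obstacle: it is false, so no completion of your plan can work. Take $B=k[x,y]$ and $D=x\,\partial/\partial y$, so that $\krn D=k[x]$ and $DB=xB$. Then $\hat s=(x,y)$ is a terminal $D$-cable of length $1$ (since $y\notin xB$), but with $a=x$ we get $as_1=xy=D\bigl(\tfrac{1}{2}y^2\bigr)\in DB$, so $xy$ is not terminal and has no predecessor in $\{x^2,xy\}$; hence $a\hat s=(x^2,xy)$ is not a complete subtree and is not a $D$-cable (it only extends to the longer cable $(x^2,xy,\tfrac{1}{2}y^2)$). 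Your proposed attack via the image ideals cannot detect this: from $as_N=Dc$ you only conclude $as_0\in I_{N+1}$, which is no contradiction --- in the example $x^2=D^2\bigl(\tfrac{1}{2}y^2\bigr)$ lies in $I_2$ even though $x\in I_1\setminus I_2$. The honest resolution is to restrict part (a) to infinite cables (or, in the terminal case, to add the hypothesis $as_N\notin DB$, equivalently to weaken the conclusion to say that $(as_n)$ is an initial segment of a $D$-cable). Your instinct that this is precisely where a further hypothesis must enter was correct; the proposal should say so outright rather than leave the step open.
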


\begin{definition} {\rm The $D$-cable $\hat{u}$ in part (c) of {\it Lem.~\ref{ops}} is called the {\it $m$-shifted sum} of $\hat{s}$ and $\hat{t}$, and is denoted by
$\hat{u}=\hat{s}+_m\hat{t}$.} 
\end{definition}

\begin{definition} {\rm Let $I\subset\N$ be either $\N \setminus \{ 0\} $ or $\{ 1,2,\ldots ,t\} $ for some integer $t\ge 1$.
Suppose that a sequence $\vec{s}=\{\hat{s}_i\}_{i\in I}$ of infinite $D$-cables is given, together with a strictly increasing sequence
$\vec{m}=\{ m_i\}_{i\in I}$ of positive integers, and a sequence $\vec{c}=\{ c_i\}_{i\in I}$ with $c_i\in \ker D\setminus \{ 0\}$ for all $i\in I$. 
Define a sequence of $D$-cables $\hat{u}_i$ rooted at $s_1^{(0)}$ inductively by:
\[
\hat{u}_1 = \hat{s}_1 \quad {\rm and}\quad
\hat{u}_i = \hat{u}_{i-1} +_{m_i}c_i\hat{s}_i \quad {\rm for}\,\, i\in I \, , i\ge 2
\]
Note that, if $\hat{u}_i=(u_i^{(j)})$, then given $j\ge 0$, there exist $u^{(j)}\in B$ and an integer $N_j$ such that $u_i^{(j)}=u^{(j)}$ for all $i\in I$ with $i\ge N_j$. 
The $D$-cable $\hat{u}:=(u^{(j)})$ so obtained is rooted at $s_1^{(0)}$ and is denoted by: }
\[
\hat{u}=\lim (\vec{s},\vec{m},\vec{c})
\]
\end{definition}
Note that, in this definition, we have $\hat{u}=\hat{u}_t$ in case $I=\{ 1,2,\ldots ,t\} $. 

\begin{example} {\rm Let $B$ be a commutative $k$-domain and $D\in {\rm LND}(B)$. Given non-zero $f\in\krn D$, let $\exp (fD):B\to B$ be the corresponding exponential automorphism of $B$. If $\hat{s}=(s_n)$ is a $D$-cable, then:
\[
D\exp (fD)(s_n)=\exp (fD)(s_{n-1}) \,\, {\rm for}\,\, n\ge 1
\]
Note that $\exp (fD)(s_0)=s_0$, and that $s_i\in DB$ if and only if $\exp (fD)(s_i)\in DB$. Therefore, $\exp (fD)(\hat{s}) := (\exp (fD)(s_n))$ defines a $D$-cable rooted at $s_0$. If $\hat{s}$ is infinite, then it is given by:}
\[
\textstyle\exp (fD)(\hat{s}) = \lim (\vec{s},\vec{m},\vec{c}) \,\, ,\,\, {\rm where}\,\, \vec{s}=(\hat{s},\hat{s},\hat{s},...)\,\, ,
\,\, \vec{m}=(1,2,3,...) \,\,  {\rm and}\,\,  \vec{c}=(f,\frac{1}{2!}f^2,\frac{1}{3!}f^3,...)
\]
\end{example}

\subsection{Quadratic $\Delta$-Cables}
Note that we can view the vector space $\Omega_1$ as being generated by the vertices of the $\Delta$-cable $\hat{x}=(x_n)$. Similarly, $\Omega_2$ admits a basis of homogeneous $\Delta$-cables.

\subsubsection{Monomial Basis}\label{monomial} Given $n\ge 0$, the {\it monomial basis} for $\Omega_{(2,n)}$ is:
\[
\{ x_0x_n,x_1x_{n-1},...,x_{\frac{n}{2}}^2\} \,\, (n\,\, {\rm even})\quad {\rm or}\quad \{ x_0x_n,x_1x_{n-1},...,x_{\frac{n-1}{2}}x_{\frac{n+1}{2}}\} \,\, (n\,\, {\rm odd})
\]
Therefore, $\dim\Omega_{(2,n)}$ equals $(n+2)/2$ if $n$ is even, or $(n+1)/2$ if $n$ is odd. 

\subsubsection{$\Delta$-Basis}\label{Delta-Basis} Given $n\in 2\N$, define $\theta_n^{(0)}\in\Omega_{(2,n)}\cap\krn\Delta$ by:
\[
\theta_n^{(0)} = \sum_{0\le i\le n}(-1)^ix_ix_{n-i} 
\]
Note that since $n$ is even, $\theta_n^{(0)}\ne 0$. Since $\Delta:\Omega_{(2,s+1)}\to\Omega_{(2,s)}$ is surjective for all $s\ge 0$, there exists a homogeneous $\Delta$-cable $\hat{\theta}_n=(\theta_n^{(j)})$ rooted at $\theta_n^{(0)}$.
By definition, we have $\theta_n^{(j)}\in\Omega_{(2,n+j)}$
for each $j\ge 0$. By {\it $\S$\,\ref{monomial}}, $\krn\Delta\cap\Omega_{(2,s)}$ equals $\{0\}$ if $s$ is odd, and equals $k\cdot\theta_s^{(0)}$ if $s$ is even
(c.f. Cor.3.3 of \cite{Freudenburg.13}).
Therefore,  $\Delta :\Omega_{(2,n+1)}\to\Omega_{(2,n)}$ is an isomorphism.  
It follows that, if $\hat{\theta}_n=(\theta_n^{(j)})$ is any homogeneous $\Delta$-cable rooted at $\theta_n^{(0)}$, then $\theta_n^{(1)}$ is uniquely determined. It is given by:
\[
\theta_n^{(1)}=\sum_{i=1}^{n+1}(-1)^{i+1}ix_ix_{n+1-i}
\]
\begin{lemma}\label{lemma1} Let $\{\hat{\theta}_n \, |\, n\in 2\N\}$ be a set of homogeneous $\Delta$-cables such that $\hat{\theta}_n=(\theta_n^{(j)})$ is rooted at $\theta_n^{(0)}$.
\begin{itemize}
\item [{\bf (a)}] Given $j\ge 0$, 
the set $\{ \theta_{2i}^{(j-2i)}\mid 0\le i\le j/2\} $ is a basis for $\Omega_{(2,j)}$. 
\smallskip
\item [{\bf (b)}] The vertices of $\hat{\theta}_n$ $(n\in 2\N )$ form a basis for $\Omega_2$. 
\end{itemize}
\end{lemma}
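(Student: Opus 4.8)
The plan is to prove (a) first and then deduce (b) by sorting the vertices of the cables according to their $\Z^2$-degree. Fix $j\ge 0$ and put $m=\lfloor j/2\rfloor$, so the proposed set is $T_j:=\{\theta_{2i}^{(j-2i)}\mid 0\le i\le m\}$. First I would check that $T_j\subset\Omega_{(2,j)}$: since $\theta_{2i}^{(0)}\in\Omega_{(2,2i)}$ and $\hat\theta_{2i}$ is a homogeneous $\Delta$-cable with $\deg\Delta=(0,-1)$, its vertex $\theta_{2i}^{(j-2i)}$ lies in $\Omega_{(2,2i+(j-2i))}=\Omega_{(2,j)}$. By the monomial‑basis count of \S\ref{monomial} one has $\dim_k\Omega_{(2,j)}=m+1$, which is exactly the number of elements listed in $T_j$. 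Hence it suffices to prove that $T_j$ is linearly independent.

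For independence I would use $\Delta$ itself as a separating device, exploiting that $\theta_{2i}^{(j-2i)}$ sits at $\Delta$-height exactly $j-2i$ inside its cable: by completeness and linearity of $\hat\theta_{2i}$ we have $\Delta^{\ell}\theta_{2i}^{(j-2i)}=\theta_{2i}^{(j-2i-\ell)}$ for $0\le\ell\le j-2i$ and $\Delta^{\ell}\theta_{2i}^{(j-2i)}=0$ for $\ell>j-2i$, and the root $\theta_{2i}^{(0)}$ is nonzero because $2i$ is even. Now suppose $\sum_{i=0}^{m}c_i\,\theta_{2i}^{(j-2i)}=0$ with $c_i\in k$, and argue by induction on $i_0=0,1,\dots,m$ that $c_{i_0}=0$. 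Assuming $c_0=\cdots=c_{i_0-1}=0$, apply $\Delta^{\,j-2i_0}$ to the relation: for $i>i_0$ the exponent $j-2i_0$ strictly exceeds the height $j-2i$, so those terms vanish, while the $i=i_0$ term becomes $c_{i_0}\theta_{2i_0}^{(0)}$; hence $c_{i_0}\theta_{2i_0}^{(0)}=0$ and so $c_{i_0}=0$. This proves (a).

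For (b), observe that $\theta_n^{(j)}$ has $\Z^2$-degree $(2,n+j)$, so the vertices of the cables $\hat\theta_n$ $(n\in 2\N)$ lying in a fixed graded piece $\Omega_{(2,\ell)}$ are precisely those $\theta_{2i}^{(\ell-2i)}$ with $2i\le\ell$, i.e., exactly the set $T_\ell$ shown in (a) to be a basis of $\Omega_{(2,\ell)}$. These sets are pairwise disjoint (distinct cables share no vertices, and vertices of one cable have distinct degrees), and since $\Omega_2=\bigoplus_{\ell\ge 0}\Omega_{(2,\ell)}$, the union of the bases $T_\ell$ over all $\ell\ge 0$ is a basis of $\Omega_2$, which is the assertion of (b).

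The argument is short, and the only points requiring attention are purely formal: that the listed vertices all land in the same graded piece, that the dimension count from \S\ref{monomial} matches the cardinality of $T_j$, and — for the elimination to go through in a single sweep — that the heights $j-2i$ are pairwise distinct, so that one power of $\Delta$ isolates each coefficient. I do not anticipate a genuine obstacle.
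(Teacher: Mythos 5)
Your proof is correct, but for part (a) it takes a different route from the paper's. The paper argues by induction on $j$: assuming $\{\theta_{2i}^{(j-1-2i)}\}$ is a basis of $\Omega_{(2,j-1)}$, it uses that $\Delta:\Omega_{(2,j)}\to\Omega_{(2,j-1)}$ is an isomorphism when $j$ is odd and is surjective with one-dimensional kernel $k\cdot\theta_j^{(0)}$ when $j$ is even (facts recorded in {\it \S\ref{Delta-Basis}}), so that spanning and independence are obtained together from rank--nullity. You instead match the cardinality of $T_j$ against the monomial-basis dimension count of {\it \S\ref{monomial}} and then prove linear independence directly by a height-filtration argument: applying $\Delta^{j-2i_0}$ kills every term of height strictly less than $j-2i_0$ and sends the $i_0$-th term to $c_{i_0}\theta_{2i_0}^{(0)}\ne 0$, so sweeping $i_0=0,1,\dots,\lfloor j/2\rfloor$ eliminates the coefficients one at a time. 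Your version is slightly more self-contained in that it only needs $\theta_{2i}^{(0)}\ne 0$ and the defining property $\Delta\theta_n^{(j)}=\theta_n^{(j-1)}$, not the explicit identification of $\krn\Delta\cap\Omega_{(2,s)}$; the paper's induction is shorter on the page and reuses machinery it has already set up and will need again. Your deduction of (b) from (a) by decomposing $\Omega_2=\bigoplus_{\ell}\Omega_{(2,\ell)}$ is exactly what the paper intends by ``immediate consequence,'' and the independence established in (a) already forces the listed vertices in each graded piece to be distinct, so your side remark about disjointness is not needed as a separate hypothesis.
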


\begin{proof}  To prove part (a), we proceed by induction on $j\ge 0$. We have that:
\[
\Omega_{(2,0)}=\langle x_0^2\rangle = \langle \theta_0^{(0)} \rangle
\]
So the statement of part (a) holds if $j=0$. 

Assume that, for $j\ge 1$, the set 
$\{ \theta_{2i}^{(j-1-2i)}\mid 0\le i\le (j-1)/2\}$
forms a basis for $\Omega_{(2,j-1)}$. If $j$ is odd, then $\Delta :\Omega_{(2,j)}\to\Omega_{(2,j-1)}$ is an isomorphism, and the set
$\{ \theta_{2i}^{(j-2i)}\mid 0\le i\le j/2\}$
is a basis for $\Omega_{(2,j)}$. If $j$ is even, then the kernel of $\Delta :\Omega_{(2,j)}\to\Omega_{(2,j-1)}$ is $k\cdot\theta_j^{(0)}$, and again we conclude that $\{ \theta_{2i}^{(j-2i)}\mid 0\le i\le j/2\}$ is a basis for $\Omega_{(2,j)}$. This proves part (a).

Part (b) is an immediate consequence of part (a).
\end{proof}

\begin{definition} {\rm Any set of $\Delta$-cables
$\{\hat{\theta}_n\}$ of the type described in {\it Lem.~\ref{lemma1}(b)} is a {\it $\Delta$-basis} for $\Omega_2$.}
\end{definition}

\subsubsection{Balanced $\Delta$-Basis} We define a particular $\Delta$-basis for $\Omega_2$ using binomial coefficients $\binom{i}{j}$. 
Given $n\in 2\N$ and $j\in\N$, define $\beta_n^{(j)}\in\Omega_{(2,n+j)}$ by:
\[
\beta_n^{(j)}=\sum_{i=j}^{n+j}(-1)^{j+i}\binom{i}{j}x_ix_{n+j-i}
\]
Note that $\beta_n^{(0)}=\theta_n^{(0)}$. 
\begin{lemma} If $n\in 2\N$ and $j\ge 1$, then $\Delta\beta_n^{(j)}=\beta_n^{(j-1)}$.
\end{lemma}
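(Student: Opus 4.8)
The plan is a direct computation: apply $\Delta$ to the defining sum for $\beta_n^{(j)}$ using that $\Delta$ is a derivation with $\Delta x_i=x_{i-1}$ $(i\ge 1)$ and $\Delta x_0=0$, then reindex and match the outcome term-by-term against the defining sum for $\beta_n^{(j-1)}$. Since $j\ge 1$, every index $i$ in $\beta_n^{(j)}=\sum_{i=j}^{n+j}(-1)^{i+j}\binom{i}{j}x_ix_{n+j-i}$ satisfies $i\ge 1$, so Leibniz gives $\Delta(x_ix_{n+j-i})=x_{i-1}x_{n+j-i}+x_ix_{n+j-i-1}$, where a factor with negative subscript is read as $0$; this convention is invoked only in the top term $i=n+j$, where $x_{n+j-i}=x_0$ forces the second summand to vanish.

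First I would split $\Delta\beta_n^{(j)}$ into two sums: $S_1$ collecting the $x_{i-1}x_{n+j-i}$ pieces and $S_2$ collecting the $x_ix_{n+j-i-1}$ pieces. Reindexing $S_1$ by $\ell=i-1$ and $S_2$ by $\ell=i$, each becomes a sum over monomials $x_\ell x_{n+j-1-\ell}$: $S_1$ runs over $j-1\le\ell\le n+j-1$ with coefficient $(-1)^{\ell+j+1}\binom{\ell+1}{j}$, while $S_2$ runs over $j\le\ell\le n+j-1$ (the endpoint $\ell=n+j$ having dropped out) with coefficient $(-1)^{\ell+j}\binom{\ell}{j}$. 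For $j\le\ell\le n+j-1$ the two contributions add, and Pascal's identity $\binom{\ell+1}{j}=\binom{\ell}{j}+\binom{\ell}{j-1}$ collapses their sum to $(-1)^{\ell+j}\bigl(\binom{\ell}{j}-\binom{\ell+1}{j}\bigr)=(-1)^{\ell+(j-1)}\binom{\ell}{j-1}$, which is precisely the coefficient of $x_\ell x_{n+(j-1)-\ell}$ in $\beta_n^{(j-1)}$. The one leftover monomial, $\ell=j-1$, occurs only in $S_1$ and equals $(-1)^{2j}\binom{j}{j}x_{j-1}x_n=x_{j-1}x_n$, matching the $i=j-1$ term of $\beta_n^{(j-1)}$. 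Comparing the index sets shows the monomials and coefficients coincide, giving $\Delta\beta_n^{(j)}=\beta_n^{(j-1)}$.

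I do not expect a genuine obstacle; the only delicate point is the bookkeeping of index ranges and signs, in particular checking that the boundary behavior is consistent — that the $x_0$-endpoint kills exactly one summand (because $\Delta x_0=0$), that no $x_{-1}$ ever appears (because $j\ge 1$), and that the range $j-1\le i\le n+j-1$ of $\beta_n^{(j-1)}$ matches the combined range of $S_1$ and $S_2$, with the correct sign in Pascal's identity. (The hypothesis $n\in 2\N$ plays no role here; the formula and proof hold for all $n\ge 0$.) If one wishes to avoid the case analysis, the identity also follows from generating functions: with $f(T)=\sum_{i\ge 0}x_iT^i$ one checks $\sum_{n\ge 0}\beta_n^{(j)}T^n=\frac{1}{j!}f(T)f^{(j)}(-T)$, and, since $\Delta$ acts on $f(T)f(S)$ by multiplication by $T+S$, a short computation using Pascal's identity gives $\Delta\bigl(\frac{1}{j!}f(T)f^{(j)}(-S)\bigr)=(T-S)\frac{1}{j!}f(T)f^{(j)}(-S)+\frac{1}{(j-1)!}f(T)f^{(j-1)}(-S)$; specializing $S=T$ (legitimate, as $\Delta$ acts only on the $x_i$) yields $\Delta\bigl(\frac{1}{j!}f(T)f^{(j)}(-T)\bigr)=\frac{1}{(j-1)!}f(T)f^{(j-1)}(-T)$, and reading off the coefficient of $T^n$ gives the lemma.
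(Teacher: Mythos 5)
Your proof is correct and follows essentially the same route as the paper's: apply $\Delta$ via the Leibniz rule to each monomial $x_ix_{n+j-i}$ and collapse the two resulting sums with Pascal's identity $\binom{\ell+1}{j}=\binom{\ell}{j}+\binom{\ell}{j-1}$. The only difference is bookkeeping — the paper extends the sum to start at $i=0$ by declaring $\binom{i}{j}=0$ for $i<j$, which makes the index range uniform and absorbs the boundary terms you handle explicitly; your generating-function variant is a pleasant extra but not needed.
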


\begin{proof}  If $n\ge 1$ and $c_0,...,c_n\in k$, then:
\begin{equation}\label{derive}
\Delta\sum_{i=0}^nc_ix_ix_{n-i}=\sum_{i=0}^{n-1}(c_{i+1}+c_i)x_ix_{n-1-i}
\end{equation}
Given $i\in\N$ with $0\le i\le j$, we extend the definition of binomial coefficient by setting $\binom{i}{j}=0$. 
Then for all $i,j\in\N$ we have:
\[
\binom{i}{j}+\binom{i}{j-1}=\binom{i+1}{j}
\]
In addition, we can write:
\[
\beta_n^{(j)}=\sum_{i=0}^{n+j}(-1)^{j+i}\binom{i}{j}x_ix_{n+j-i}
\]
By equation (\ref{derive}) we have:
\begin{eqnarray*}
\Delta\beta_n^{(j)} &=& \sum_{i=0}^{n+j-1}\left( (-1)^{j+i+1}\binom{i+1}{j} + (-1)^{j+i}\binom{i}{j}\right) x_ix_{n+j-1-i} \\
&=& \sum_{i=0}^{n+j-1}(-1)^{j+i+1}\left( \binom{i+1}{j} - \binom{i}{j}\right) x_ix_{n+j-1-i} \\
&=& \sum_{i=0}^{n+j-1}(-1)^{j-1+i}\binom{i}{j-1} x_ix_{n+j-1-i} \\
&=& \beta_n^{(j-1)}
\end{eqnarray*}
\end{proof} 

We thus see that $\hat{\beta}_n = (\beta_n^{(j)})$ defines a homogeneous $\Delta$-cable rooted at $\theta_n^{(0)}$, and that $\{\hat{\beta}_n\}$ is a $\Delta$-basis for $\Omega_2$, which we call the {\it balanced $\Delta$-basis}.

Note that each $\beta_n^{(j)}$ involves at most $n+1$ monomials. Moreover, the monomials $x_ix_{n+j-i}$ ($j\le i\le n+j$) are linearly independent if $j\ge n$, meaning that
$\beta_n^{(j)}$ involves exactly $n+1$ monomials when $j\ge n$.

\subsubsection{Small $\Delta$-Basis} Given $n\in 2\N$ and $j\in\N$, let
\[
W_n^{(j)}=\langle x_0x_{n+j},x_1x_{n+j-1},...,x_{\frac{n}{2}}x_{\frac{n}{2}+j}\rangle
\]
noting that $W_n^{(j)}\subset\Omega_{(2,n+j)}$ 
and $\dim W_n^{(j)}=n/2+1$ for all $j\ge 0$. 
Then $\Delta :W_n^{(j+1)}\to W_n^{(j)}$ is an isomorphism, 
since $\theta _{n+j+1}^{(0)}\not\in W_n^{(j+1)}$ if $j$ is odd. 
Since $\theta_n^{(0)}\in W_n^{(0)}$, we conclude that there exists a unique $\Delta$-cable $\hat{\eta}_n=(\eta_n^{(j)})$ rooted at $\theta_n^{(0)}$ such that $\eta_n^{(j)}\in W_n^{(j)}$ for each $j\ge 0$. 
We call $\{ \hat{\eta}_n\}$ the {\it small $\Delta$-basis} for $\Omega_2$. Note that each $\eta_n^{(j)}$ involves at most $\frac{n}{2}+1$ monomials.

It is easy to check that the first three cables in this basis are given by:
\[
\textstyle
\eta_0^{(j)}=x_0x_j \,\, ,\,\, \eta_2^{(j)}=(j+2)x_0x_{2+j}-x_1x_{1+j}\,\, , \,\, \eta_4^{(j)}=\frac{(j+1)(j+4)}{2}x_0x_{4+j}-(j+2)x_1x_{3+j}+x_2x_{2+j}
\]
In particular, $\hat{\eta}_4$ will be used to give certain 3-term recursion relations; see {\it Remark~\ref{3-term}}.

\subsubsection{ $Q$-Ideals}

\begin{definition}\label{def:Q-ideal} {\rm Let $\{\hat{\theta}_n\}$ be a $\Delta$-basis for $\Omega_2$. }
\begin{itemize}
\item [(1)] {\rm A {\it $Q$-ideal} is an ideal of $\Omega$ generated by $\{ \hat{\theta}_n\, |\, n\in S\}$, where $S\subset 2\N$ is any non-empty subset.}
\item [(2)] {\rm Given $n\in 2\N$, the corresponding {\it fundamental $Q$-ideal} is: }
\[
\mathcal{Q}_n=(\hat{\theta}_n,\hat{\theta}_{n+2},\hat{\theta}_{n+4},...)
\]
\end{itemize}
\end{definition}
Note that $\Delta\hat{\theta}_n=\hat{\theta}_n$ for each $n\in 2\N$. Therefore, if $I\subset\Omega$ is a $Q$-ideal, then $\Delta I=I$ by the surjectivity of $\Delta :\Omega _+\to \Omega _+$. 
\begin{lemma}\label{Q-ideal} The following properties hold. 
\begin{itemize}
\item [{\bf (a)}] $\mathcal{Q}_0\supset\mathcal{Q}_2\supset\mathcal{Q}_4\supset\cdots$ 
\smallskip
\item [{\bf (b)}] Given $n\in 2\N$, $\mathcal{Q}_n$ is independent of the choice of $\Delta$-basis. 
\smallskip
\item [{\bf (c)}] $\Omega_r\subset (x_0,...,x_{\frac{n}{2}-1})^{r-1}+\mathcal{Q}_n$ for each integer $r\ge 1$ and $n\in 2\N$.
\end{itemize}
\end{lemma}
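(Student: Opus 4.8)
The plan is to handle the three parts in order, with (b) carrying essentially all of the content. Part (a) I would dispose of at once: once a $\Delta$-basis is fixed, the generators of $\mathcal{Q}_{n+2}$ — the vertices of the cables $\hat{\theta}_m$ with $m\ge n+2$ even — form a subset of the generators of $\mathcal{Q}_n$, so $\mathcal{Q}_{n+2}\subseteq\mathcal{Q}_n$, and the chain $\mathcal{Q}_0\supseteq\mathcal{Q}_2\supseteq\cdots$ follows.

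For part (b) the idea is to describe $\mathcal{Q}_n$ intrinsically, with no reference to the chosen basis. Fix a $\Delta$-basis $\{\hat{\theta}_m\}$ and let $W_n\subseteq\Omega_2$ be the $k$-span of all cable vertices $\theta_m^{(j)}$ with $m\ge n$ even and $j\ge 0$, so that $\mathcal{Q}_n=\Omega\cdot W_n$. Being spanned by homogeneous elements, $W_n$ is bigraded, with bidegree-$(2,s)$ component spanned by $\{\theta_m^{(s-m)}\mid n\le m\le s,\ m\in 2\N\}$, which by {\it Lem.~\ref{lemma1}(a)} is part of a basis of $\Omega_{(2,s)}$ (and is $0$ for $s<n$). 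I would then show that for $s\ge n$,
\[
(W_n)_{(2,s)}=\krn\!\left(\Delta^{\,s-n+1}\colon\Omega_{(2,s)}\to\Omega_{(2,n-1)}\right):
\]
indeed $\Delta^{\,s-n+1}$ carries $\theta_m^{(s-m)}$ to $\theta_m^{(n-1-m)}$, which vanishes exactly when $m\ge n$, while for $m\le n-2$ the images $\theta_m^{(n-1-m)}$ are distinct members of the basis of $\Omega_{(2,n-1)}$ supplied by {\it Lem.~\ref{lemma1}(a)}, hence linearly independent; since $\{\theta_m^{(s-m)}\}_{m\ge n}\cup\{\theta_m^{(s-m)}\}_{m\le n-2}$ is a basis of $\Omega_{(2,s)}$, the indicated kernel is exactly the span of the first family. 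The right-hand side depends only on $\Delta$, so $W_n$, and hence $\mathcal{Q}_n=\Omega\cdot W_n$, is independent of the $\Delta$-basis. The one delicate point is precisely this identification of the graded pieces of $W_n$ with kernels of powers of $\Delta$, and it rests on {\it Lem.~\ref{lemma1}(a)} (that $\Delta$ is as injective as possible on $\Omega_2$).

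For part (c), fix $n\in 2\N$, put $p=n/2$ and $\mathfrak{m}=(x_0,\dots,x_{p-1})$, and use part (b) to compute $\mathcal{Q}_n$ with the small $\Delta$-basis $\{\hat{\eta}_m\}$, whose vertices satisfy $\eta_m^{(j)}\in W_m^{(j)}=\langle x_0x_{m+j},\dots,x_{m/2}x_{m/2+j}\rangle$. First I would establish the quadratic inclusion: for every $s\ge 0$,
\[
\Omega_{(2,s)}\subseteq\bigl\langle x_ax_b\ \bigm|\ a+b=s,\ \min(a,b)\le p-1\bigr\rangle+\mathcal{Q}_n .
\]
For $s<n$ this is automatic, since then $\min(a,b)\le s/2<p$ for every monomial $x_ax_b$ of degree $s$; for $s\ge n$ it follows by expanding an element of $\Omega_{(2,s)}$ in the basis $\{\eta_m^{(s-m)}\mid 0\le m\le s,\ m\in 2\N\}$ given by {\it Lem.~\ref{lemma1}(a)}, discarding the summands with $m\ge n$ (which lie in $\mathcal{Q}_n$), and observing that each surviving $\eta_m^{(s-m)}$ lies in $W_m^{(s-m)}$, all of whose monomials $x_ax_{s-a}$ satisfy $a\le m/2\le p-1$. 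Calling an index $i$ \emph{bad} when $i\ge p$, I would then finish by induction on the number of bad indices of a degree-$r$ monomial $\mu=x_{i_1}\cdots x_{i_r}$: if at most one index is bad, then at least $r-1$ of the factors lie in $\mathfrak{m}$ and $\mu\in\mathfrak{m}^{r-1}$; if two indices $i,j$ are bad, write $\mu=\nu\,x_ix_j$ with $\nu\in\Omega_{r-2}$ and apply the quadratic inclusion to $x_ix_j$ (its degree $i+j$ being $\ge 2p=n$), so that modulo $\mathcal{Q}_n$ the factor $x_ix_j$ becomes a combination of monomials $x_ax_b$ with $\min(a,b)\le p-1$ and, since $a+b\ge 2p$, $\max(a,b)\ge p$ — one good and one bad factor — whence $\mu$, modulo $\mathcal{Q}_n$, becomes a combination of degree-$r$ monomials with strictly fewer bad indices. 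This gives $\Omega_r\subseteq\mathfrak{m}^{r-1}+\mathcal{Q}_n$ (with $r=1$ vacuous, as $\mathfrak{m}^{0}=\Omega$). The only substantive step is the quadratic inclusion — essentially the bookkeeping of the small-basis supports $W_m^{(j)}$ — after which the passage to general $r$ is a routine descent on the number of bad indices.
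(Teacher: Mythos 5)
Your proof is correct, and all three parts rest on the same two ingredients the paper uses, namely {\it Lem.~\ref{lemma1}(a)} and the support property of the small $\Delta$-basis; but the organization of (b) and (c) is genuinely different. For (b), the paper expands $\mu_n^{(j)}$ in the given $\theta$-basis and kills the unwanted coefficients by comparing $\deg_\Delta$ of the summands, proving one inclusion of ideals at a time; you instead characterize the span $W_n$ of the generators intrinsically, identifying $(W_n)_{(2,s)}$ with $\krn\bigl(\Delta^{s-n+1}\colon\Omega_{(2,s)}\to\Omega_{(2,n-1)}\bigr)$, which yields basis-independence of $\mathcal{Q}_n=\Omega\cdot W_n$ in one stroke. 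The two arguments are equivalent in content (both amount to the observation that $\Delta$ is as injective as possible on $\Omega_2$), but your formulation is a clean intrinsic description the paper never states explicitly, and it could be reused elsewhere. For (c), the paper inducts on $r$: it writes $\xi\in\Omega_{r-2}\cdot\Omega_2$, expands the quadratic factors in the small basis, peels off one power of $\mathfrak{m}=(x_0,\dots,x_{\frac{n}{2}-1})$, and recurses on the coefficients in $\Omega_{r-1}$. You instead fix $r$, work monomial by monomial, and descend on the number of indices $\ge n/2$, using the quadratic inclusion to trade a product of two ``bad'' variables for combinations with one bad and one good factor. Both reduce to the same quadratic fact about the supports $W_m^{(j)}$; your descent avoids nesting the ideal $\mathcal{Q}_n$ inside the inductive hypothesis at every degree, at the cost of a slightly longer combinatorial setup. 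I see no gaps: the termination of your descent, the case $s<n$ of the quadratic inclusion, and the vanishing convention $\theta_m^{(j)}=0$ for $j<0$ are all handled correctly.
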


\begin{proof} Part (a) is clear from the definition. 

For part (b), let $\{\hat{\theta}_m\}$ be the given $\Delta$-basis, and let $\{\hat{\mu}_m\}$ be any other $\Delta$-basis for $\Omega_2$. For each $n\in 2\N$, define $Q$-ideals: 
\[
\mathcal{Q}_n=(\hat{\theta}_n,\hat{\theta}_{n+2},\hat{\theta}_{n+4},...) \quad \text{and}\quad 
\tilde{\mathcal{Q}}_n=(\hat{\mu}_n,\hat{\mu}_{n+2},\hat{\mu}_{n+4},...) 
\]
By part (a), it suffices to check $\mu _n^{(j)}\in \mathcal{Q}_n$ for each integer $j\ge 0$. By {\it Lem.~\ref{lemma1}(a)}, there exist $c_i\in k$, $0\le i\le (n+j)/2$,  such that:
\[
\mu_n^{(j)} = \sum _{0\le i\le (n+j)/2}c_i\,\theta_{2i}^{(n+j-2i)}
\]
Since $\deg_{\Delta}\mu_n^{(j)}=j$ and $\deg_{\Delta}\theta_{2i}^{(n+j-2i)}=n+j-2i$, 
and since the integers $n+j-2i$ are distinct for distinct $i$, 
it follows that $c_i=0$ when $n+j-2i > j$, i.e., when $n>2i$. 
Thus, we obtain:
\[
\mu_n^{(j)} = \sum _{n/2\le i\le (n+j)/2}c_i\,\theta_{2i}^{(n+j-2i)} \in\mathcal{Q}_n
\]
This proves part (b). 

We prove part (c) by induction on $r$, where the case $r=1$ is clear. 
Fix $n\in 2\N$ and the integer $r\ge 2$, and let $\xi\in\Omega_r$ be given. 
Observe that $\xi $ may be written as a sum of elements of $\Omega _{(r-2)}\cdot \Omega _2$. 
Since the vertices of the small $\Delta $-basis 
$\{\hat{\eta}_m\}$ form a $k$-basis for $\Omega _2$ 
by {\it Lem.~\ref{lemma1}(b)}, 
we may write 
\[
\xi = \sum_{i\ge 0}\sum_{j\ge 0}L_{(2i,j)}\eta_{2i}^{(j)}
=\sum_{i=0}^{n/2-1}\sum_{j\ge 0}L_{(2i,j)}\eta_{2i}^{(j)}
+\sum_{i\ge n/2}\sum_{j\ge 0}L_{(2i,j)}\eta_{2i}^{(j)}
\]
where $L_{(2i,j)}\in\Omega_{r-2}$. 
If $0\le i<n/2$, then 
$\eta_{2i}^{(j)}\in W_{2i}^{(j)}
\subset 
\Omega _1x_0+\cdots +\Omega _1x_{\frac{n}{2}-1}$. 
Also, by part (b) we have:
\[
\mathcal{Q}_n=(\hat{\eta}_n,\hat{\eta}_{n+2},\hat{\eta}_{n+4},...)
\]
Together, these imply 
$\xi =\xi _0x_0+\cdots +\xi _{\frac{n}{2}-1}x_{\frac{n}{2}-1}+\xi '$ 
for some $\xi _0,\ldots ,\xi _{\frac{n}{2}-1}\in \Omega _{r-1}$ 
and $\xi '\in \mathcal{Q}_n$. 
By the induction hypothesis, 
we have 
$\xi _0,\ldots ,\xi _{\frac{n}{2}-1}\in 
(x_0,...,x_{\frac{n}{2}-1})^{r-2}+\mathcal{Q}_n$. 
Therefore, 
$\xi $ belongs to 
$(x_0,...,x_{\frac{n}{2}-1})^{r-1}+\mathcal{Q}_n$. 
\end{proof}

\subsection{Cable Algebras}

\begin{definition}{\rm Let $B$ be a commutative $k$-domain. 
\begin{itemize}
\item [{\bf (a)}] $B$ is a {\it cable algebra} if there exist non-zero $D\in {\rm LND}(B)$ and a finite number of $D$-cables $\hat{s}_1,...,\hat{s}_n$ such 
that $B=A[\hat{s}_1,...,\hat{s}_n]$, where $A=\krn D$. In this case, we say that the pair $(B,D)$ is a {\it cable pair}. 
\smallskip
\item [{\bf (b)}]  $B$ is a  {\it monogenetic} cable algebra if $B=A[\hat{s}]$ for some cable pair $(B,D)$ with $A=\krn D$ and some $D$-cable $\hat{s}$. 
\smallskip
\item [{\bf (c)}] $B$ is a {\it simple} cable algebra over $k$ if $B=k[\hat{s}]$ for some $D$-cable $\hat{s}$, where $D\in {\rm LND}(B)$ is non-zero. A simple cable algebra $B$ is of {\it terminal type} if $\hat{s}$ can be chosen to be a terminal $D$-cable.
\end{itemize}
}
\end{definition}
We remark that, if there exists non-zero $D\in {\rm LND}(B)$ for which $B$ is finitely generated as an algebra over $\krn D$, then $B$ is a cable algebra. 

\begin{example} {\rm Let $B$ be a commutative $k$-domain, $D\in {\rm LND}(B)$ and $A=\krn D$. If 
\[
S\subset B\setminus (A\cup DB) \quad {\rm and}\quad  |S|=n\ge 1
\]
then there exist terminal $D$-cables 
$\hat{s}_1,...,\hat{s}_n$ such that $A[S,D]=A[\hat{s}_1,...,\hat{s}_n]$. Let $D^{\prime}$ be the restriction of $D$ to $A[S,D]$. Then $D^{\prime}\ne 0$, $A[S,D]$ is a cable algebra, 
and $(A[S,D],D^{\prime})$ is a cable pair.}
\end{example}

\begin{example} {\rm Given $n\ge 1$, let $B_n=k[x_0,...,x_n]=k^{[n+1]}$ and let $D_n$ be the restriction of the down operator to $B_n$.
The classical covariant rings $A_n=\krn D_n$ are known to be finitely generated over $k$, but have been calculated only for $n\le 8$; see \cite{Freudenburg.13}. 
Since $\partial/\partial x_n$ commutes with $D_n$, $\partial/\partial x_n$ restricts to $A_n$. If we denote this restriction by $\delta_n$, then $\krn\delta_n = A_{n-1}$. Therefore, each $A_n$ is a cable algebra. In particular:

\medskip
$A_1=k[x_0]=k^{[1]}$; see {\it Lem.~\ref{polyrings}(a)} 

\medskip
$A_2=A_1[\hat{s}]$, where $\hat{s}$ is the $\delta_2$-cable of length one with terminal vertex $s_1=2x_0x_2-x_1^2$.

\medskip
$A_3=A_2[\hat{t}]$, where $\hat{t}$ is the $\delta_3$-cable of length 2 with terminal vertex 
\[
t_2=9x_0^2x_3^2-18x_0x_1x_2x_3+6x_1^3x_3+8x_0x_2^3-3x_1^2x_2^2 .
\]

$A_4=A_3[\hat{u},\hat{v}]$, where $\hat{u},\hat{v}$ are the $\delta_4$-cables of length one with terminal vertices
\[
u_1=2x_0x_4-2x_1x_3+x_2^2 \quad {\rm and} \quad v_1 = 12x_0x_2x_4-6x_1^2x_4-9x_0x_3^2+6x_1x_2x_3-2x_2^3 .
\]
The rings $A_2,A_3,A_4$ are calculated in \cite{Freudenburg.06}, Sect. 8.6. The rings $A_5,...,A_8$ are considerably more complicated, and it would be of interest to analyze their cable structures. }
\end{example}

\subsection{Simple Cable Algebras}

A natural goal is to classify the simple cable algebras of finite transcendence degree over $k$ according to transcendence degree. 
We start with the following observation. 

\begin{lemma}\label{polyrings} 
\begin{itemize}
\item [{\bf (a)}] $k^{[1]}$ is a simple cable algebra over $k$ of non-terminal type. 
\smallskip
\item [{\bf (b)}] For each $n\ge 2$, $k^{[n]}$ is a simple cable algebra over $k$ of terminal type.
\end{itemize}
\end{lemma}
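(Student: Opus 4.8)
The plan is to exhibit explicit locally nilpotent derivations and explicit cables realizing each polynomial ring as a simple cable algebra. For part (a), the prototype ring $S=k[x,xv,xv^2,\ldots]$ is \emph{not} what we want here; instead take $B=k^{[1]}=k[v]$ with $D=\tfrac{d}{dv}$. Then $D\in{\rm LND}(B)$ is non-zero, $\krn D=k$, and the sequence $\hat{s}=(s_n)$ with $s_n=\tfrac{1}{n!}v^n$ satisfies $s_0=1\in\krn D$ and $Ds_n=s_{n-1}$ for $n\ge 1$, so $\hat{s}$ is an infinite (hence non-terminal) $D$-cable. Since $v=s_1\in k[\hat{s}]$ we get $B=k[v]=k[\hat{s}]$, proving that $k^{[1]}$ is a simple cable algebra of non-terminal type. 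The only subtlety to note is that $\hat{s}$ is indeed a \emph{complete} linear subtree of ${\rm Tr}(B,D)$ rooted in $\krn D$: completeness holds because $DB=B$ here, so ${\rm Tr}(B,D)$ has no terminal vertices and every vertex $s_n$ has the predecessor $s_{n+1}$ inside $\hat{s}$.

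For part (b), fix $n\ge 2$ and take $B=k^{[n]}=k[x_1,\ldots,x_n]$. The idea is to use a nilpotent linear operator in ``one Jordan block'' form, exactly as in the Jordan-block discussion in the introduction. Define $D\in{\rm LND}(B)$ by
\[
D=x_1\frac{\partial}{\partial x_2}+x_2\frac{\partial}{\partial x_3}+\cdots+x_{n-1}\frac{\partial}{\partial x_n},
\]
so that $Dx_1=0$ and $Dx_j=x_{j-1}$ for $2\le j\le n$. This $D$ is non-zero (using $n\ge 2$), is locally nilpotent (it lowers the total degree in $x_2,\ldots,x_n$ filtered appropriately, or simply: $D$ is triangular with $D^{n}=0$ on the generators), and the sequence $\hat{s}=(s_j)_{0\le j\le n-1}$ with $s_j=x_{j+1}$ is a $D$-cable of length $n-1$ rooted at $s_0=x_1\in\krn D$. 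It is terminal because $x_n\notin DB$: indeed $DB\subset(x_1,\ldots,x_{n-1})B+x_1\cdot(\text{derivatives})$, and more cleanly, $D$ raises the $x_1$-adic order of the ``highest'' variable, so $x_n$ has no predecessor. Finally $B=k[x_1,\ldots,x_n]=k[s_0,\ldots,s_{n-1}]=k[\hat{s}]$, so $k^{[n]}$ is a simple cable algebra of terminal type.

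The routine verifications are: (i) local nilpotency of the two derivations (both are standard — $\tfrac{d}{dv}$ is classical, and $D$ in part (b) is the derivation attached to a single nilpotent Jordan block on $S(V)$ with $\dim V=n$, already observed in the introduction to be locally nilpotent); (ii) that the displayed sequences satisfy the cable axioms $Ds_j=s_{j-1}$, which is immediate; and (iii) that the cables are complete linear subtrees — linearity is clear since each $s_j$ has the single immediate predecessor $s_{j+1}$ in the list, and completeness reduces to checking that the only terminal vertex issue is at $x_n$ in case (b), which is handled above. The one place that warrants a sentence of care is verifying $x_n\notin DB$ in part (b); I would argue it via the grading $\deg x_j = j-1$ (so $\deg D=-1$), under which $x_n$ has degree $n-1$ while every element of $DB$ that is homogeneous of degree $n-1$ and linear in the $x_j$ must be a $k$-combination of $x_2,\ldots$ pulled back, and a direct check shows $x_n$ is not in the image on the degree-$(n-1)$ linear piece. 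No genuine obstacle arises; this lemma is a warm-up establishing the terminology, and the main content is simply choosing the right derivations.
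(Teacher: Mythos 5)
Your part (b) is correct and is essentially the paper's own argument: the single Jordan block derivation $Dx_j=x_{j-1}$, the observation that $DB\subset(x_1,\ldots,x_{n-1})$ so that $x_n\notin DB$, and hence the terminal cable $(x_1,\ldots,x_n)$ generating $k^{[n]}$. Your part (a), however, has a genuine gap in the ``non-terminal type'' claim. By the paper's definition, a simple cable algebra is of \emph{terminal type} if the generating cable \emph{can be chosen} to be terminal (for some choice of $D$); so ``non-terminal type'' is a universal statement: \emph{no} pair $(D,\hat{s})$ with $\hat{s}$ terminal and $B=k[\hat{s}]$ exists. Exhibiting one infinite cable that generates $k[v]$ proves only that $k^{[1]}$ is a simple cable algebra; it does not rule out that some other locally nilpotent derivation admits a terminal generating cable. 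This is not a pedantic point: the paper's Example~\ref{variable} shows that $k^{[2]}$, which \emph{is} of terminal type, is also generated by an infinite cable, so ``generated by an infinite cable'' does not imply ``non-terminal type.''

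The missing step is the one the paper supplies: every non-zero $D\in{\rm LND}(k[t])$ is of the form $c\,\frac{d}{dt}$ with $c\in k^*$, hence has a slice, hence $DB=B$ and ${\rm Tr}(B,D)$ has no terminal vertices at all. Consequently no terminal $D$-cable exists for any admissible $D$, and $k^{[1]}$ is of non-terminal type. With that sentence added, your part (a) is complete and coincides with the paper's proof. Your side remarks in part (b) about verifying $x_n\notin DB$ via the grading $\deg x_j=j-1$ are fine but more than is needed; the ideal containment $DB\subset(x_1,\ldots,x_{n-1})$ already settles it.
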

 
\begin{proof} 
Let $B=k[t]=k^{[1]}$ and let $d/dt$ denote the usual derivative. Define the sequence $t_n=\frac{1}{n!}t^n$. 
Then $\hat{t}=(t_n)$ is an infinite $d/dt$-cable and $B=k[\hat{t}]$. Therefore, $B=k^{[1]}$ is a simple cable algebra. In addition, any non-zero $D\in {\rm LND}(B)$ has a slice, so ${\rm Tr}(B,D)$ has no terminal vertices. Therefore, $B$ is of non-terminal type. This proves part (a). 

For part (b), let $B=k[x_1,...,x_n]=k^{[n]}$ and define $D$ by $Dx_{i+1}=x_i$ for $i\ge 2$ and $Dx_1=0$. Note that $x_n\not\in (DB)=(x_1,...,x_{n-1})$. Therefore, 
$\hat{x}=(x_i)$ is a terminal $D$-cable and $B=k[\hat{x}]$.
\end{proof}

Suppose that $B$ is a cable algebra with $\text{tr.deg}_kB=1$. Then $B=L^{[1]}$, where $L$ is an algebraic extension field of $k$; see \cite{Freudenburg.06}, Cor.1.24. Therefore, when $k$ is algebraically closed, $B$ is simple (over $k$) if and only if $B=k^{[1]}$. When $k$ is not algebraically closed, there are simple cable algebras over $k$ other than $k^{[1]}$. For example, consider the usual derivative $D=d/dx$ on the ring $B=\Q [\sqrt{2},x]=\Q[\sqrt{2}]^{[1]}$. We have that $\hat{s}=(\sqrt{2}\, x^n/n!)$ is  a $D$-cable and $B=\Q [\hat{s}]$, but $B\ne\Q^{[1]}$. 

For simple cable algebras of transcendence degree two, we give several illustrative examples. 

\begin{example}\label{variable} {\rm Let $B=k[x,v]=k^{[2]}$ and let $D=\partial /\partial v$. If $s_n=\frac{1}{n!}v^n$ for $n\ge 0$, then $\hat{s}=(s_n)$ is a $D$-cable rooted at 1. Let $\hat{t}=\hat{s}+_2x\hat{s}$ be given by $\hat{t}=(t_n)$. Then $B=k[\hat{t}]$, since $k[\hat{t}]$ contains $t_1=v$ and $t_2=x+\frac{1}{2}v^2$. This shows that a simple cable algebra of terminal type can also be generated by an infinite $D$-cable for some $D$. }
\end{example}

\begin{example} {\rm Continuing the notation of the preceding example, we see that 
the subring $k[x\hat{s}]$ of $k[x,v]$ is a simple cable algebra which is not finitely generated as a $k$-algebra, and therefore not of terminal type. More generally, 
let $D=\partial /\partial v$ and let $p_n(v)$ be any infinite sequence of polynomials in $k[x,v]$ with $Dp_n(v) = p_{n-1}(v)$ for $n\ge 1$ and 
$p_0(v)\in k[x]\setminus k$. 
Then $\hat{p}:=(p_n(v))$ is a $D$-cable and $k[\hat{p}]$ is a simple cable algebra of transcendence degree 2 over $k$. }
\end{example}

\begin{example} {\rm Let $B=k[y_0,y_1,y_2]$ where $2y_0y_2=y_1^2$. Define $D\in {\rm LND}(B)$ by $y_2\to y_1\to y_0\to 0$. 
It is easy to see that $y_2\not\in DB$. Therefore, $\hat{y}:=(y_n)$ is a terminal $D$-cable and $B=k[\hat{y}]$. }
\end{example}

\begin{example} {\rm The ring $B=k[z_0,z_1,z_2]$ where $2z_0^2z_2=z_1^2$ is not a simple cable algebra. In order to see this, let $D\in {\rm LND}(B)$ and a $D$-cable $\hat{s}=(s_n)$ be given. 
Define $E\in {\rm LND}(B)$ by $z_2\to z_1\to z_0^2\to 0$. It is known that ${\rm LND}(B)=k[z_0]\cdot E$ 
(see \cite{Makar-Limanov.01a}). Therefore, $DB\subset J=(z_0^2,z_1)$. Assume that $k[\hat{s}]=B$. 
If $s_n\in DB$ for every $n\ge 0$, then $B/J=k$. 
However, 
if $\pi :B\to B/J$ is the canonical surjection, 
then $\pi(z_2)$ is transcendental over $k$, 
so this case cannot occur. 
Therefore, $s_n\not\in DB$ for some $n\ge 0$, meaning that $s_n$ is a terminal vertex and $s_0,...,s_{n-1}\in J$. It follows that 
$B/J=k[\pi (s_n)]\cong k^{[1]}/(p)$ for some $p\in k^{[1]}\setminus k^*$. 
If $p=0$, then $B/J$ is an integral domain, 
a contradiction. 
If $p\ne 0$, then every element of $B/J$ is algebraic over $k$, 
a contradiction. 
Therefore, $k[\hat{s}]\ne B$.}
\end{example} 

\subsection{Cable Relations for $S$} Define the simple cable algebra $S\subset k[x,v]=k^{[2]}$ by $S=k[x\hat{s}]$, 
where $\hat{s}=(\frac{1}{n!}v^n)$. 
\begin{theorem}\label{S-relations} $S\cong_k\Omega/\mathcal{Q}_2$. Consequently, $\mathcal{Q}_2$ is a prime ideal of $\Omega$.
\end{theorem}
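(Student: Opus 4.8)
The plan is to realize $\mathcal{Q}_2$ as the kernel of the cable map $\phi:=\phi_{x\hat{s}}\colon\Omega\to k[x,v]$ determined by $\phi(x_i)=\tfrac{1}{i!}xv^i$; by definition its image is $S=k[x\hat{s}]$, so once I show $\krn\phi=\mathcal{Q}_2$ I obtain $\Omega/\mathcal{Q}_2\cong_kS$, and then $\mathcal{Q}_2$ is prime because $S$ is a subring of the domain $k[x,v]$. I will work with the $\Z^2$-grading on $\Omega$ with $\deg x_i=(1,i)$ and the grading on $k[x,v]$ with $\deg x=(1,0)$ and $\deg v=(0,1)$; then $\phi$ is homogeneous, $\mathcal{Q}_2$ is a bihomogeneous ideal (it is generated by the vertices $\theta_n^{(j)}\in\Omega_{(2,n+j)}$), and a homogeneous element of $\Omega$ of bidegree $(r,s)$ is sent to a scalar multiple of $x^rv^s$. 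I will also use the identity $\phi\Delta=\tfrac{\partial}{\partial v}\phi$ from item~(x) of \S\ref{D-cables}, valid since $x\hat{s}$ is a $\tfrac{\partial}{\partial v}$-cable rooted at $x$.

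First I would verify $\mathcal{Q}_2\subseteq\krn\phi$. A binomial computation gives $\phi(\theta_n^{(0)})=\tfrac{x^2v^n}{n!}\sum_{i=0}^n(-1)^i\binom{n}{i}=0$ for each even $n\ge 2$. To cover the rest of the cable $\hat{\theta}_n=(\theta_n^{(j)})$ I would induct on $j$: since $\theta_n^{(j)}\in\Omega_{(2,n+j)}$ we may write $\phi(\theta_n^{(j)})=c_jx^2v^{n+j}$, and then $\phi(\theta_n^{(j-1)})=\phi(\Delta\theta_n^{(j)})=\tfrac{\partial}{\partial v}\phi(\theta_n^{(j)})=(n+j)c_jx^2v^{n+j-1}$, which is $0$ by the inductive hypothesis, forcing $c_j=0$. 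Since the $\theta_n^{(j)}$ generate $\mathcal{Q}_2$, this gives $\mathcal{Q}_2\subseteq\krn\phi$ and hence an induced surjective graded $k$-algebra homomorphism $\bar{\phi}\colon\Omega/\mathcal{Q}_2\twoheadrightarrow S$.

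The main point is that $\bar{\phi}$ is injective, which I would establish by a dimension count in each bidegree $(r,s)$. The key input is {\it Lemma~\ref{Q-ideal}(c)} with $n=2$, namely $\Omega_r\subseteq(x_0)^{r-1}+\mathcal{Q}_2$ for $r\ge 1$: for $r\ge 2$ this shows $(\Omega/\mathcal{Q}_2)_{(r,s)}$ is spanned by the single class of $x_0^{r-1}x_s$, so it has dimension at most $1$, while for $r\le 1$ the ideal $\mathcal{Q}_2$ meets $\Omega_r$ trivially and $(\Omega/\mathcal{Q}_2)_r\cong\Omega_r$. On the $S$-side, $S_{(r,s)}=k\,x^rv^s$ for all $r\ge 1$ and $s\ge 0$ (since $x^rv^s=x^{r-1}\cdot xv^s\in S$), $S_{(0,0)}=k$, and $S_{(0,s)}=0$ for $s\ge 1$. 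Comparing, $\dim_k(\Omega/\mathcal{Q}_2)_{(r,s)}\le\dim_kS_{(r,s)}$ in every bidegree, so the graded surjection $\bar{\phi}$ is an isomorphism in each bidegree, hence an isomorphism. Therefore $\krn\phi=\mathcal{Q}_2$, and $\Omega/\mathcal{Q}_2\cong_kS$ is a domain, so $\mathcal{Q}_2$ is prime.

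I expect the injectivity step to be where the real content sits, but essentially all of that content has been packaged into {\it Lemma~\ref{Q-ideal}(c)}, which says precisely that $\mathcal{Q}_2$ collapses $\Omega$ onto powers of the single variable $x_0$ in each degree — exactly mirroring the fact that the graded pieces of $S$ are at most one-dimensional. The inclusion $\mathcal{Q}_2\subseteq\krn\phi$ is routine bookkeeping; the only subtlety there is keeping track of bidegrees so that the induction on $j$ goes through.
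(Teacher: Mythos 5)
Your proposal is correct and follows essentially the same route as the paper: both establish $\mathcal{Q}_2\subseteq\krn\phi_{x\hat{s}}$ by intertwining $\Delta$ with $\partial/\partial v$ and inducting along each cable $\hat{\theta}_n$, and both derive the reverse inclusion from {\it Lemma~\ref{Q-ideal}(c)} with $n=2$. The only difference is the final step, where the paper eliminates the residual term $x_0^{r-1}L$ using primality of $\krn\phi_{x\hat{s}}$, whereas you compare the graded dimensions of $(\Omega/\mathcal{Q}_2)_{(r,s)}$ and $S_{(r,s)}$; both finishes are valid.
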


\begin{proof} The surjections $\phi_{\hat{s}}:\Omega\to k[v]$ and $\phi_{x\hat{s}}:\Omega\to S$ are given by:
\[
 \phi_{\hat{s}}(x_i)=s_i \quad {\rm and}\quad  \phi_{x\hat{s}}(x_i)=xs_i \quad (i\ge 0)
 \]
Let $g\in\krn\Delta$ be given, and let $\{\hat{\theta}_n\}$ be a $\Delta$-basis for $\Omega_2$. If $d/dv$ denotes the standard derivative on $k[v]$, then we have:
\begin{equation}\label{ker-Delta1}
0=\phi_{\hat{s}}\Delta g = \frac{d}{dv}\phi_{\hat{s}}g \quad\Rightarrow\quad \phi_{\hat{s}} g\in\krn\frac{d}{dv} = k 
\quad\Rightarrow\quad g\in k+\krn\phi_{\hat{s}}
\end{equation}
If $n\ge 2$ is even, then $\phi_{\hat{s}}\theta_n^{(0)}=\lambda v^n$ for some $\lambda\in k$. 
Therefore, $\theta_n^{(0)}\in\krn\phi_{\hat{s}}$ for each even $n\ge 2$. 

Given an even integer $n\ge 2$, assume that $\theta_n^{(j)}\in\krn\phi_{\hat{s}}$ for some $j\ge 0$. We have:
\[
0=\phi_{\hat{s}}\theta_n^{(j)}=\phi_{\hat{s}}\Delta\theta_n^{(j+1)} = \frac{d}{dv}\phi_{\hat{s}}\theta_n^{(j+1)}
 \quad\Rightarrow\quad \phi_{\hat{s}} \theta_n^{(j+1)}\in\krn\frac{d}{dv} = k 
\]
As before, since $n\ge 2$, we must have $\phi_{\hat{s}} \theta_n^{(j+1)}=0$. It follows by induction that 
$\theta_n^{(j)}\in\krn\phi_{\hat{s}}$ for every even $n\ge 2$ and every $j\ge 0$. 
Therefore, $\mathcal{Q}_2\subset\krn\phi_{\hat{s}}$. 

Given $r\ge 2$ and $P\in\Omega_r$, note that $\phi_{x\hat{s}}P=x^r\phi_{\hat{s}}P$. Therefore, if $P\in\Omega$ is homogeneous, then $P\in\krn\phi_{x\hat{s}}$ if and only if 
$P\in\krn\phi_{\hat{s}}$. In particular, this implies $\mathcal{Q}_2\subset\krn\phi_{x\hat{s}}$. 

Suppose that $P\in\Omega_r\cap\krn\phi_{x\hat{s}}$. 
By {\it Lem.~\ref{Q-ideal}(c)}, we see that $P\in (x_0)^{r-1}+\mathcal{Q}_2$. 
Write $P=x_0^{r-1}L+Q$ for $L\in\Omega$ and $Q\in\mathcal{Q}_2$. Since the element $P$  and the ideals $(x_0)^{r-1}$ and $\mathcal{Q}_2$ are homogeneous, we may assume that $L$ and $Q$ are homogeneous. By degree considerations, $L\in\Omega_1$. We have that 
$x_0^{r-1}L\in\krn\phi_{x\hat{s}}$. If $L\ne 0$, then since $\krn\phi_{x\hat{s}}$ is a prime ideal, either $x_0\in\krn\phi_{x\hat{s}}$ or $L\in\krn\phi_{x\hat{s}}$, a contradiction. Therefore, $L=0$ and $P\in\mathcal{Q}_2$. 

We have thus shown $\Omega_r\cap\krn\phi_{x\hat{s}}\subset\mathcal{Q}_2$ for all $r\ge 2$. This suffices to prove 
$\krn\phi_{x\hat{s}}=\mathcal{Q}_2$.
\end{proof}


\section{The Derivation $D$ in Dimension Five}\label{Five}

\subsection{Definitions}
Define the polynomial ring $B=k[a,x,y,z,v]=k^{[5]}$. We define the locally nilpotent derivation $D$ of $B$ by its action on a set of generators:
\[
z\to y\to x\to a^3 \quad ,\quad v\to a^2\quad ,\quad a\to 0
\]
Define $A=\krn D$ and $R=k[a,x,y,z]$, 
noting that $D$ restricts to $R$. In fact, $D$ restricts to a linear derivation of the subring $k[a^3,x,y,z]$, and this kernel is well known. 
Let $k[t,x,y,z]=k^{[4]}$ and define the linear derivation $\tilde{D}$ on this ring by $z\to y\to x\to t\to 0$. Then $\krn\tilde{D}=k[t,\tilde{F},\tilde{G},\tilde{h}]$, where: 
\[
\tilde{F}=2ty-x^2 \,\, ,\,\, \tilde{G}=3t^2z-3txy+x^3\quad\text{and}\quad  t^2\tilde{h}=\tilde{F}^3+\tilde{G}^2
\]
See \cite{Freudenburg.06}, Example 8.9.
Note that the restriction of $D$ to $R$ is equal to 
the $k[a]$-derivation ${\rm id}_{k[a]}\otimes \tilde{D}$ 
on $k[a]\otimes _{k[t]}k[t,x,y,z]=R$, 
and its kernel $R\cap A$ is equal to 
$\ker ({\rm id}_{k[a]}\otimes \tilde{D})
=k[a]\otimes _{k[t]}\ker \tilde{D}$. 
Therefore, if $F=\tilde{F}\vert_{t=a^3}$, $G=\tilde{G}\vert_{t=a^3}$ and $h=\tilde{h}\vert_{t=a^3}$, then:
\[
R\cap A=k[a,F,G,h] \quad {\rm where}\quad a^6h=F^3+G^2
\]
Specifically:
\[
F=2a^3y-x^2\,\, ,\,\, G=3a^6z-3a^3xy+x^3\,\, ,\,\,  h=9a^6z^2-18a^3xyz+8a^3y^3+6x^3z-3x^2y^2
\]
Define a $\Z^2$-grading of $B$ by declaring that $a,x,y,z,v$ are homogeneous and: 
\[
\deg (a,x,y,z,v)=((1,0),(3,1),(3,2),(3,3),(2,1))
\]
Then $D$ is a homogeneous derivation of degree $(0,-1)$ and $A$ is a graded subring of $B$. 
Given integers $r,s\ge 0$, let $B_{(r,s)}$ be the vector space of homogeneous polynomials in $B$ of degree $(r,s)$, and define:
\[
A_{(r,s)}=A\cap B_{(r,s)}
\]
Then we have:
\[
F\in A_{(6,2)}\,\, ,\,\, G\in A_{(9,3)}\,\, ,\,\, h\in A_{(12,6)}
\]
Since $k[a,F,G,h]=R\cap A=\ker D|_R$ is factorially closed in $R$, we see that $F$, $G$ and $h$ are irreducible by degree considerations. 
Note that $[D,\partial /\partial v]=0$, that is, $D$ commutes with the partial derivative $\partial /\partial v$ on $B$. Therefore, $\partial /\partial v$ restricts to $A$. If $\partial$ denotes the restriction of $\partial /\partial v$ to $A$, then $\partial\in {\rm LND}(A)$ and $\partial$ is homogeneous of degree $(-2,-1)$. 

The following result is needed below. 

\begin{lemma}\label{mini} Given $n\ge 0$, write $n=6e+\ell$ for $e\ge 0$ and $0\le \ell\le 5$.
\begin{itemize}
\item [{\bf (a)}] 
\[
R\cap A_{(2n+1,n)}=\begin{cases} \langle ah^e\rangle & \ell =0 \\ \{ 0\} & \ell\ne 0\end{cases}
\]
\item [{\bf (b)}] 
\[
R\cap A_{(2n+2,n)} = \begin{cases} \langle a^2h^e\rangle & \ell =0\\ \langle Fh^e\rangle & \ell =2\\ \{ 0\} & \ell =1,3,4,5\end{cases}
\]
\end{itemize}
\end{lemma}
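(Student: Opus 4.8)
The plan is to leverage the explicit presentation $R\cap A=k[a,F,G,h]$ together with the single relation $a^6h=F^3+G^2$, turning the computation of each graded piece into an elementary count of monomials of a prescribed bidegree.

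The first step is to fix a convenient spanning set. Since $G^2=a^6h-F^3$ and $\deg G^2=(18,6)=\deg(a^6h)=\deg F^3$, any monomial in $a,F,G,h$ can be rewritten, without changing its bidegree, as a $k$-linear combination of the \emph{reduced} monomials $a^iF^jh^\ell$ and $a^iF^jGh^\ell$ with $i,j,\ell\in\N$ (each application of the relation lowers the $G$-degree by two, so the rewriting terminates). As $a,F,G,h$ are homogeneous, $R\cap A$ is a graded subring, so for every $(r,s)$ the space $R\cap A_{(r,s)}$ is spanned over $k$ by those reduced monomials whose bidegree is exactly $(r,s)$.

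The second step is the bidegree bookkeeping. From $\deg a=(1,0)$, $\deg F=(6,2)$, $\deg G=(9,3)$, $\deg h=(12,6)$ one has $\deg(a^iF^jh^\ell)=(i+6j+12\ell,\,2j+6\ell)$ and $\deg(a^iF^jGh^\ell)=(i+6j+12\ell+9,\,2j+6\ell+3)$. Imposing the second coordinate to be $n$ and the first to be $2n+1$ (part (a)) or $2n+2$ (part (b)) forces, in each family, a value of $i$ determined by $j$ and $\ell$: for the $G$-containing monomials this forced value of $i$ is always negative, so they never contribute; for the $G$-free monomials one gets $2j+6\ell=n$ together with $i=1-2j$ in case (a) and $i=2-2j$ in case (b), which pins down $j$ (hence $i$) and leaves $6\ell=n$ in case (a) and $6\ell=n$ or $6\ell=n-2$ in case (b). Tracking which residues of $n$ modulo $6$ admit a solution with $i,j,\ell\in\N$ yields exactly the stated spanning sets: $\{ah^e\}$ (when $n=6e$) or $\emptyset$ in (a), and $\{a^2h^e\}$ (when $n=6e$), $\{Fh^e\}$ (when $n=6e+2$), or $\emptyset$ (when $\ell\in\{1,3,4,5\}$) in (b).

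Finally, in each non-vanishing case the exhibited generator is nonzero because $B$ is a domain and $a,F,h$ are nonzero, so the component in question is precisely the claimed one-dimensional space, and in the remaining cases it is $\{0\}$. No deeper input is needed: in particular one does not need $a^6h-F^3-G^2$ to generate the entire ideal of relations among $a,F,G,h$, since only the spanning property of the reduced monomials is used, not their linear independence. The one step demanding care is the modular bookkeeping in the second part; it is cleanest to first dispose of the $G$-containing monomials (whose forced $i$ is negative in every target degree here) and then read off $j$ from $i=1-2j$ or $i=2-2j$, which is what produces the dichotomy in (a) and the four-way split in (b).
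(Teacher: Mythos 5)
Your proof is correct and follows essentially the same route as the paper: both span each bigraded piece by monomials in $a,F,G,h$ and solve the resulting bidegree equations, which pin down the exponents and force the $G$-exponent to vanish. Your preliminary reduction modulo $G^2=a^6h-F^3$ is harmless but unnecessary --- subtracting twice the second-degree equation from the first already gives $e_1+2e_2+3e_3=1$ (resp.\ $2$) for a monomial $a^{e_1}F^{e_2}G^{e_3}h^{e_4}$, which rules out any positive power of $G$ directly.
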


\begin{proof} Since $R\cap A= k[a,F,G,h]$ with $a,F,G,$ and $h$ homogeneous, each $k$-vector space $R\cap A_{(r,s)}$ is spanned by monomials in $a,F,G$ and $h$. 
If the monomial $a^{e_1}F^{e_2}G^{e_3}h^{e_4}\in R$ ($e_i\in\N$) has degree $(2n+1,n)$, then:
\[
\begin{cases} e_1+6e_2+9e_3+12e_4 &=2n+1\\ 2e_2+3e_3+6e_4 &=n \end{cases}
\]
The solutions to this system are $e_1=1$, $e_2=e_3=0$ and $6e_4=n$. This proves part (a). 

Similarly, if $\deg (a^{e_1}F^{e_2}G^{e_3}h^{e_4})=(2n+2,n)$, then:
\[
\begin{cases} e_1+6e_2+9e_3+12e_4 &= 2n+2\\ 2e_2+3e_3+6e_4 &= n\end{cases}
\]
The solutions to this system are:
\[
\{ e_1=2\,\, ,\,\, e_2=e_3=0\,\, ,\,\, n=6e_4\}\quad {\rm and}\quad 
\{ e_1=e_3=0\,\, ,\,\, e_2=1\,\, ,\,\, n=6e_4+2\}
\]
This proves part (b). 
\end{proof}

\subsection{Homogeneous $\partial$-Cables} Let $\mathcal{S}_a$ denote the set of infinite homogeneous $\partial$-cables rooted at $a$.

\begin{theorem}\label{cable} $\mathcal{S}_a\ne\emptyset$
\end{theorem}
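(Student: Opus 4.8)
The goal is to produce an infinite homogeneous $\partial$-cable rooted at $a$, i.e., a sequence $(s_n)$ with $s_0 = a$, $s_n \in A = \krn D$ homogeneous, $\partial s_n = s_{n-1}$, and with every $s_n \in \partial A$ (so that the cable is non-terminal). The plan is to build the cable one vertex at a time using the graded structure of $A$ together with the relationship between $D$, $\partial$, and the down operator $\Delta$. First I would fix the bidegree bookkeeping: since $\partial$ is homogeneous of degree $(-2,-1)$ and $a \in A_{(1,0)}$, any homogeneous $\partial$-cable rooted at $a$ must have $s_n \in A_{(2n+1,n)}$. So the real content is the surjectivity of $\partial$ onto the correct graded pieces: I need $a \in \partial^n A$ for every $n$, witnessed coherently.

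The key step is to exhibit the first few vertices explicitly and then run an induction. Using the "constructs" from Theorem~\ref{A-constructs}, the candidates $s_0 = a$, $s_1 = av - x$, $s_2 = \tfrac12(av^2 - 2xv + 2a^2y)$, $s_3 = \tfrac16(av^3 - 3xv^2 + 6a^2yv - 6a^4 z)$ are homogeneous of the right bidegrees and satisfy $D s_n = 0$, $\partial s_n = s_{n-1}$ — this is a direct check against the definition of $D$. For the inductive step I would argue that $\partial : A_{(2n+1,n)} \to A_{(2n-1,n-1)}$ is surjective for all $n \ge 1$. The mechanism: restrict attention to $A$-elements of the form (polynomial in $v$ with coefficients in $R = k[a,x,y,z]$), observe that $D$ and $\partial/\partial v$ on $B$ interact so that solving $D(\text{lift}) = 0$ order by order in $v$ is governed by the linear derivation $\tilde D$ on $k[a^3,x,y,z]$ whose kernel $R \cap A = k[a,F,G,h]$ is fully understood (this is exactly the setup in \S\ref{Five}). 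Concretely, given $s_{n-1}$, antidifferentiate in $v$ to get a preimage under $\partial/\partial v$ in $B$, then correct it by an element of $\krn(\partial/\partial v) = R$ to land back in $\krn D$; the correction term exists precisely because the relevant graded piece $R \cap A_{(2n+1,n)}$ is nonzero in the needed cases, and Lemma~\ref{mini}(a) pins down exactly what that piece is (namely $\langle a h^e\rangle$ when $6 \mid n$, and $\{0\}$ otherwise). One then checks the obstruction to lifting vanishes because $a = D$-image considerations force the leading $v$-coefficient of any preimage to be a constant multiple of $a$, which is consistent.

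Finally, to see the cable is \emph{infinite} rather than terminal, I would invoke remark (i) after the definition of $D$-cables: an infinite cable is one contained in $\partial A = DA$-image — equivalently, the construction never terminates, which is automatic since at each stage a preimage was produced. Alternatively, since $a \in I_\infty$ (the core ideal of $\partial$) and the construction produces $s_n$ with $\partial s_n = s_{n-1}$ for all $n$, the cable $(s_n)$ is by definition infinite. Homogeneity is preserved at each step because all the choices (antiderivative in $v$, correction by a monomial in $a,F,G,h$) can be taken homogeneous of the forced bidegree $(2n+1,n)$.

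The main obstacle I expect is the inductive lifting step: showing that the preimage under $\partial/\partial v$ of the homogeneous invariant $s_{n-1}$ can always be corrected \emph{within} $\krn D$, i.e., that no obstruction to remaining in $A$ appears. This is where the precise description $R \cap A = k[a,F,G,h]$ with relation $a^6 h = F^3 + G^2$, and the degree computation of Lemma~\ref{mini}, must be used carefully — the correction is needed exactly when $n$ is a multiple of $6$, and one must verify the obstruction class lies in the one-dimensional space $\langle a h^{n/6}\rangle$ and is hit. Everything else (bidegree tracking, the base cases, non-terminality) is routine once this is in place.
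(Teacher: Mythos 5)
Your overall strategy --- build the cable inductively by antidifferentiating $s_{n-1}$ with respect to $v$ inside $B$ and then correcting the result by an element of $R=k[a,x,y,z]$ so as to land in $\krn D$ --- is a viable alternative to the paper's route, but your justification of the one step that carries all the weight is wrong. If $P_0\in B_{(2n+1,n)}$ is a $v$-antiderivative of $s_{n-1}$, then (since $D$ commutes with $\partial/\partial v$) the failure of $P_0$ to lie in $A$ is measured by $DP_0\in R_{(2n+1,n-1)}$, and the correction you need is an $r\in R_{(2n+1,n)}$ with $Dr=DP_0$. So what must be proved is a \emph{surjectivity} (cokernel) statement: that $D:R_{(2n+1,n)}\to R_{(2n+1,n-1)}$ hits $DP_0$. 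You instead appeal to the nonvanishing of $R\cap A_{(2n+1,n)}$, i.e.\ to Lemma~\ref{mini}(a); but that space is the \emph{kernel} of $D$ on $R_{(2n+1,n)}$, which governs the non-uniqueness of the correction, not its existence --- and by your own citation it is $\{0\}$ unless $6\mid n$, so the mechanism as you state it would deny the existence of a correction for most $n$. The sentence about ``$a=D$-image considerations forcing the leading $v$-coefficient to be a multiple of $a$'' does not identify the actual obstruction class (which lives in the cokernel of $D$ on the graded piece of $R$) or explain why it vanishes.

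The missing ingredient is precisely what the paper imports from Proposition~4.1 of \cite{Freudenburg.13}: for the basic linear derivation $d$ on $Q=k[t,x,y,z]$ ($t=a^3$), the map $d:Q_{(r,s+1)}\to Q_{(r,s)}$ is surjective whenever $2s<3r$. Writing $R_{(2n+1,m)}=a^{e_0}Q_{((2n+1-e_0)/3,\,m)}$ with $e_0\in\{0,1,2\}$, $e_0\equiv 2n+1 \pmod 3$, the needed surjectivity reduces to $2(n-1)<2n+1-e_0$, which does hold; so your induction can be repaired, but only by invoking this graded surjectivity theorem, which nowhere appears in your argument. The paper itself organizes the same input differently: it first builds a sequence $w_n$ in $Q$ satisfying $dw_{3m+1}=tw_{3m}$, $dw_{3m+2}=w_{3m+1}$, $dw_{3m+3}=tw_{3m+2}$ (the factors of $t$ are exactly where the inequality $2s<3r$ would otherwise fail), checks that $(Dv)^j$ divides $D^j(aw_{3m})$, and then produces $s_{3m}=(-1)^{3m}\pi_v(aw_{3m})$ via the Dixmier map, obtaining the intermediate $s_{3m-1},s_{3m-2}$ by differentiation. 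Either packaging works, but without the surjectivity result for $d$ on the graded pieces of $Q$ there is no proof.
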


\begin{proof} We show that there exists a sequence $s_n\in A$, $n\ge 0$, such that:
\begin{itemize}
\item [(a)]  $s_0=a$
\smallskip
\item [(b)] $s_n\in A_{(2n+1,n)}$ for each $n\ge 0$
\smallskip
\item [(c)]  $\partial s_n=s_{n-1}$  for each $n\ge 1$
\end{itemize}

Let $d$ denote the restriction of $D$ to the subring $Q\subset B$ defined by $Q=k[t,x,y,z]\cong k^{[4]}$, where $t=a^3$. Then $d$ is a linear derivation defined by:
\[
z\to y\to x\to t\to 0
\]
In addition, $d$ is homogeneous of degree $(0,-1)$ for the $\Z^2$-grading of $Q$ for which:
\[
\deg (t,x,y,z)=((1,0), (1,1), (1,2),(1,3))
\]
Let $Q_{(r,s)}$ denote the vector space of homogeneous polynomials in $Q$ of degree $(r,s)$. Then according to Proposition 4.1 of \cite{Freudenburg.13}, the mapping
\[
d: Q_{(r,s+1)}\to Q_{(r,s)}
\]
is surjective if $2s<3r$.
Thus, given $m\ge 1$, each mapping in the following sequences of maps is surjective:
\[
t\cdot Q_{(2m,3m)}\subset Q_{(2m+1,3m)}\xleftarrow{d} Q_{(2m+1,3m+1)}\xleftarrow{d} Q_{(2m+1,3m+2)}
\]
and
\[
t\cdot Q_{(2m-1,3m-1)}\subset Q_{(2m,3m-1)}\xleftarrow{d} Q_{(2m,3m)}
\]
Consequently, there exists a sequence $w_n\in Q$, $n\ge 0$, such that $w_0=1$, and for all $m\ge 0$,
\[
w_{3m}\in Q_{(2m,3m)}\,\, ,\,\, w_{3m+1}\in Q_{(2m+1,3m+1)}\,\, ,\,\, w_{3m+2}\in Q_{(2m+1,3m+2)}
\]
where:
\[
dw_{3m+3}=t\cdot w_{3m+2}\quad ,\quad dw_{3m+2}=w_{3m+1}\quad ,\quad dw_{3m+1}=t\cdot w_{3m}
\]
With the sequence $w_n$ so constructed, it follows that for $m\ge 1$:
\begin{equation*}
D^{3i}w_{3m}=d^{3i}w_{3m}=t^{2i}w_{3(m-i)}=a^{6i}w_{3(m-i)}=(Dv)^{3i}w_{3(m-i)}\quad (0\le i\le m)
\end{equation*}
Therefore, for $0\le i\le m$, we have:
\medskip
\begin{itemize}
\item [(i)] $D^{3i}(aw_{3m}) = a(Dv)^{3i}w_{3(m-i)}$
\smallskip
\item [(ii)] $D^{3i+1}(aw_{3m}) = d(a(Dv)^{3i}w_{3(m-i)}) = a(Dv)^{3i}tw_{3(m-i)-1} = a^2(Dv)^{3i+1}w_{3(m-i)-1}$
\smallskip
\item [(iii)] $D^{3i+2}(aw_{3m}) = d(a^2(Dv)^{3i+1}w_{3(m-i)-1}) = a^2(Dv)^{3i+1}w_{3(m-i)-2} = (Dv)^{3i+2}w_{3(m-i)-2}$
\end{itemize}
\medskip
We see that:
\begin{equation}\label{divides}
 (Dv)^j \,\, {\rm divides}\,\, D^j(aw_{3m}) \,\, {\rm for\,\, each}\,\, j \,\, (0\le j\le 3m)
\end{equation}
Therefore, if we define $s_{3m}= (-1)^{3m}\pi_v(aw_{3m})$ for $m\ge 0$, then $s_{3m}\in A$ for each $m\ge 0$. 
Using the equality (\ref{commute}) in {\it Sect.~2.1}, it follows that for $m\ge 1$:
\begin{eqnarray*}
\frac{\partial^3}{\partial v^3}s_{3m} &=& \frac{\partial^2}{\partial v^2} (-1)^{3m-1}\pi_v(aDw_{3m})\frac{\partial}{\partial v}\frac{v}{a^2}\\
&=& \frac{\partial}{\partial v} (-1)^{3m-2}\pi_v(aD^2w_{3m})\frac{1}{a^2}\frac{\partial}{\partial v}\frac{v}{a^2}\\
&=&  (-1)^{3m-3}\pi_v(aD^3w_{3m})\frac{1}{a^4}\frac{\partial}{\partial v}\frac{v}{a^2}\\
&=& (-1)^{3m-3}\pi_v(a(a^2)^3w_{3(m-1)})\frac{1}{a^6}\\
&=& (-1)^{3(m-1)}\pi_v(aw_{3(m-1)})\\
&=& s_{3(m-1)}
\end{eqnarray*}
Define:
\[
s_{3m-1}=\frac{\partial}{\partial v}s_{3m} \quad {\rm and}\quad s_{3m-2}=\frac{\partial}{\partial v}s_{3m-1}\quad (m\ge 1)
\]
Then $\hat{s}:=(s_n)$ is a $\partial$-cable rooted at $a$ with $s_n\in A_{(2n+1,n)}$ for each $n\ge 0$.
\end{proof}

\begin{remark}\label{list} {\rm Let $\hat{s}=(s_n)\in \mathcal{S}_a$ be given. Since $\dim A_{(2n+1,n)}=1$ for $n=0,...,5$, the elements $s_0,...,s_5$ are uniquely determined; see {\it Cor.~\ref{dimension1} (a)}. They are given by:
\begin{eqnarray*}
0!s_0&=&a\\
1!s_1&=&av-x\\
2!s_2&=&av^2-2xv+2a^2y\\
3!s_3&=&av^3-3xv^2+6a^2yv-6a^4z\\
4!s_4&=&av^4-4xv^3+12a^2yv^2-24a^4zv+24a^3xz-12a^3y^2\\
5!s_5&=&av^5-5xv^4+20a^2yv^3-60a^4zv^2+120a^3xzv-60a^3y^2v-72x^2a^2z+36xa^2y^2+24a^5yz
\end{eqnarray*}
Note the identities:}
\begin{equation}\label{FG}
F=2s_0s_2-s_1^2 \,\, ,\,\, -G = 3s_0^2s_3-3s_0s_1s_2+s_1^3\,\, ,\,\, 2s_0s_4=2s_1s_3-s_2^2 \,\, ,\,\,  5s_0s_5=3s_1s_4-s_2s_3 
\end{equation}
\end{remark}


\section{Generators of $\bar{A}$ and $A$}

The main result of this section is the following.

\begin{theorem}\label{A-generators} Let $\hat{s}=(s_n)\in \mathcal{S}_a$ be given. 
\begin{itemize}
\item [{\bf (a)}] $A=k[h,\hat{s}]$
\smallskip
\item [{\bf (b)}] $A$ is not finitely generated as a $k$-algebra
\smallskip
\item [{\bf (c)}] The generating set $\{ h,s_n\}_{n\ge 0}$ is minimal in the sense that no proper subset generates $A$. 
\end{itemize}
\end{theorem}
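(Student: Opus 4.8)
The plan is to establish part (a) first, then deduce (b) and (c). For part (a), set $A' = k[h,\hat{s}] \subseteq A$. Since $\partial = \partial/\partial v|_A$ and $\partial s_n = s_{n-1}$ with $s_0 = a \in \krn\partial$, while $h \in R \subseteq \krn\partial$ (as $h$ does not involve $v$), $\partial$ restricts to $A'$ and $A'$ is a $\partial$-stable subring. The key point is a degree-induction argument on the $\Z^2$-grading: I claim $A_{(r,s)} \subseteq A'$ for all $r,s$. The elements $a = s_0$, $F = 2s_0s_2 - s_1^2$, $G$ (via \eqref{FG}), and $h$ all lie in $A'$, so $R\cap A = k[a,F,G,h] \subseteq A'$ by {\it Lem.~\ref{mini}}'s ambient ring. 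One then argues that an arbitrary homogeneous $w \in A_{(r,s)}$ can be reduced modulo $A'$: apply $\partial$ repeatedly to descend to $\partial$-degree zero, i.e.\ to $\krn\partial \cap A = A\cap k[a,x,y,z,?]$ — more precisely, $\krn\partial = \krn(\partial/\partial v)\cap A = k[a,x,y,z]\cap A = R\cap A \subseteq A'$. Since $\partial$ is locally nilpotent, for $w \in A_{(r,s)}$ with $\partial^{m+1}w = 0$, one builds an element $w' \in A'$ with the same image as $w$ under $\partial^m$ (using that $\hat{s}$ supplies a $\partial$-cable, so for any $c \in \krn\partial\cap A \subseteq A'$ the element $c\, s_m \in A'$ has $\partial^m(c\,s_m) = c\,a$), and replaces $w$ by $w - w'$, which has strictly smaller $\partial$-degree. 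Iterating lands in $\krn\partial\cap A = R\cap A \subseteq A'$, proving $w \in A'$. The main obstacle here is making the reduction step precise: one needs that the ``leading coefficient'' $\pi_v(w)$-type data of $w$ at $\partial$-degree $m$ is an honest element of $R\cap A$ matching a suitable $k$-combination of products $h^e a^{\epsilon}$ times $s_m$ — this is where {\it Lem.~\ref{mini}} and the one-dimensionality results ({\it Cor.~\ref{dimension1}}) are used to pin down the finitely many homogeneous pieces of $R\cap A$ in the relevant degrees.

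For part (b), I would argue that $A$ is not finitely generated because any finite subset of $\{h\}\cup\{s_n\}$ generates a subalgebra of bounded $\partial$-degree (bounded degree in $v$), hence a proper subalgebra; combined with part (a), if $A$ were finitely generated it would be generated by finitely many elements, each a polynomial in $h$ and finitely many $s_n$, contradicting that $s_N$ for large $N$ cannot be so expressed. Concretely, one uses the $\Z^2$-grading: $s_n \in A_{(2n+1,n)}$ and by {\it Lem.~\ref{mini}} and the dimension-one facts, $A_{(2n+1,n)}$ is one-dimensional in the relevant residues, so $s_n$ is (up to scalar) the unique element there, and it is not a polynomial in $h$ and $s_0,\dots,s_{n-1}$ by a degree count in $v$ (the $\partial$-degree of $s_n$ is $n$, while any monomial in $h,s_0,\dots,s_{n-1}$ of the correct bidegree would have to be linear in the $s_i$'s — since the bidegree $(2n+1,n)$ forces the ``$x$-weight'' contribution — and $h s_{n-?}$, $a s_{n-?}$ etc.\ do not have bidegree $(2n+1,n)$ unless the $s$-index is $n$).

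For part (c), minimality: I must show neither $h$ nor any $s_n$ is redundant. For $h$: the subalgebra $k[\hat{s}]$ is contained in $k[a,v,x,y,z]$ and one checks $h \notin k[\hat{s}]$, e.g.\ by a bidegree argument — $h \in A_{(12,6)}$ and the only products of $s_i$'s of bidegree $(12,6)$ are built from $s_0,\dots$ with total first-coordinate $12$ and second $6$; the candidates are quadratic ($s_0 s_6$, $s_1 s_5$, $s_2 s_4$, $s_3^2$) and possibly $s_0^2 \cdot(\text{bidegree }(10,6))$ etc., and one verifies $h$ is not in their span, using the explicit formulas and relations \eqref{FG} (indeed $h = (F^3+G^2)/a^6$, and $a^6 \nmid F^3 + G^2$ inside $k[\hat s]$ in a way that produces $h$ — more carefully, $h \notin k[a,F,G]$ hence $h\notin k[\hat s]$ since $k[\hat s]\cap R = k[a,F,G]$, which follows because $\partial$-degree-zero part of $k[\hat s]$ is $k[a,F,G]$ by \eqref{FG} and an induction). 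For $s_N$: removing it, the subalgebra $k[h, s_n : n\neq N]$ has the property that its homogeneous component in bidegree $(2N+1,N)$ cannot contain $s_N$, again by the bidegree bookkeeping above (any product realizing bidegree $(2N+1,N)$ without using $s_N$ would have to be a product of $\ge 2$ factors $s_{i}, s_j$ with $i+j$... but the first coordinate forces $i+j = N$ while $2(i+j)+2 = 2N+2 \ne 2N+1$, a parity obstruction; with an $h$ or $a$ factor the count fails similarly). The main obstacle in (c) is organizing this bidegree bookkeeping cleanly — I would isolate it as a lemma computing $\dim A_{(2n+1,n)}$ and the list of monomials in the generators landing in that bidegree, after which each non-redundancy claim is a short linear-algebra check against the explicit $s_0,\dots,s_5$ and \eqref{FG}.
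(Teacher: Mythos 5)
Your parts (b) and (c) are essentially the paper's argument: both reduce to the observation that a monomial $h^e s_{i_1}\cdots s_{i_d}$ has bidegree $(2\sum i_j+d+12e,\;\sum i_j+6e)$, so that bidegree $(2N+1,N)$ forces $d=1$ and bidegree $(12,6)$ forces $d=0$; the paper packages this as the non-finite-generation of the degree semigroup of $\bar{A}=A/hA$. These parts are fine once (a) and the linear independence of $\{s_N,s_{N-6}h,\dots\}$ are in hand. One caution: {\it Cor.~\ref{dimension1}} is \emph{deduced from} the proof of {\it Thm.~\ref{A-generators}} in the paper, so you cannot invoke it as an input without re-proving it independently.

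The genuine gap is in part (a), at exactly the step you flag as ``the main obstacle.'' Your induction on $\partial$-degree requires: for every homogeneous $w\in A_{(r,s)}$ with $\partial^{m+1}w=0$, the element $\partial^m w\in (R\cap A)_{(r-2m,s-m)}$ lies in $\partial^m\bigl(k[h,\hat{s}]_{(r,s)}\bigr)$. The mechanism you offer --- $\partial^m(c\,s_m)=ca$ for $c\in R\cap A$, or more generally $h^ea^{\epsilon}s_m\mapsto h^ea^{\epsilon+1}$ --- only produces images divisible by $a$. But $(R\cap A)_{(r-2m,s-m)}$ is spanned by all monomials $a^{e_1}F^{e_2}G^{e_3}h^{e_4}$ of that bidegree: the monomials involving $F$ or $G$ require \emph{products of several} cable vertices as preimages (e.g.\ $F=\partial(3s_0s_3-s_1s_2)$), and {\it Lem.~\ref{mini}} only describes the two special families $(2n+1,n)$ and $(2n+2,n)$, not a general bidegree. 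Worse, the pure powers $h^e$ lie in $\krn\partial$ but \emph{not} in the ideal of $k[h,\hat{s}]$ generated by the cable vertices, which is where $\partial^m\bigl(k[h,\hat{s}]\bigr)$ lives; so to close the induction you must rule out a nonzero $h^e$-component in $\partial^m w$ for $m\ge 1$. That statement is true but is essentially the hard content of the theorem: it is what the paper's {\it Lem.~\ref{ker-delta}} (showing $\partial^{-1}(hA)=R\cap A+hA$, via the identity $H\cap aB=aH$) and {\it Thm.~\ref{surjective}} (a dimension count in the quotient $\bar{A}=A/hA$, where the kernel of the induced derivation is $k[\bar{a},\bar{F},\bar{G}]$ and every positive-degree kernel element \emph{is} in the cable ideal) are designed to supply. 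The paper then lifts from $\bar{A}$ to $A$ by a separate induction on the second grading degree, using $\deg h=(12,6)$. Your direct route in $A$ could in principle be made to work, but only by proving a surjectivity lemma of comparable strength; as written, the reduction step does not go through.
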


\subsection{Generators of $\bar{A}$}
Let $\pi :B\to B/hB$ be the canonical surjection. Given $b\in B$, let $\bar{b}$ denote $\pi (b)$, and for a subalgebra $M\subset B$, let $\bar{M}=\pi (M)$. 
Since $h$ is homogeneous, $\pi$ induces a $\Z^2$-grading on $\bar{B}$, and $\bar{A}$ is a graded subring with:
\[
\bar{A}_{(r,s)}=\pi (A_{(r,s)})
\]
Note that, 
since $h$ is irreducible, $hB$ is a prime ideal of $B$. 
Hence, $B/hB$ and its subring $\bar{A}$ are integral domains. 
Since $D(h)=0$, we have $hB\cap A=hA$. 
Indeed, 
if $P\in B$ is such that $hP\in A$, 
then $hDP=D(hP)=0$, and hence $DP=0$. 
Thus, 
$\bar{A}\cong A/hA$ and so $hA$ is a prime ideal of $A$. 
Since $h\in\krn\partial$, we can define $\delta\in {\rm LND}(\bar{A})$ by $\delta\pi (g)=\pi\partial (g)$. Then $\delta$ is a homogeneous locally nilpotent derivation of $\bar{A}$ of degree $(-2,-1)$. 
Recall that $\krn\partial = R\cap A = k[a,F,G,h]$. 

\begin{lemma}\label{ker-delta} $\krn\delta = \pi(\krn\partial )=k[\bar{a},\bar{F},\bar{G}]$
\end{lemma}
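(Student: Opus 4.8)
The plan is to show both inclusions in $\krn\delta = \pi(\krn\partial)$, and then identify $\pi(\krn\partial)$ explicitly. Recall $\krn\partial = R\cap A = k[a,F,G,h]$, so $\pi(\krn\partial) = k[\bar a,\bar F,\bar G,\bar h] = k[\bar a,\bar F,\bar G]$ since $\bar h = \pi(h) = 0$. The inclusion $\pi(\krn\partial)\subset\krn\delta$ is immediate from the definition $\delta\pi(g)=\pi\partial(g)$: if $g\in\krn\partial$ then $\delta\pi(g)=\pi(0)=0$. So the substance is the reverse inclusion $\krn\delta\subset\pi(\krn\partial)$.

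For that, I would take a homogeneous $\bar g\in\krn\delta_{(r,s)}$ and lift it to a homogeneous $g\in A_{(r,s)}$. Then $\pi\partial(g)=\delta\pi(g)=0$, so $\partial(g)\in hB\cap A = hA$; write $\partial(g)=h g'$ with $g'\in A$ homogeneous of degree $(r-2,s-1)$. The key point is that $h\in\krn\partial$, so $\partial$ descends to the localization or — more simply — one checks that $\partial(g) = hg'$ with $\partial(h)=0$ forces $g'$ to be "$\partial$-integrable modulo the structure of $A$": since $\partial$ is locally nilpotent on $A$ and commutes with multiplication by $h$, applying $\partial$ repeatedly to $g$ stays in $hA$ (as $\partial^i g = h\,\partial^{i-1}g'$ for $i\ge 1$). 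Now the goal is to modify $g$ by an element of $hA$ to land in $\krn\partial$: I want to find $g''\in A$ with $\partial(g - h g'') = 0$, i.e.\ $h g' = \partial(g) = h\partial(g'')$, i.e.\ $g' = \partial(g'')$. So the whole problem reduces to showing $g'\in\partial(A)$, i.e.\ that $g'$ has a $\partial$-antiderivative in $A$.

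This last reduction is where the real content lies, and I expect it to be the main obstacle: one must show that whenever $hg' = \partial g$ for $g,g'\in A$, the element $g'$ lies in the image $\partial(A)$. The natural tool is a dimension count in the graded pieces. Since $\partial$ is homogeneous of degree $(-2,-1)$ and $\bar A$ is finitely graded, I would argue by induction on the $(r,s)$-bidegree (say on $s$, or on $r+s$), using the surjectivity facts already available: on $R\cap A = k[a,F,G,h]$ the derivation $\partial$ vanishes, and on $A$ itself the local-slice structure of $D$ (with $v$ a local slice, $Dv = a^2$) gives $A_{a} = (\krn\partial)_a[v]$, so $\partial$ on $A_a$ is essentially $\partial/\partial v$ and is surjective in each graded piece after inverting $a$. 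The delicate part is controlling denominators: one needs that the antiderivative $g''$, a priori only in $A_a$, actually lies in $A$. Here I would use that $g = h g'' \pmod{\krn\partial}$ is forced once we know $g''\in A$, combined with the irreducibility of $a$ and factorial closedness of $\krn D|_R$ established earlier, to clear powers of $a$.

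Once $\krn\delta = \pi(\krn\partial)$ is established, the identification $\pi(\krn\partial) = k[\bar a,\bar F,\bar G]$ follows because $\krn\partial = k[a,F,G,h]$ and the relation $a^6 h = F^3 + G^2$ shows $\bar h$ is not needed as a generator — indeed $\bar h = 0$ in $\bar A$ by definition of $\bar A = A/hA$, which is even more direct. So the only genuine work is the reverse inclusion, and within it, the claim that $g'\in\partial(A)$; I would isolate that as the crux and handle it by the graded induction sketched above, leaning on {\it Lem.~\ref{mini}} and the explicit low-degree generators for the base cases.
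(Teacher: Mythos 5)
Your reduction is sound as far as it goes: the inclusion $\pi(\krn\partial)\subset\krn\delta$ is indeed immediate, and the reverse inclusion does reduce to showing that $\partial g\in hA$ forces $g\in \krn\partial+hA$, equivalently that $g'\in\partial(A)$ when $\partial g=hg'$. This is exactly the reformulation the paper works with ($\partial^{-1}(hA)=R\cap A+hA$). But you have stopped at the point where the real work begins, and the sketch you give for the crux does not close the gap. Note first that $\partial$ is \emph{not} surjective on $A$ (for instance $A_{(14,7)}=\{0\}$ while $h\in A_{(12,6)}$ is nonzero, so $h\notin\partial(A)$), so ``$g'\in\partial(A)$'' genuinely depends on the hypothesis $hg'\in\partial(A)$ and cannot be obtained from a general surjectivity statement. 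Your plan is to integrate in $A_a$ (where $\partial$ is surjective because $s_1=av-x$ is a local slice with $\partial s_1=a$ --- incidentally your ``$A_a=(\krn\partial)_a[v]$'' is not right as written, since $v\notin A_a$; you need $v-x/a$ or $av-x$) and then ``clear powers of $a$.'' That clearing step is the entire content of the lemma, and neither irreducibility of $a$ nor factorial closedness of $R\cap A$ delivers it: what is needed is a statement of the form $(R\cap A+hB)\cap aB=a(R\cap A+hB)$, and a \emph{sum} of two nice subrings/ideals has no a priori reason to satisfy such an identity. The paper proves exactly this identity by an explicit computation: writing $g=p(a,F)G^{\epsilon}+h\rho$, setting $a=0$, and using that $h|_{a=0}=3x^2(2xz-y^2)$ with $2xz-y^2$ transcendental over $k[x]$. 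Nothing in your sketch plays this role.

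A second concern is that your proposed ``graded induction leaning on Lem.~\ref{mini}'' is not set up to work: Lemma~\ref{mini} only describes $R\cap A_{(r,s)}$ for bidegrees of the form $(2n+1,n)$ and $(2n+2,n)$, whereas the homogeneous pieces of $A$ you must handle have arbitrary $r-2s\ge 0$; and the finer dimension counts for $A_{(r,s)}$ (Cor.~\ref{dimension1}, Thm.~\ref{surjective}) are proved \emph{after} and \emph{using} this lemma, so invoking them here would be circular. The paper's actual route avoids all of this by two successive Taylor expansions --- first $f=L(v)$ in $B=R[v]$ to reduce to $r=L(0)\in R$ with $Dr\in hR$, then $a^nr=P(x)$ using the local slice $x$ of $D|_R$ --- followed by the division identity $H\cap a^nB=a^nH$ to remove the factor $a^n$. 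You would need to supply an argument of comparable substance for your denominator-clearing step before the proof is complete.
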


\begin{proof} It must be shown that $\partial^{-1}(hA)=R\cap A+hA$. The inclusion $R\cap A+hA\subset \partial^{-1}(hA)$ is clear. For the converse, we first show that, if $H=R\cap A+hB$, then $H\cap aB=aH$.

Since $R\cap A=k[a,F,G,h]$ and $F^3+G^2\in hR$, we have:
\[
H=k[a,F]+k[a,F]G+hB
\]
In addition, $H$ is a graded subring of $B$, and if $g\in H_{(r,s)}$, then $g\in k[a,F]+hB$ for $s$ even, and $g\in k[a,F]G+hB$ for $s$ odd. Write 
$g=p(a,F)G^{\epsilon}+h\rho$, where $p\in k^{[2]}$, $\rho\in B$ and $\epsilon\in\{ 0,1\}$. If $g\in aB$, then setting $a=0$ yields the following equation in $k[x,y,z,v]$:
\[
(h\rho )|_{a=0}=3x^2(2xz-y^2)\rho |_{a=0}   =-p(0,-x^2)x^{3\epsilon}\in k[x]
\]
This means $\rho\in aB$, since $2xz-y^2$ is transcendental over $k[x]$. Therefore, $p(a,F)\in aB$, and since $R\cap A$ is factorially closed in 
$B$ it follows that $p(a,F)\in a(R\cap A)$. So $g\in aH$. This shows $H\cap aB=aH$.

Suppose that $f\in A$ and $\partial f\in hA$. Let $L\in R^{[1]}$ be such that $f=L(v)=\sum_i\frac{1}{i!}L^{(i)}(0)v^i$. 
We have:
\[
\partial^if=L^{(i)}(v)\in hA \quad \forall i\ge 1
\quad\Rightarrow\quad L^{(i)}(0)\in hR \quad \forall i\ge 1
\]
Therefore, $f=hq+r$ for $q\in B$ and $r=L(0)\in R$. It follows that $0=Df=hDq+Dr$, which implies $Dr\in R\cap hB=hR$. 

The restriction of $D$ to $R$ has kernel $R\cap A$ and local slice $x$. 
So there exists $n\ge 0$ and $P\in (R\cap A)^{[1]}$ with $a^nr=P(x)=\sum_i\frac{1}{i!}P^{(i)}(0)x^i$.
We thus have:
\[
a^nD^ir=P^{(i)}(x)a^{3i}\in hR \quad \forall i\ge 1 \quad\Rightarrow\quad P^{(i)}(x)\in hR \quad \forall i\ge 1
\quad\Rightarrow\quad P^{(i)}(0)\in h(R\cap A) \quad \forall i\ge 1
\]
Therefore, $a^nr\in (R\cap A) + h(R\cap A)[x]\subset H$. 
 
By repeated application of the identity $H\cap aB = aH$, we have that $H\cap a^nB=a^nH$. It follows that 
$a^nr\in H\cap a^nB=a^nH$. 
Therefore, $r\in H$ and $f=hq+r\in hB+H = H$. Since $A$ is factorially closed in $B$, we conclude that $f\in R\cap A+hA$.
\end{proof} 

Given $\hat{s}=(s_n)\in \mathcal{S}_a$, 
we have $s_0=a\not\in hB$, and so $\bar{s}_0\ne 0$. 
Since $\delta \pi =\pi \partial $, 
we see that $\pi \hat{s}:=(\bar{s}_n)$ 
is a $\delta $-cable. 
If $\phi_{\pi\hat{s}}:\Omega\to \bar{A}$ 
is the associated mapping, then 
$\phi_{\pi\hat{s}}\Delta = \delta\phi_{\pi\hat{s}}$ 
(c.f.~{\it Sect.~\ref{D-cables}} (x)). 
We also note that $\krn \phi_{\pi\hat{s}}$ 
is a homogeneous ideal of $\Omega $, 
since 
$\phi _{\pi\hat{s}}(\Omega _{(r,s)})\subset \bar{A}_{(2s+r,s)}$ 
for each $r,s\ge 0$.

\begin{theorem}\label{surjective} $\phi_{\pi\hat{s}}$ is surjective.
\end{theorem}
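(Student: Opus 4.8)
Set $M=\phi_{\pi\hat s}(\Omega )\subseteq\bar A$; since $\phi_{\pi\hat s}(\Omega_{(r,s)})\subseteq\bar A_{(2s+r,s)}$, this is a graded $k$-subalgebra, generated by the vertices $\bar s_0,\bar s_1,\dots$ of the $\delta$-cable $\pi\hat s$. Two facts come for free. First, $M\supseteq\krn\delta$: the identities (\ref{FG}) give $\bar a=\phi_{\pi\hat s}(x_0)$, $\bar F=\phi_{\pi\hat s}(2x_0x_2-x_1^2)$ and $\bar G=-\phi_{\pi\hat s}(3x_0^2x_3-3x_0x_1x_2+x_1^3)$, and $\krn\delta=k[\bar a,\bar F,\bar G]$ by {\it Lem.~\ref{ker-delta}}. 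Second, $\delta$ restricts to $M$, since $\delta\bar s_n=\bar s_{n-1}$ and $\delta$ kills $\krn\delta\subseteq M$; thus $M=(\krn\delta )[\pi\hat s]$. The object is to show that these force $M=\bar A$, which by homogeneity of $\krn\phi_{\pi\hat s}$ amounts to $\bar A_{(r,s)}\subseteq M$ for all $r,s$.

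\textbf{Localization at $\bar a$.} Because $\delta^2\bar s_1=\delta\bar a=0$ while $\delta\bar s_1=\bar a\ne 0$, the element $\bar s_1$ is a local slice for $\delta$ with $\delta\bar s_1=\bar a$; hence, by the localization formula of {\it Sect.~\ref{LNDs}}, $\bar A_{\bar a}=(\krn\delta )_{\bar a}[\bar s_1]$ with $\bar s_1$ transcendental over $(\krn\delta )_{\bar a}$. Since $(\krn\delta )_{\bar a}\subseteq M_{\bar a}$ and $\bar s_1\in M$, we get $M_{\bar a}=\bar A_{\bar a}$. In particular every $\bar s_n$ lies in $(\krn\delta )_{\bar a}[\bar s_1]$, and (\ref{FG}) records this explicitly for small $n$: $2\bar a\bar s_2=\bar F+\bar s_1^2$, $\;6\bar a^2\bar s_3=3\bar s_1\bar F+\bar s_1^3-2\bar G$, $\;2\bar a\bar s_4=2\bar s_1\bar s_3-\bar s_2^2$, $\;5\bar a\bar s_5=3\bar s_1\bar s_4-\bar s_2\bar s_3$, and similarly for the higher $\bar s_n$. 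It follows that each homogeneous $b\in\bar A$ satisfies $\bar a^Nb\in M$ for some $N\ge 0$.

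\textbf{The descent, and the main obstacle.} It now suffices to prove $\bar a\bar A\cap M=\bar a M$: granting this, if $b\in\bar A$ is homogeneous and $N$ is least with $\bar a^Nb\in M$, then $N\ge 1$ would give $\bar a^Nb\in\bar a\bar A\cap M=\bar a M$, hence $\bar a^{N-1}b\in M$ after cancelling $\bar a$ in the domain $\bar A$, contradicting minimality; so $N=0$, $b\in M$, and $\bar A=M$. Establishing $\bar a\bar A\cap M=\bar a M$ --- i.e.\ that a homogeneous element of $M$ which is divisible by $\bar a$ in $\bar A$ is already divisible by $\bar a$ in $M$ --- is the crux. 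The plan is to use the relations (\ref{FG}) and their higher analogues, which express the $\bar s_n$ over $(\krn\delta )[\bar s_1]$ with only powers of $\bar a$ in the denominators, together with the $\Z^2$-grading, which caps the exponent of $\bar a$ that can appear in a given bidegree; rewriting a divisible element against these relations and matching bidegrees should let one cancel the $\bar a$. This is the one step that uses the specific shape of $D$ rather than general properties of locally nilpotent derivations, and is where I expect the real work to lie; everything before it is a formal consequence of {\it Lem.~\ref{ker-delta}}, the identities (\ref{FG}), and the localization theory of {\it Sect.~\ref{LNDs}}.
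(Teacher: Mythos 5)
Your reduction is clean as far as it goes: $M:=\phi_{\pi\hat s}(\Omega)$ contains $\krn\delta=k[\bar a,\bar F,\bar G]$ by {\it Lem.~\ref{ker-delta}} and (\ref{FG}), $\bar s_1$ is a local slice with $\delta\bar s_1=\bar a$, so $M_{\bar a}=(\krn\delta)_{\bar a}[\bar s_1]=\bar A_{\bar a}$, and the theorem would follow from $\bar a\bar A\cap M=\bar a M$. But that last identity is exactly where your argument stops, and it is not a small remaining step: given $M_{\bar a}=\bar A_{\bar a}$, the statement $\bar a\bar A\cap M=\bar a M$ is \emph{equivalent} to $M=\bar A$, so you have reformulated the theorem rather than proved it. Worse, the plan you sketch for closing the gap --- rewriting an element of $M$ divisible by $\bar a$ "against the relations (\ref{FG}) and their higher analogues" --- presupposes control of the full ideal of relations among the $\bar s_n$. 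In the paper that control is precisely {\it Thm.~\ref{Abar-relations}} ($\krn\phi_{\pi\hat s}=\mathcal{Q}_4$), which is proved only \emph{after} and \emph{using} the surjectivity of $\phi_{\pi\hat s}$ (via {\it Cor.~\ref{cor1}(a)}). So the proposed route is either circular or requires a substantial independent argument that is not supplied.

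The paper's proof avoids divisibility by $\bar a$ altogether and instead exploits an ingredient your write-up never uses: the surjectivity of $\Delta:\Omega_+\to\Omega_+$. Via $\phi_{\pi\hat s}\Delta=\delta\phi_{\pi\hat s}$ this gives surjectivity of $\delta:A'_+\to A'_+$ where $A'=M$, and then a dimension count on each graded piece along the diagonals $(2n+\ell,n)$,
\[
\dim \bar A_{(2n+\ell,n)}=\dim C_{(2n+\ell,n)}+\dim\delta\bar A_{(2n+\ell,n)}
=\dim C_{(2n+\ell,n)}+\dim\delta A'_{(2n+\ell,n)}=\dim A'_{(2n+\ell,n)},
\]
with $C=\krn\delta\subset A'$, yields $A'_{(2n+\ell,n)}=\bar A_{(2n+\ell,n)}$ by induction on $n$. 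If you want to salvage your approach, you would need to prove $\bar a\bar A\cap M=\bar a M$ (equivalently, injectivity of $M/\bar aM\to\bar A/\bar a\bar A$) by some means independent of the relations in $\bar A$; as written, the crux of the theorem is asserted, not established.
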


\begin{proof}
Define:
\[
A' =\phi_{\pi\hat{s}}(\Omega ) = k[\pi\hat{s}]  \,\, , \quad A'_+ = \phi_{\pi\hat{s}} (\Omega_+) \quad {\rm and}\quad A'_{(r,s)} = A'\cap\bar{A}_{(r,s)}
\]
Since $\Delta :\Omega_+\to\Omega_+$ is surjective and $\phi_{\pi\hat{s}}\Delta = \delta\phi_{\pi\hat{s}}$, it follows that the mapping $\delta :A'_+\to A'_+$ is surjective. 
In addition, define:
\[
C=\krn\delta \quad {\rm and}\quad C_{(r,s)}=C\cap\bar{A}_{(r,s)}
\]
Then from {\it Lem.~\ref{ker-delta}} and (\ref{FG}) we see that:
\begin{equation}\label{FG-bar}
C=k[\bar{a},\bar{F},\bar{G}] \,\, ,\quad \bar{F}=2\bar{s}_0\bar{s}_2-\bar{s}_1^2 \,\, ,\quad  -\bar{G}=3\bar{s}_0^2\bar{s}_3-3\bar{s}_0\bar{s}_1\bar{s}_2+\bar{s}_1^3
\end{equation}
Therefore, $C\subset A'$ and $\krn\delta\vert_{A'}=C$.

Fix $\ell\in\Z$. We show by induction on $n$ that, for each integer $n\ge 0$:
\begin{equation}\label{induct}
A'_{(2n+\ell ,n)} = \bar{A}_{(2n+\ell ,n)}
\end{equation}
For $n=0$, it is easy to see that $\bar{A}_{(\ell ,0)}=\{ 0\}$ if $\ell <0$. 
If $\ell\ge 0$, 
then $\bar{A}_{(\ell ,0)}=\langle a^{\ell }\rangle 
=\langle\bar{s}_0^{\ell}\rangle$, since $B_{(\ell ,0)}=\langle a^{\ell }\rangle $. 
So (\ref{induct}) holds for $n=0$. 
Since $B_{(2,1)}=\langle v\rangle $, 
we have $A'_{(2,1)}=\bar{A}_{(2,1)}=\{ 0\} $. 
Hence, (\ref{induct}) also holds for $n=1$ and $\ell =0$.

Given $n\ge 1$, assume that: 
\[
(n,\ell )\ne (1,0)\quad\text{and}\quad
A'_{(2(n-1)+\ell ,n-1)}=\bar{A}_{(2(n-1)+\ell ,n-1)}
\]
Since $\delta :A'_+\to A'_+$ is surjective and $A'_+ = \oplus_{(r,s)\ne (0,0)}A'_{(r,s)}$,
it follows that:
\[
\delta A'_{(2n+\ell ,n)}=A'_{(2(n-1)+\ell ,n-1)}=\bar{A}_{(2(n-1)+\ell ,n-1)}
\]
Since $A'_{(2n+\ell ,n)}\subset\bar{A}_{(2n+\ell ,n)}$, we have
\[
 \bar{A}_{(2(n-1)+\ell,n-1)}=\delta A'_{(2n+\ell ,n)}\subset\delta\bar{A}_{(2n+\ell ,n)}\subset \bar{A}_{(2(n-1)+\ell ,n-1)}
 \]
which implies $\delta A'_{(2n+\ell ,n)} = \delta\bar{A}_{(2n+\ell ,n)}$. 
Therefore:
\begin{eqnarray*}
\dim \bar{A}_{(2n+\ell ,n)} &=& \dim C_{(2n+\ell ,n)} + \dim\delta\bar{A}_{(2n+\ell ,n)} \\
&=& \dim C_{(2n+\ell ,n)} + \dim\delta A'_{(2n+\ell ,n)} \\
&=& \dim A'_{(2n+\ell ,n)} 
\end{eqnarray*}
It follows that $A'_{(2n+\ell ,n)} = \bar{A}_{(2n+\ell ,n)}$. 
By induction, we conclude that (\ref{induct}) holds for all $n\ge 0$. 
\end{proof}

\begin{corollary}\label{cor1} Let $\hat{s}=(s_n)\in \mathcal{S}_a$ be given. 
\begin{itemize}
\item [{\bf (a)}] $\bar{A}=k[\pi\hat{s}]$
\smallskip
\item [{\bf (b)}] $\bar{A}$ is not finitely generated as a $k$-algebra
\smallskip
\item [{\bf (c)}] The generating set $\{ \bar{s}_n\}_{n\ge 0}$ is minimal in the sense that no proper subset generates $\bar{A}$. 
\end{itemize}
\end{corollary}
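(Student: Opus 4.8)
The plan is to read off part (a) directly from Theorem~\ref{surjective}, and then to obtain both (b) and (c) from a single bookkeeping observation about the second component of the $\Z^2$-grading. For (a): by construction $A'=\phi_{\pi\hat s}(\Omega)=k[\pi\hat s]$, and the proof of Theorem~\ref{surjective} establishes $A'_{(2n+\ell,n)}=\bar A_{(2n+\ell,n)}$ for every $n\ge 0$ and every $\ell\in\Z$. Since an arbitrary homogeneous component $\bar A_{(r,s)}$ is of the form $\bar A_{(2n+\ell,n)}$ with $n=s$ and $\ell=r-2s$, this gives $A'=\bar A$, that is, $\bar A=k[\pi\hat s]$.

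For (c) I would first note that $\bar s_n\ne 0$ for every $n\ge 0$. This is immediate: $\pi\hat s=(\bar s_n)$ is a $\delta$-cable rooted at $\bar s_0=\bar a\ne 0$ with $\delta\bar s_n=\bar s_{n-1}$, so if $\bar s_n=0$ for some $n\ge 1$ then $\bar s_{n-1}=\delta(0)=0$, and descending we would reach $\bar s_0=0$, a contradiction. (One could equally argue from $hB\cap A=hA$: if $s_n\in hA$ then $s_n=hs'_n$ with $s'_n\in A$, and applying $\partial$ gives $s_{n-1}=h\,\partial s'_n\in hA$.) Now recall $\deg\bar s_n=(2n+1,n)$, since $s_n\in A_{(2n+1,n)}$ and $\pi$ is graded. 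The key point is that a monomial $\bar s_{m_1}\cdots\bar s_{m_j}$ is homogeneous of degree $\bigl(2(m_1+\cdots+m_j)+j,\ m_1+\cdots+m_j\bigr)$; thus such a monomial has degree $(2n+1,n)$ only if $m_1+\cdots+m_j=n$ and $j=1$, forcing $m_1=n$. Consequently, for any subset $T\subseteq\{\bar s_m\}_{m\ge 0}$ with $\bar s_n\notin T$, the subalgebra $k[T]$ is spanned by homogeneous monomials none of which has degree $(2n+1,n)$, so $k[T]\cap\bar A_{(2n+1,n)}=0$; since $0\ne\bar s_n\in\bar A_{(2n+1,n)}$, this yields $\bar s_n\notin k[T]$. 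Hence no proper subset of $\{\bar s_n\}_{n\ge 0}$ can generate $\bar A$, which is (c).

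Part (b) then follows formally from (a) and (c): if $\bar A$ were generated over $k$ by finitely many elements $g_1,\dots,g_r$, then by (a) each $g_i$ would be a polynomial in finitely many of the $\bar s_m$, say in $\bar s_0,\dots,\bar s_N$, giving $\bar A=k[\bar s_0,\dots,\bar s_N]\subseteq k[\{\bar s_m\}_{m\ne N+1}]$, which contradicts (c) applied with $n=N+1$.

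I do not anticipate a genuine obstacle here: once Theorem~\ref{surjective} is in hand, everything reduces to tracking the second coordinate of the grading, the essential feature being that the first coordinate of $\deg\bar s_n$ exceeds twice the second coordinate by exactly $1$, so the grading records the number of cable-factors occurring in a monomial. The only step requiring a moment's care is the nonvanishing $\bar s_n\ne 0$, dispatched above.
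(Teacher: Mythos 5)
Your proposal is correct and follows essentially the same route as the paper: part (a) is read off from Theorem~\ref{surjective}, and parts (b) and (c) both rest on the observation that $(2n+1,n)$ does not lie in the sub-semigroup generated by the degrees $(2i+1,i)$, $i\ne n$ (your monomial-degree computation is just the explicit form of this). The only cosmetic difference is that you deduce (b) from (c) while the paper argues (b) directly from non-finite generation of the degree semigroup; your explicit check that $\bar s_n\ne 0$ makes precise a point the paper leaves implicit.
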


\begin{proof} Part (a) is implied by {\it Thm.~\ref{surjective}}. For part (b), let $\Sigma\subset\N^2$ be the degree semi-group of $A$. Then part (a) implies that:
\[
\Sigma = \langle (2n+1,n)\,\vert\, n\ge 0\rangle
\]
It will suffice to show that $\Sigma$ is not finitely generated as a semi-group. However, this is obvious, since the element $(2n+1,n)$ does not belong to the sub-semigroup generated by 
$(2i+1,i)$ for $i<n$. This proves part (b). In fact, $(2n+1,n)$ does not even belong to the larger sub-semigroup generated by $(2i+1,i)$ for $i\ne n$, and this implies part (c). 
\end{proof}

\subsection{Proof of Thm.~\ref{A-generators}}

Set $\Gamma =k[\hat{s}]$. Then $\Gamma$ is a graded subring of $A$, where $\Gamma_{(r,s)}=\Gamma\cap A_{(r,s)}$. 
By {\it Cor.~\ref{cor1}(a)}, each $g\in A$ has the form $g=\gamma +h\cdot\alpha$, where $\gamma\in\Gamma$ and $\alpha\in B$. 
Since $g,\gamma ,h\in A$, it follows that $\alpha \in A$. 
Write
\[
\gamma = \sum \gamma_{(r,s)} \quad {\rm and}\quad \alpha = \sum\alpha_{(r,s)}
\]
where $\gamma_{(r,s)}\in \Gamma_{(r,s)}$ and $\alpha_{(r,s)}\in A_{(r,s)}$ for each $r,s\in \Z $. Then the homogeneous decomposition of $g$ is:
\[
g = \sum_{(r,s)}\left(\gamma_{(r,s)}+h\cdot\alpha_{(r-12,s-6)}\right)
\]
When $g$ is homogeneous, there exists $(r,s)$ such that $g=\gamma_{(r,s)}+h\cdot\alpha_{(r-12,s-6)}$. 

For each fixed $r\ge 0$, we show by induction on $s$ that $A_{(r,s)}\subset\Gamma [h]$. We have $A_{(r,0)}=k\cdot a^r\subset \Gamma$, which gives the basis for induction. Given $s\ge 1$, suppose that $A_{(r,i)}\subset\Gamma [h]$ for $0\le i\le s-1$. Given $g\in A_{(r,s)}$, write $g=\gamma_{(r,s)}+h\cdot\alpha_{(r-12,s-6)}$ as above. 
By the induction hypothesis, we have that $\alpha_{(r-12,s-6)}\in \Gamma [h]$. Therefore $g\in\Gamma [h]$. We conclude that $A_{(r,s)}\subset\Gamma [h]$ for all $(r,s)$ with $r,s\ge 0$, and therefore $A\subset\Gamma [h]$. This proves part (a). 

Part (b) is immediately implied by {\it Cor.~\ref{cor1}(b)} and the fact that $\bar{A}$ is the image of $A$ under a $k$-algebra homomorphism. 

For part (c), note that {\it Cor.~\ref{cor1}(c)} implies that any generating subset of $\{ h,s_n\}_{n\ge 0}$ must include each $s_n$. We also cannot exclude $h$, since $(12, 6)$ does not belong to the degree semigroup generated by $\{ (2n+1,n)\,\vert\, n\ge 0\}$. This proves part (c) and completes the proof of {\it Thm.~\ref{A-generators}}. 
$\Box$
\medskip

For the next result, the reader is reminded that $A_{(r,s)}=\{ 0\}$ if $r<0$ or $s<0$. 

\begin{corollary}\label{dimension1} Let $\hat{s}=(s_n)\in \mathcal{S}_a$. Given $n\ge 0$, let $e\ge 0$ be such that $0\le n-6e\le 5$. 
\begin{itemize}
\item [{\bf (a)}] $A_{(2n+1,n)} = k\cdot s_n\oplus h\cdot A_{(2(n-6)+1,n-6)}$ 
\medskip
\item [{\bf (b)}] $\dim A_{(2n+1,n)} = e+1$ 
\medskip
\item [{\bf (c)}] A basis for $A_{(2n+1,n)}$ is given by $\{ s_n,s_{n-6}h,s_{n-12}h^2,...,s_{n-6e}h^e\}$. 
\end{itemize}
\end{corollary}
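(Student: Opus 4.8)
The plan is to deduce Corollary~\ref{dimension1} from Theorem~\ref{A-generators}(a) together with the graded structure already established. First I would fix $n\ge 0$ and write $n=6e+\ell$ with $0\le\ell\le 5$, and argue by induction on $e$. For $e=0$, i.e. $0\le n\le 5$, the claim is that $\dim A_{(2n+1,n)}=1$ with basis $\{s_n\}$; since $A=k[h,\hat s]$ and $h$ has degree $(12,6)$ which cannot contribute in second coordinate $n\le 5$, every homogeneous element of $A_{(2n+1,n)}$ lies in $\Gamma=k[\hat s]$, and by the degree-semigroup analysis in {\it Cor.~\ref{cor1}} (the generators $s_i$ have degrees $(2i+1,i)$ with distinct second coordinates summing additively) the only monomials in the $s_i$ of degree $(2n+1,n)$ are scalar multiples of $s_n$; one should note $s_n\ne0$ because $\hat s$ is a genuine infinite cable and no relation can kill it.

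Next, for the inductive step, given $n$ with $e\ge 1$, write a general homogeneous $g\in A_{(2n+1,n)}$ in the form $g=\gamma_{(2n+1,n)}+h\cdot\alpha_{(2n+1-12,\,n-6)}=\gamma_{(2n+1,n)}+h\cdot\alpha_{(2(n-6)+1,\,n-6)}$ using exactly the decomposition appearing in the proof of Theorem~\ref{A-generators}, where $\gamma\in\Gamma$ and $\alpha\in A$. The term $\gamma_{(2n+1,n)}$ is a $\Gamma$-monomial in the $s_i$ of degree $(2n+1,n)$; since the second coordinates of the $s_i$ are additive and the first coordinate is always $2(\text{second})+\#\text{factors}$, a product of $r$ factors $s_{i_1}\cdots s_{i_r}$ has degree $(2(i_1+\cdots+i_r)+r,\,i_1+\cdots+i_r)$, which forces $r=1$ and hence $\gamma_{(2n+1,n)}\in k\cdot s_n$. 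Thus $A_{(2n+1,n)}=k\cdot s_n+h\cdot A_{(2(n-6)+1,n-6)}$, and the sum is direct because $h\nmid s_n$ in the factorially closed domain $A$ (or because $h$ is prime and $s_n\notin hA$ by the degree count, $s_n$ being irreducible-ish / having a monomial not divisible by the image of $h$). This gives part (a); part (b) follows since by induction $\dim A_{(2(n-6)+1,n-6)}=e$, so $\dim A_{(2n+1,n)}=1+e$; part (c) follows by unwinding the recursion $A_{(2n+1,n)}=k s_n\oplus h A_{(2(n-6)+1,n-6)}=\cdots$ down to the base case $A_{(2(n-6e)+1,n-6e)}=k s_{n-6e}$.

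The main obstacle I expect is justifying the directness of the sum, i.e. that $s_n\notin hA$, and more generally that the monomials $s_n, s_{n-6}h,\dots,s_{n-6e}h^e$ are $k$-linearly independent rather than merely spanning. Linear independence reduces to: no nonzero $k$-combination $\sum_j c_j s_{n-6j}h^j$ vanishes; since $hA$ is prime (shown just before {\it Lem.~\ref{ker-delta}}), if such a combination with $c_0\ne0$ vanished then $s_n\in hA$, and one rules this out by observing that $s_n$ contains the monomial $\frac{1}{n!}av^n$ (visible from the recursive construction in Theorem~\ref{cable}, with leading term $av^n$), whereas every element of $hB$ is divisible by $h$ and $h\in R=k[a,x,y,z]$ contains no pure power of $v$, so $h$ cannot divide a polynomial whose $v$-leading coefficient is $\frac1{n!}a$. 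Iterating (or using that $A$ is a domain and induction on $e$) then kills all $c_j$. Alternatively, directness is automatic if one instead invokes that $\bar A=A/hA$ and the images $\bar s_n$ are part of a minimal generating set with $\bar s_n\ne0$ ({\it Cor.~\ref{cor1}}), so $s_n\notin hA$. Everything else is bookkeeping with the $\Z^2$-grading.
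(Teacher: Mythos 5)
Your proposal is correct and follows essentially the same route as the paper: the paper deduces part (a) from the decomposition $g=\gamma_{(2n+1,n)}+h\cdot\alpha_{(2(n-6)+1,n-6)}$ in the first paragraph of the proof of Theorem~\ref{A-generators}, together with the observations that $\Gamma_{(2n+1,n)}=k\cdot s_n$ (your degree count on products of the $s_i$) and $s_n\notin hB$ (which your leading-$v$-coefficient argument justifies, and which holds for any $\hat{s}\in\mathcal{S}_a$ since $\partial^ns_n=a$ forces the $v$-leading term $\frac{1}{n!}av^n$), and then obtains (b) and (c) by the same induction. The only difference is cosmetic bookkeeping (induction on $e$ versus on $n$), so there is nothing substantive to add.
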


\begin{proof} 
Part (a) is implicit in the first paragraph of 
the proof of {\it Thm.~\ref{A-generators}} 
with $(r,s)=(2n+1,n)$, 
since $\Gamma _{(2n+1,n)}=k\cdot s_n$ 
and $s_n\not\in hB$. 
It follows that $A_{(2n+1,n)}=k\cdot s_n$ 
for $n=0,\ldots ,5$. 
Therefore, 
using part (a), we get parts (b) and (c) by induction on $n$. 
\end{proof}

\begin{remark}\label{infinite} {\rm Consider the field $k(h)=k^{(1)}$ and the $k(h)$-algebra $k(h)\otimes_{k[h]}A=k(h)[\hat{s}]$. Since $\partial h=0$, $\partial$ extends to a locally nilpotent derivation $\tilde{\partial}$ of $k(h)[\hat{s}]$, $\hat{s}$ is a $\tilde{\partial}$-cable, and $k(h)[\hat{s}]$ is a simple cable algebra over $k(h)$ which is of transcendence degree 3 over $k(h)$.}
\end{remark}

\subsection{The $\partial$-Cable $\hat{\sigma}$}

\begin{theorem}\label{unique} There exists a unique $\hat{\sigma}=(\sigma_n)\in \mathcal{S}_a$ such that $n!\sigma_n\equiv -nxv^{n-1}\, ({\rm mod}\, aB)$ for each $n\ge 1$. In addition, $\hat{\sigma}$ satisfies the following.
\begin{itemize}
\item [{\bf (a)}]  If $n,e\ge 0$ with $n\ne 1$, 
then: $\sigma_0\sigma_1h^e\not\in 
\langle \sigma _i\sigma _{n-i}\mid 0\le i\le n/2,i\ne 1\rangle $
\medskip
\item [{\bf (b)}] If $n,e\ge 0$ with $n\ne 2$, 
then: $Fh^e\not\in 
\langle \sigma _i\sigma _{n-i}\mid 0\le i\le n/2\rangle $
\end{itemize}
\end{theorem}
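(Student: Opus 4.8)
The plan is to first pin down $\hat{\sigma}$ by an explicit construction and then verify the non-containment assertions (a) and (b) by reducing modulo the appropriate prime ideal so that Krull dimension / transcendence degree becomes available as a tool. For existence and uniqueness, I would start from an arbitrary $\hat{s}=(s_n)\in\mathcal{S}_a$ (nonempty by Theorem~\ref{cable}) and look for $\hat{\sigma}$ of the form $\hat{\sigma}=\lim(\vec{s},\vec{m},\vec{c})$ with $\vec{s}=(\hat{s},\hat{s},\dots)$ and suitable shifts $\vec{m}$ and kernel coefficients $\vec{c}$; equivalently, $\sigma_n=\sum_{j} c_j\, s_{n-m_j}$ where $c_0=1$, $m_0=0$. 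Since $n!s_n\equiv av^n - nxv^{n-1}\pmod{a^2B}$ by Remark~\ref{list}, the congruence $n!\sigma_n\equiv -nxv^{n-1}\pmod{aB}$ forces the $a$-free part of $\sigma_n$ to vanish; by Lemma~\ref{mini}(a), $R\cap A_{(2n+1,n)}$ is spanned by $ah^e$ when $6\mid n$ and is $0$ otherwise, and $h$ is itself $\equiv -3x^2y^2\pmod{aB}$, so adding a $k[h]$-multiple of earlier cable vertices can kill the $a$-divisible part of the leading coefficient while leaving the $-nxv^{n-1}$ term intact. This determines the coefficients $c_j$ uniquely by descending induction on the $v$-degree, and because any two elements of $\mathcal{S}_a$ in degree $(2n+1,n)$ differ by $k[h]$-combinations of lower $s_i$ (Corollary~\ref{dimension1}(a)), the resulting $\hat\sigma$ is independent of the initial choice of $\hat{s}$ — giving uniqueness.

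For part (a), the key is Theorem~\ref{S-relations}: the map $\phi_{\widehat{x\hat s}}:\Omega\to S$ has kernel exactly $\mathcal{Q}_2$, and $S=k[x,xv,xv^2,\dots]\subset k[x,v]$. I would compose the cable map $\phi_{\hat\sigma}:\Omega\to A$ with reduction $A\to A/(hA+\mathfrak{p})$ for a suitable prime $\mathfrak p$ so that the composite factors through $\Omega/\mathcal{Q}_2\cong S$, sending $x_i\mapsto xv^i/i!$ (up to normalization), i.e.\ $\sigma_i\mapsto$ (a unit)$\cdot xv^i$ and $h\mapsto 0$. Under this map the proposed membership $\sigma_0\sigma_1 h^e\in\langle\sigma_i\sigma_{n-i}\mid 0\le i\le n/2,\ i\ne 1\rangle$ would give, for $e\ge 1$, that $0$ lies in an ideal — no information — so for $e\ge 1$ one instead uses that $h$ is a nonzerodivisor and reduces to the $e=0$ case by the obvious induction (if $\sigma_0\sigma_1 h^e$ were in the ideal, cancel one $h$ after passing to $A/\mathfrak q$ for a prime $\mathfrak q$ with $h\notin\mathfrak q$ but with the ideal still proper). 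For $e=0$: the images $x\cdot xv^{n-i}$, $0\le i\le n/2$, $i\ne 1$, all lie in the single graded piece $S_{(2,n)}$ (in the $\Z^2$-grading transported from $\Omega$), whereas $\sigma_0\sigma_1\mapsto$(unit)$\cdot x\cdot xv$ lies in $S_{(2,1)}$; since these graded pieces are distinct for $n\ne 1$, and $S$ is a domain, the membership is impossible unless $n=1$. Part (b) is the same argument with $\mathcal{Q}_4$ in place of $\mathcal{Q}_2$: by Theorem~\ref{Abar-relations}, $\Omega/\mathcal{Q}_4\cong_k A/hA$, and $\bar F=2\bar s_0\bar s_2-\bar s_1^2$ lives in the graded piece indexed by $(2,2)$, so it cannot be an $\bar A$-combination of the $\bar\sigma_i\bar\sigma_{n-i}$ (which, being homogeneous of $v$-weight $n$, live in the $(2,n)$ piece of the degree-$2$ part) unless $n=2$; again the general $e$ reduces to $e=0$ since $h$ is a nonzerodivisor on $A$.

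The main obstacle I anticipate is making the reduction in (a) and (b) genuinely work: one must check that the ideal generated by the $\sigma_i\sigma_{n-i}$ (with the excluded index), pushed forward to $S$ or to $\bar A$, remains a \emph{proper} homogeneous ideal not containing the target monomial — this requires knowing precisely the $\Z^2$-grading of $S\cong\Omega/\mathcal Q_2$ and of $\bar A\cong\Omega/\mathcal Q_4$ and that $\phi_{\hat\sigma}$ is compatible with it (which follows from $\sigma_n\in A_{(2n+1,n)}$, so $x_n$ has bidegree matching $(2n+1,n)$, equivalently $v$-weight $n$ after factoring out the $x$-weight). The second delicate point is the descending-induction construction of $\hat\sigma$ in the uniqueness part: one needs that at each step the required $k[h]$-correction exists and is forced, which rests on Lemma~\ref{mini} together with the fact that $\partial h=0$ so that adding $k[h]$-multiples of a $\partial$-cable to a $\partial$-cable (via the shifted-sum operations of Lemma~\ref{ops}(c)) again yields a $\partial$-cable rooted at $a$. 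Everything else — commuting $\phi_{\hat\sigma}$ with $\Delta$ and $\partial$, passing to quotients, invoking that $S$ and $\bar A$ are domains — is formal given the results already established.
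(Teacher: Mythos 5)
Your argument for parts (a) and (b) breaks down exactly in the cases where the statement has content. The products $\sigma_i\sigma_{n-i}$ lie in $A_{(2n+2,n)}$, while $\sigma_0\sigma_1h^e$ lies in $A_{(12e+4,6e+1)}$ and $Fh^e$ in $A_{(12e+6,6e+2)}$; these bidegrees \emph{coincide} precisely when $n=6e+1$ (for (a)) and $n=6e+2$ (for (b)). So the ``distinct graded pieces'' argument only disposes of the trivial cases, and gives nothing when $n=6e+1$ resp.\ $n=6e+2$ --- which are the only cases the paper ever invokes (see the remark after the theorem and Lemma~\ref{(P)-constrained equivalence}). Two further problems compound this: $\langle\,\cdot\,\rangle$ in the statement is a $k$-linear span, not an ideal, so ``cancelling one $h$'' to reduce $e\ge 1$ to $e=0$ is not a legitimate operation (a span membership $\sigma_0\sigma_1h^e=\sum c_i\sigma_i\sigma_{n-i}$ has no visible factor of $h$ on the right to cancel); and under the quotient $A\to A/(aA+hA)\cong S$ you propose, $\sigma_0=a$ maps to $0$, not to a unit times $x$, so the target element dies anyway. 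The paper's actual mechanism is evaluation at $v=0$: the construction of $\hat\sigma$ yields $\sigma_i(0)\in aB$ for all $i\ne 1$, hence every $\sigma_i(0)\sigma_{n-i}(0)$ with $0\le i\le n/2$, $i\ne 1$, lies in $a^2B$ (resp.\ in $aB$ for (b)), whereas $(\sigma_0\sigma_1h^e)|_{v=0}=-axh^e\notin a^2B$ and $Fh^e\notin aB$. That divisibility of the $\sigma_i(0)$ is the real content, and it is absent from your proposal.

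For existence and uniqueness your route (correct an arbitrary $\hat s\in\mathcal{S}_a$ by $k[h]$-multiples of shifted copies of itself) is genuinely different from the paper's, which imports an explicit sequence $w_n$ from \cite{Freudenburg.06} with the property that $t$ divides $w_n$ for $n\equiv 1\pmod 3$, $n\ge 4$, and then runs an induction on $P_n(v)=(n-1)!\sigma_n+xv^{n-1}$ using $P_n'=(n-1)P_{n-1}$. Your route can probably be made to work, but the key step is asserted rather than proved: you must show that, once levels $<n$ are corrected, the obstruction $n!s_n+nxv^{n-1}\bmod aB$ is a $v$-constant lying in $\ker(x\partial_y+y\partial_z)\cap k[x,y,z]_{(2n+1,n)}=k[x,2xz-y^2]_{(2n+1,n)}$, which is zero unless $n\equiv 1\pmod 6$ and is then one-dimensional, spanned exactly by the image of the one available correction $h^{(n-1)/6}s_1$. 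Without that degree analysis the claim that the coefficients $c_j$ ``exist and are forced'' is unsupported. (Also, $h|_{a=0}=3x^2(2xz-y^2)$, not $-3x^2y^2$, and the congruence $n!s_n\equiv av^n-nxv^{n-1}\pmod{a^2B}$ cannot be read off from Remark~\ref{list} for a general $\hat s\in\mathcal{S}_a$ and $n\ge 6$.) The uniqueness portion, by contrast, is essentially the paper's argument via Corollary~\ref{dimension1} and evaluation at $a=0$.
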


\begin{proof} Given $P\in B$, let $P(0)$ denote evaluation at $v=0$. 

An explicit sequence $w_n\in k[t,x,y,z]$ of the type used in the proof of {\it Thm.~\ref{cable}} is constructed in \cite{Freudenburg.06}, \S{\,}7.2.1, and in this example, $w_n$ has the property:
\begin{quote}\begin{center}
$t$ divides $w_n$ whenever $n\ge 4$ and $n\equiv 1\, ({\rm mod}\, 3)$
\end{center}\end{quote}
Let $\hat{\sigma}=(\sigma_n)\in \mathcal{S}_a$ be the $\partial$-cable constructed from this sequence. Given $m\ge 1$, it follows from the definition of the functions $s_n=\sigma_n$ given in the proof of {\it Thm.~\ref{cable}} that:
\begin{align*}
\sigma _{3m}
=(-1)^{3m}aw_{3m}-D\left((-1)^{3m}aw_{3m}\right)\dfrac{v}{a^2}
+\dfrac{1}{2}D^2\left((-1)^{3m}aw_{3m}\right)\dfrac{v^2}{a^4}
+\cdots 
\end{align*}
Since $\partial^i\sigma _{3m}/\partial v^i=\sigma _{3m-i}$ 
for $0\le i\le 3m$, this implies that: 
\[
\sigma_{3m}(0)=(-1)^{3m}aw_{3m}\,\, ,\,\, \sigma_{3m-1}(0)=(-1)^{3m-1}a^2w_{3m-1}\,\, ,\,\, \sigma_{3m-2}(0)=(-1)^{3m-2}w_{3m-2}
\]
Since $t=a^3$ divides $w_{3m-2}$ for $m\ge 2$ 
and $\sigma _0(0)=\sigma _0=a$, 
it follows that $a$ divides $\sigma_n(0)$ 
for all $n\ge 0$ with $n\ne 1$. 
We now show by induction on $n$ that:
\begin{equation}\label{hypoth}
a \,\, {\rm divides}\,\, P_n(v):=(n-1)!\sigma_n+xv^{n-1} \quad (n\ge 1)
\end{equation}

First, observe that {\it Cor.~\ref{dimension1}(b)} implies that the functions $\sigma_0,...,\sigma_5$ are uniquely determined. 
In particular, we have 
$\sigma _1=av-x$ (see {\it Remark~\ref{list}}). 
Hence, 
property (\ref{hypoth}) holds for $n=1$. 

Given $n\ge 2$, assume that $a$ divides $P_i(v)$ for $1\le i\le n-1$. We have:
\[
P_n^{\prime}(v) =(n-1)!\sigma_{n-1}+(n-1)xv^{n-2} = (n-1)P_{n-1}(v)
\]
The inductive hypothesis implies that $P_n^{\prime}(v)\in aB$, which means $P_n(v)-P_n(0)\in aB$. Since $P_n(0)=(n-1)!\sigma_n(0)\in aB$, we conclude that 
$P_n(v)\in aB$ for all $n\ge 1$. This proves the existence of $\hat{\sigma}=(\sigma_n)\in\mathcal{S}_a$ such that $n!\sigma_n\equiv -nxv^{n-1}\, ({\rm mod}\, aB)$.

For uniqueness, let $\hat{s}=(s_n)\in\mathcal{S}_a$ be such that $n!s_n\equiv -nxv^{n-1}\, ({\rm mod}\, aB)$ for $n\ge 1$.
Choose $N\ge 1$ such that 6 does not divide $N$, and let $e\ge 0$ be such that $1\le N-6e\le 5$. By {\it Cor.~\ref{dimension1}(c)}, a  basis for $A_{(2N+1,N)}$ is given by:
\[
s_N', s_{N-6}'h,s_{N-12}'h^2,\ldots ,s_{N-6e}'h^e,\ \text{where}\ 
s_n':=n!s_n
\]
Therefore, there exist $c_i\in k$ with 
$N!\sigma_N=c_0s_N'+c_1s_{N-6}'h+\cdots +c_es_{N-6e}'h^e$. The substitution $a\mapsto 0$ yields: 
\[
-Nxv^{N-1} = -c_0Nxv^{N-1}-c_1(N-6)xv^{N-7}h'-\cdots - c_e(N-6e)xv^{N-6e-1}(h')^e,
\]
where $h'=3x^2(2xz-y^2)$. 
This implies that $c_0=1$ and $c_1=\cdots = c_e=0$, meaning that $\sigma_N=s_N$. Therefore, $\hat{\sigma}$ and $\hat{s}$ agree on an infinite number of vertices, which implies that $\hat{\sigma}=\hat{s}$; see {\it \S\ref{D-cables} (vi)}. This proves the uniqueness assertion. 

In order to prove properties (a) and (b), 
recall that $\sigma _i(0)\in aB$ for all $i\ge 0$ with $i\ne 1$. 
Hence, 
$\sigma _i(0)\sigma _{n-i}(0)\in a^2B$ 
($0\le i\le n/2$, $i\ne 1$) if $n\ne 1$, 
and 
$\sigma _i(0)\sigma _{n-i}(0)\in aB$ 
($0\le i\le n/2$) if $n\ne 2$. 
To show (a), 
suppose that 
$\sigma_0\sigma_1h^e\in 
\langle \sigma _i\sigma _{n-i}\mid 0\le i\le n/2,i\ne 1\rangle $. 
Then, 
we have: 
$$
-axh^e=(\sigma_0\sigma_1h^e)|_{v=0}\in 
\langle \sigma _i(0)\sigma _{n-i}(0)
\mid 0\le i\le n/2,i\ne 1\rangle 
\subset a^2B, 
$$
and so $xh^e\in aB$, a contradiction. 
Since $Fh^e\in R\setminus aB$, 
property (b) is proved similarly. 
\end{proof}

We remark that {\it Thm.~\ref{unique}(b)}, 
together with {\it Lem.~\ref{mini}(b)}, implies 
$R\cap A_{(2n+2,n)}\cap \phi _{\hat{\sigma }}(\Omega _{(2,n)})
=\{ 0\} $ 
if $n\equiv 2\pmod{6}$ and $n\ne 2$.

\begin{corollary}\label{prime-ideal} Let $S\subset k[x,v]=k^{[2]}$ be the subalgebra $S=k[x,xv,xv^2,\ldots ]$. 
Given $\lambda\in k$, put $J_{\lambda}=aA+(h-\lambda)A$. 
Then $A/J_{\lambda}$ is isomorphic to $S$. In particular, $J_{\lambda}$ is a prime ideal of $A$ for each $\lambda\in k$. 
\end{corollary}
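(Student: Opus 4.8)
The plan is to quotient first by $a$ and then by $h-\lambda$, using the explicit presentation $A=k[h,\hat\sigma]$ furnished by {\it Thm.~\ref{A-generators}} applied to the cable $\hat\sigma=(\sigma_n)$ of {\it Thm.~\ref{unique}}. First I would record that $aB\cap A=aA$: since $a\in\krn D$, if $P\in B$ satisfies $aP\in A$ then $aDP=D(aP)=0$, hence $DP=0$ and $P\in A$ (this is the same device used for $h$ in {\it \S5.1}). Consequently the reduction map $A\to B/aB=k[x,y,z,v]=k^{[4]}$ has kernel exactly $aA$, so $A/aA$ is isomorphic to its image $\overline A\subset k[x,y,z,v]$.

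Next I would compute $\overline A$ from the generators $\{h,\sigma_n\}$ of $A$. We have $\sigma_0=a\mapsto 0$, while for $n\ge 1$ the defining congruence $n!\sigma_n\equiv -nxv^{n-1}\ ({\rm mod}\ aB)$ of {\it Thm.~\ref{unique}} gives $\sigma_n\mapsto -\tfrac{1}{(n-1)!}xv^{n-1}$; and $h\mapsto h':=3x^2(2xz-y^2)$, the reduction mod $a$ of the explicit formula for $h$. Hence
\[
\overline A = k[h',\,x,\,xv,\,xv^2,\ldots]=S[h'].
\]
To finish the structural part I would show $h'$ is transcendental over $S$: viewing $k[x,y,z,v]=(k[x,v])[y,z]$, the element $h'=-3x^2y^2+6x^3z$ is a nonconstant polynomial in $y,z$ over the domain $k[x,v]$, hence transcendental over ${\rm Frac}(k[x,v])=k(x,v)$; since ${\rm Frac}(S)=k(x,v)$ (as $v=(xv)/x$), $h'$ is transcendental over $S$, so $\overline A=S[h']\cong S^{[1]}$. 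Finally, because $aA\subset J_\lambda$ and the image of $J_\lambda$ in $\overline A$ is $(h'-\lambda)\overline A$, we obtain
\[
A/J_\lambda\ \cong\ \overline A/(h'-\lambda)\overline A\ \cong\ S[h']/(h'-\lambda)\ \cong\ S,
\]
the last map sending $h'\mapsto\lambda$. Since $S\subset k[x,v]$ is a domain, $A/J_\lambda$ is a domain and $J_\lambda$ is prime (and proper, as $S\ne 0$).

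I do not expect a serious obstacle here: once {\it Thm.~\ref{A-generators}} and {\it Thm.~\ref{unique}} are in hand the argument is essentially formal. The two steps that need a moment's care are the identification of $A/aA$ with the subring $\overline A$ of $k^{[4]}$ (i.e.\ $aB\cap A=aA$), and the verification that $h'$ remains transcendental over $S$ after reduction, so that $\overline A$ is genuinely a polynomial ring $S^{[1]}$ rather than something with extra relations; both are short. Everything else reduces to reading off the images of the generators modulo $a$.
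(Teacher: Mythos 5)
Your proof is correct and follows essentially the same route as the paper: both reduce modulo $a$ using $aB\cap A=aA$, identify $A/aA$ with $S[h(0)]\cong S^{[1]}$ via the images of the generators $h,\sigma_n$ from Theorems~\ref{A-generators} and~\ref{unique}, and then set $h(0)=\lambda$. Your explicit verification that $h(0)$ is transcendental over $S$ just fills in a step the paper asserts without detail.
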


\begin{proof} Let $\hat{\sigma}\in\mathcal{S}_a$ be as in {\it Thm.~\ref{unique}}. By {\it Thm.~\ref{A-generators}}, we have 
$A=k[h,\hat{\sigma}]$.

Given $f\in B$, let $f(0)$ denote the evaluation of $f$ at $a=0$. 
Since $D(a)=0$, 
we have $aB\cap A=aA$. 
Indeed, 
if $b\in B$ is such that $ab\in A$, 
then $aD(b)=D(ab)=0$, and so $D(b)=0$. 
Hence, the kernel of the map 
$A\to B$ defined by $f\to f(0)$ equals $aA$. Therefore:
\[
\mathfrak{A}:=A/aA\cong 
k[h(0),\sigma_0(0),\sigma _1(0),\sigma _2(0),\ldots ]
= k[h(0),x,xv,xv^2,\ldots ] 
=S[h(0)]= S^{[1]}
\]
The last equality holds because $h(0)=6x^3z-3x^2y^2$ is transcendental over $k[x,v]$. We conclude that:
\[
A/J_{\lambda} \cong 
\mathfrak{A}/(h(0)-\lambda )\mathfrak{A}\cong S
\]
\end{proof}

\subsection{Hypersurface Actions}\label{hypersurface} Given $\lambda\in k$, the $\G_a$-action on $\A^5$ defined by $D$ restricts to the hypersurface $X_{\lambda}\subset\A^5$ given by $h=\lambda$. We show that the ring of invariants for this action is a simple cable algebra of non-finite type.
Note that since $h$ does not involve $v$, $X_{\lambda}$ is a cylinder, i.e., $X_{\lambda}=Y_{\lambda}\times\A^1$ for the hypersurface $Y_{\lambda}\subset\A^4$ defined by $h=\lambda$. 

Let $\rho_{\lambda}: B\to B/(h-\lambda )B$ be the canonical surjection, and let $D_{\lambda}$ denote the locally nilpotent derivation of $B/(h-\lambda)B$ induced by $D$. 

\begin{theorem}\label{ker-Dbar} If $\hat{s}\in\mathcal{S}_a$, then 
$\krn D_{\lambda}=\rho_{\lambda}(A)=k[\rho_{\lambda}\hat{s}]$ 
and this ring is not finitely generated as a $k$-algebra. 
\end{theorem}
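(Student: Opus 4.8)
The plan is to establish, in order, the identity $\rho_\lambda(A)=k[\rho_\lambda\hat s]$, the equality $\krn D_\lambda=\rho_\lambda(A)$, and finally non-finite generation. The first is immediate from {\it Thm.~\ref{A-generators}}: since $A=k[h,\hat s]$ and $\rho_\lambda(h)=\lambda\in k$, applying $\rho_\lambda$ yields $\rho_\lambda(A)=k[\rho_\lambda\hat s]$. The inclusion $\rho_\lambda(A)\subseteq\krn D_\lambda$ is also immediate, from $D_\lambda\rho_\lambda=\rho_\lambda D$. Two preliminary facts will be used repeatedly. First, exactly as in the proof of {\it Cor.~\ref{prime-ideal}}, $D(h)=0$ forces $(h-\lambda)B\cap A=(h-\lambda)A$, so $\rho_\lambda(A)\cong A/(h-\lambda)A$. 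Second, $\bar a:=\rho_\lambda(a)$ is a nonzerodivisor in $B_\lambda$: if $ab=(h-\lambda)c$ in $B$, then setting $a=0$ gives $0=(h(0)-\lambda)c(0)$ in the domain $k[x,y,z,v]$ with $h(0)-\lambda\ne 0$, whence $c\in aB$ and so $b\in(h-\lambda)B$.

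For the reverse inclusion $\krn D_\lambda\subseteq\rho_\lambda(A)$, I would localize at $a$. Since $v$ is a local slice with $Dv=a^2$ a unit in $B_a$, we have $B_a=A_a[v]$ with $v$ transcendental over $A_a=\krn(D|_{B_a})$ and $D$ acting as $a^2\,d/dv$; reducing modulo $(h-\lambda)\in A_a$ gives $(B_\lambda)_{\bar a}=(A_a/(h-\lambda)A_a)[v]$, on which $D_\lambda$ acts as $\bar a^2\,d/dv$ with $\bar a^2$ a unit, so (using that $\bar a$ is a nonzerodivisor) $(\krn D_\lambda)_{\bar a}=A_a/(h-\lambda)A_a=\rho_\lambda(A)_{\bar a}$. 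Now let $f\in\krn D_\lambda$. Viewed in $(B_\lambda)_{\bar a}$ it lies in $\rho_\lambda(A)_{\bar a}$, so there are $\alpha\in A$ and a minimal $N\ge 0$ with $\bar a^Nf=\rho_\lambda(\alpha)$ in $B_\lambda$. If $N\ge 1$, then $\rho_\lambda(\alpha)\in\bar a B_\lambda$, that is, $\alpha\in aB+(h-\lambda)B$; granting the claim $\alpha\in J_\lambda:=aA+(h-\lambda)A$, write $\alpha=a\beta+(h-\lambda)\gamma$ with $\beta,\gamma\in A$, so $\rho_\lambda(\alpha)=\bar a\,\rho_\lambda(\beta)$, and cancelling the nonzerodivisor $\bar a$ contradicts minimality of $N$. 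Hence $N=0$ and $f\in\rho_\lambda(A)$.

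It remains to prove $A\cap(aB+(h-\lambda)B)=J_\lambda$, the delicate point. Here I would invoke {\it Cor.~\ref{prime-ideal}}: evaluation at $a=0$ identifies $A/aA$ with $S[h(0)]=S^{[1]}$, the polynomial ring over $S$ in the variable $T:=h(0)=6x^3z-3x^2y^2$, and carries $h$ to $T$. Given $\alpha=ab+(h-\lambda)c\in A$, setting $a=0$ gives $\alpha(0)=(T-\lambda)c(0)$ in $k[x,y,z,v]$. Dividing $\alpha(0)\in S[T]$ by the monic polynomial $T-\lambda$ gives $\alpha(0)=(T-\lambda)q+r$ with $q\in S[T]$ and $r=\alpha(0)|_{T=\lambda}\in S\subset k[x,v]$; then $(h(0)-\lambda)(c(0)-q)=r$ holds in $k[x,y,z,v]$, and since $h(0)-\lambda$ has positive degree in $z$ whereas $r$ has none, we must have $r=0$. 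Thus $\alpha(0)\in(h(0)-\lambda)S[T]$; lifting $q$ to some $\tilde q\in A$ (surjectivity of $A\to S[h(0)]$) shows $\alpha-(h-\lambda)\tilde q\in aA$, so $\alpha\in J_\lambda$. For non-finite generation: $\krn D_\lambda=\rho_\lambda(A)$ surjects, via reduction modulo $\bar a$, onto $A/J_\lambda\cong S$ by {\it Cor.~\ref{prime-ideal}}, and $S$ is not finitely generated over $k$; since any quotient of a finitely generated $k$-algebra is finitely generated, $\krn D_\lambda$ cannot be either.

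I expect the localize-and-descend argument --- in particular the identity $A\cap(aB+(h-\lambda)B)=J_\lambda$ --- to be the main obstacle, since it is here that the slice structure of $B_a$ over $A_a$ must be combined with the $S^{[1]}$-description of $A/aA$; the degree-in-$z$ bookkeeping at $a=0$ is the single spot that genuinely requires care, everything else being formal.
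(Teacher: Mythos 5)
Your proof is correct and follows essentially the same route as the paper's: both arguments exploit the local slice $v$ with $Dv=a^2$ (your $B_a=A_a[v]$ packaging is the paper's $a^nf=P(v)$ with $P\in A^{[1]}$) and then descend modulo powers of $a$, concluding non-finite generation by mapping onto $A/J_\lambda\cong S$. The only substantive difference is in the descent lemma: the paper proves $K\cap aB=aK$ for $K=A+(h-\lambda)B$ by writing $g=p(\hat{\sigma})+(h-\lambda)b$ and setting $a=0$, whereas you prove the equivalent statement $A\cap(aB+(h-\lambda)B)=aA+(h-\lambda)A$ by Euclidean division by $h(0)-\lambda$ in $A/aA\cong S^{[1]}$ --- both ultimately rest on the same observation that $h|_{a=0}-\lambda$ has positive degree in $z$ while elements of $S\subset k[x,v]$ do not.
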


\begin{proof} Let $\lambda\in k$ be given, and set $K=A+(h-\lambda )B$. It must be shown that $D^{-1}((h-\lambda )B)=K$. The inclusion $K\subset D^{-1}((h-\lambda )B)$ is clear. For the converse, we first show that $K\cap aB=aK$.

Let $\hat{\sigma}\in\mathcal{S}_a$ be as in {\it Thm.~\ref{unique}}. By {\it Thm.~\ref{A-generators}}, we have 
$A=k[h,\hat{\sigma}]$. Therefore, $K=k[\hat{\sigma}]+(h-\lambda )B$. Given $g\in K$, write 
$g=p(\hat{\sigma })+(h-\lambda )b$ for $p\in\Omega$ and and $b\in B$. If $g\in aB$, then setting $a=0$ yields the following equation in 
$k[x,y,z,v]$:
\[
(3x^2(2xz-y^2)-\lambda )\cdot b|_{a=0}=((h-\lambda )b)|_{a=0} = -p(\hat{\sigma})|_{a=0}\in k[x,xv,xv^2,...]
\]
This means $b\in aB$, 
and hence $p(\hat{\sigma})=g-(h-\lambda )b\in aB$. 
Since $aB\cap A=aA$, 
it follows that $p(\hat{\sigma})\in aA$. 
Therefore, $g\in aK$. This shows $K\cap aB=aK$.
By induction on $n$, 
we get $K\cap a^nB=a^nK$ for each $n\ge 1$.

Now suppose that $f\in B$ and $Df\in (h-\lambda )B$. Since $v$ is a local slice of $D$ with $Dv=a^2$, there exists $n\ge 0$ and $P\in A^{[1]}$ such that $a^nf=P(v)=\sum_i\frac{1}{i!}P^{(i)}(0)v^i$. 
We thus have:
\begin{eqnarray*}
&P^{(i)}(v)a^{2i}& = D^iP(v)=a^nD^if\in (h-\lambda )B \quad \forall i\ge 1 \\
&\Rightarrow& P^{(i)}(v)\in (h-\lambda )B \quad \forall i\ge 1 \\
&\Rightarrow& P^{(i)}(0)\in (h-\lambda )R \quad \forall i\ge 1
\ (\text{since} \,\, B=R[v])
\\
&\Rightarrow& a^nf-P(0)=\sum_{i\ge 1}\frac{1}{i!}P^{(i)}(0)v^i
\in (h-\lambda )B
\end{eqnarray*}
Since $P(0)\in A$, 
we see that $a^nf$ belongs to $K$, 
and hence to $K\cap a^nB=a^nK$. 
Therefore, 
we get $f\in K$. We have thus shown:
\[
\krn D_{\lambda}=\rho_{\lambda}(A)=k[\rho_{\lambda}\hat{s}]
\]

It remains to show that $\rho_{\lambda}(A)$ is not finitely generated as a $k$-algebra. 
Since $D(h-\lambda )=0$, we have 
$\ker \rho_{\lambda}|_A=A\cap (h-\lambda )B
=(h-\lambda )A$. 
So $\rho_{\lambda}(A)\cong A/(h-\lambda )A$. 
By {\it Cor.~\ref{prime-ideal}}, it follows that:
\[
\rho_{\lambda}(A)/a\rho_{\lambda}(A) \cong A/J_{\lambda} \cong k[x,xv,xv^2,...]
\]
Since $\rho_{\lambda}(A)$ maps onto a non-finitely generated $k$-algebra, $\rho_{\lambda}(A)$ is not finitely generated as a 
$k$-algebra. 
\end{proof}


\section{Relations in $\bar{A}$}\label{A-bar}

We continue the notation of the preceding section. The main goal of this section is to show the following.

\begin{theorem}\label{Abar-relations} 
For every $\hat{s}\in\mathcal{S}_a$, 
we have $\ker \phi _{\pi \hat{s}}=\mathcal{Q}_4$. 
Consequently, $\bar{A}\cong_k\Omega/\mathcal{Q}_4$ 
by {\it Cor.~\ref{cor1}(a)}, 
and $\mathcal{Q}_4$ is a homogeneous prime ideal of $\Omega$.
\end{theorem}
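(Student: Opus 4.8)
The plan is as follows. Since $\phi_{\pi\hat s}$ is surjective (Cor.~\ref{cor1}(a), via Thm.~\ref{surjective}) and $\bar A$ is an integral domain, $\krn\phi_{\pi\hat s}$ is a $\Z^2$-homogeneous prime ideal with $\bar A\cong_k\Omega/\krn\phi_{\pi\hat s}$; so the whole theorem reduces to the single equality $\krn\phi_{\pi\hat s}=\mathcal{Q}_4$. I would prove this by two inclusions, after fixing any $\Delta$-basis $\{\hat\theta_n\}$ of $\Omega_2$, which is harmless because $\mathcal{Q}_4$ does not depend on the choice (Lem.~\ref{Q-ideal}(b)).

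For $\mathcal{Q}_4\subseteq\krn\phi_{\pi\hat s}$ it is enough to check $\theta_n^{(j)}\in\krn\phi_{\pi\hat s}$ for every even $n\ge 4$ and every $j\ge 0$, and I would do this by induction on $j$. When $j=0$: since $\Delta\theta_n^{(0)}=0$ and $\partial\phi_{\hat s}=\phi_{\hat s}\Delta$, the element $\phi_{\hat s}(\theta_n^{(0)})$ lies in $\krn\partial\cap A_{(2n+2,n)}=R\cap A_{(2n+2,n)}$, and by Lem.~\ref{mini}(b) this space is $\{0\}$ when $n\equiv 4\ (\mathrm{mod}\ 6)$ and is $\langle Fh^e\rangle$ or $\langle a^2h^e\rangle$ with $e\ge 1$ otherwise (the constraint $e\ge 1$ coming from $n\ge 4$); in every case $\phi_{\hat s}(\theta_n^{(0)})\in hB$, whence $\phi_{\pi\hat s}(\theta_n^{(0)})=\pi\phi_{\hat s}(\theta_n^{(0)})=0$. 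For the inductive step, $\delta\phi_{\pi\hat s}(\theta_n^{(j)})=\phi_{\pi\hat s}(\Delta\theta_n^{(j)})=\phi_{\pi\hat s}(\theta_n^{(j-1)})=0$, so $\phi_{\pi\hat s}(\theta_n^{(j)})\in\krn\delta=k[\bar a,\bar F,\bar G]$ by Lem.~\ref{ker-delta}; but it is $\Z^2$-homogeneous of bidegree $(2(n+j)+2,\,n+j)$, and any monomial $\bar a^{e_1}\bar F^{e_2}\bar G^{e_3}$ of bidegree $(r,s)$ has $e_1=r-3s$, so here $e_1=2-(n+j)<0$, forcing $k[\bar a,\bar F,\bar G]$ to vanish in that bidegree and hence $\phi_{\pi\hat s}(\theta_n^{(j)})=0$.

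For $\krn\phi_{\pi\hat s}\subseteq\mathcal{Q}_4$ I would induct on the $x$-degree $r$ of a $\Z^2$-homogeneous element $P\in\krn\phi_{\pi\hat s}$, the cases $r\le 1$ being immediate since the $\bar s_i$ are linearly independent over $k$ (distinct bidegrees). For $r\ge 2$, Lem.~\ref{Q-ideal}(c) with $n=4$ gives $P=Q+P'$ with $P'\in\mathcal{Q}_4$ and $Q\in(x_0,x_1)^{r-1}$ homogeneous of the same bidegree as $P$; by the inclusion just proved, $Q\in\krn\phi_{\pi\hat s}$, so it suffices to show $Q\in\mathcal{Q}_4$. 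Write $Q=x_0Q_0+x_1Q_1$ with $Q_0,Q_1\in(x_0,x_1)^{r-2}$ homogeneous. Reducing $\phi_{\pi\hat s}(Q)=0$ modulo $\bar a$ in $\bar A/\bar a\bar A\cong S$, which is a domain by Cor.~\ref{prime-ideal}, and using that the image of $\bar s_1$ there is nonzero (because $\bar A_{(2,1)}=0$, being void of monomials in the $\bar s_i$, forces $\bar s_1\notin\bar a\bar A$), gives $\phi_{\pi\hat s}(Q_1)\in\bar a\bar A$; choosing a homogeneous $W$ with $\phi_{\pi\hat s}(Q_1)=\bar a\,\phi_{\pi\hat s}(W)$, the element $Q_1-x_0W$ lies in $\krn\phi_{\pi\hat s}$ at $x$-degree $r-1$, so $Q_1\equiv x_0W\pmod{\mathcal{Q}_4}$ by induction. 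Then $Q\equiv x_0(Q_0+x_1W)\pmod{\mathcal{Q}_4}$, and since $\bar A$ is a domain and $\bar a\ne 0$ we get $\phi_{\pi\hat s}(Q_0+x_1W)=0$ with $Q_0+x_1W$ of $x$-degree $r-1$, so by induction $Q_0+x_1W\in\mathcal{Q}_4$; hence $Q\in\mathcal{Q}_4$ and $P\in\mathcal{Q}_4$. The stated consequences then follow from the first paragraph.

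I expect the forward inclusion to be essentially bookkeeping once Lem.~\ref{mini}(b) and Lem.~\ref{ker-delta} are in hand. The substance will be the reverse inclusion, where the induction must be arranged so that peeling one factor $x_0$ off $Q$ lands one back inside $(x_0,x_1)^{r-2}$ at strictly smaller $x$-degree. The hard part will be getting Lem.~\ref{Q-ideal}(c), the primality of $J_0=aA+hA$ (equivalently $\bar A/\bar a\bar A$ a domain), and the vanishing $\bar A_{(2,1)}=0$ to cooperate so that this descent actually closes up; that is the crux of the argument.
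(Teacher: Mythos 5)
Your proposal is correct, and while its skeleton (two inclusions, then induction on the $x$-degree $r$ via Lem.~\ref{Q-ideal}(c)) matches the paper's, the crux step of the reverse inclusion is handled by a genuinely different mechanism. The forward inclusion is essentially the paper's Lem.~\ref{lemma2}(a): there too one pushes $\phi_{\pi\hat{s}}(\theta_n^{(j)})$ into $\krn\delta=k[\bar{a},\bar{F},\bar{G}]$ and kills it by a bidegree count (the paper phrases it as the quadratic part of $k[\bar{a},\bar{F},\bar{G}]$ lying in $\Omega_{(2,0)}+\Omega_{(2,2)}$, which is your constraint $e_1=r-3s\ge 0$ specialized to $r=2$; your detour through Lem.~\ref{mini}(b) for $j=0$ is harmless and in fact redundant). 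For the reverse inclusion the paper first reduces to the distinguished cable $\hat{\sigma}$ of Thm.~\ref{unique} (legitimate because $\pi\hat{t}=\pi\hat{s}$ for all cables in $\mathcal{S}_a$), writes a putative kernel element not divisible by $x_0$ as $x_0\zeta_0+cx_1^{r-1}x_{s-r+1}$ with $c\ne 0$, and uses the normal form $n!\sigma_n\equiv -nxv^{n-1}\ ({\rm mod}\ aB)$ to specialize at $a=0$ and force a contradiction; only the factor $x_0$ is then peeled off. You instead keep an arbitrary $\hat{s}$, split $Q=x_0Q_0+x_1Q_1$, and use the primality of $J_0=aA+hA$ (Cor.~\ref{prime-ideal}) together with $\bar{s}_1\notin\bar{a}\bar{A}$ to get $\phi_{\pi\hat{s}}(Q_1)\in\bar{a}\bar{A}$, which lets you replace $Q_1$ by an $x_0$-multiple modulo the kernel at $x$-degree $r-1$ and then cancel $\bar{a}$ to descend on the $Q_0$-part as well. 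The two arguments rest on the same underlying computation ($A/aA\cong S[h(0)]$, which is exactly how Cor.~\ref{prime-ideal} is proved), but yours routes it through that already-established corollary rather than redoing the $a=0$ specialization, and it avoids the reduction to $\hat{\sigma}$ entirely; what the paper's version buys is an explicit contradiction in $k[x,y,z,v]$, while yours buys a uniform two-stage descent valid verbatim for every $\hat{s}\in\mathcal{S}_a$.
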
 

\subsection{Quadratic Relations} 
Let $\hat{s}\in\mathcal{S}_a$ be given, and let $\{\hat{\theta}_n\}$ be a $\Delta$-basis for $\Omega_2$, where $\hat{\theta}_n = (\theta_n^{(j)})$ for  given $n$.

\begin{lemma}\label{lemma2} 
\begin{itemize}
\item [{\bf (a)}] If $n\ge 4$ is even, then $\theta_n^{(j)}\in\krn\phi_{\pi\hat{s}}$ holds for any $j\ge 0$. 
\smallskip
\item [{\bf (b)}] $\langle \theta _0^{(j)},\theta _2^{(j-2)}\rangle \cap \krn\phi_{\pi\hat{s}}=\{ 0\} $ holds for every $j\ge 0$, 
where $\theta _2^{(j-2)}=0$ if $j=0,1$. 
\end{itemize}
\end{lemma}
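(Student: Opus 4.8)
The plan is to exploit the $\Z^2$-grading together with the collapse $\bar h=0$ in $\bar A$, which by {\it Lem.~\ref{ker-delta}} forces $\krn\delta=k[\bar a,\bar F,\bar G]$ and kills all the "high" one-dimensional pieces of $\krn\partial=R\cap A$ listed in {\it Lem.~\ref{mini}}. Throughout I use that $\phi_{\pi\hat s}$ is graded with $\phi_{\pi\hat s}(\Omega_{(r,s)})\subset\bar A_{(2s+r,s)}$, that $\delta\phi_{\pi\hat s}=\phi_{\pi\hat s}\Delta$, and that $\bar A$ is an integral domain with $\bar a\ne 0$, $\bar F\ne 0$ (the latter two by degree considerations, since $h$ has degree $(12,6)$).

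For part (a), I would first record the precise consequence of {\it Lem.~\ref{mini}(b)} that is needed: for every integer $m\ge 3$,
\[
\krn\delta\cap\bar A_{(2m+2,m)}=\pi\!\left(R\cap A_{(2m+2,m)}\right)=\{0\}.
\]
The first equality holds because $\krn\delta=\pi(\krn\partial)$ is graded; for the vanishing, write $m=6e+\ell$ with $0\le\ell\le 5$: by {\it Lem.~\ref{mini}(b)} the space $R\cap A_{(2m+2,m)}$ is zero unless $\ell\in\{0,2\}$, and in those cases its generator is $a^2h^e$ or $Fh^e$ with $e\ge 1$ (because $m\ge 3$), hence maps to $0$ under $\pi$ since $\bar h=0$. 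Now fix even $n\ge 4$ and prove $\phi_{\pi\hat s}(\theta_n^{(j)})=0$ by induction on $j$. For $j=0$, $\Delta\theta_n^{(0)}=0$ gives $\delta\phi_{\pi\hat s}(\theta_n^{(0)})=\phi_{\pi\hat s}(\Delta\theta_n^{(0)})=0$, so $\phi_{\pi\hat s}(\theta_n^{(0)})\in\krn\delta\cap\bar A_{(2n+2,n)}=\{0\}$. For $j\ge 1$ the induction hypothesis gives $\phi_{\pi\hat s}(\theta_n^{(j-1)})=0$, so $\delta\phi_{\pi\hat s}(\theta_n^{(j)})=\phi_{\pi\hat s}(\theta_n^{(j-1)})=0$, whence $\phi_{\pi\hat s}(\theta_n^{(j)})\in\krn\delta\cap\bar A_{(2(n+j)+2,\,n+j)}=\{0\}$ since $n+j\ge 4$.

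For part (b), the key observation is that the $\delta$-degree separates the two cable vertices. Since $\Delta^j\theta_0^{(j)}=\theta_0^{(0)}=x_0^2$ and $\phi_{\pi\hat s}(x_0^2)=\bar s_0^2=\bar a^2\ne 0$, while $\Delta^{j+1}\theta_0^{(j)}=\Delta x_0^2=0$, one gets $\deg_\delta\phi_{\pi\hat s}(\theta_0^{(j)})=j$ for every $j\ge 0$; in particular $\phi_{\pi\hat s}(\theta_0^{(j)})\ne 0$, which settles the cases $j=0,1$ where $\theta_2^{(j-2)}=0$. For $j\ge 2$, using $\theta_2^{(0)}=2x_0x_2-x_1^2\in\krn\Delta$ we have $\Delta^{j-1}\theta_2^{(j-2)}=\Delta\theta_2^{(0)}=0$ while $\Delta^{j-2}\theta_2^{(j-2)}=\theta_2^{(0)}$ maps to $\bar F\ne 0$, so $\deg_\delta\phi_{\pi\hat s}(\theta_2^{(j-2)})=j-2<j$. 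Then if $c_1\theta_0^{(j)}+c_2\theta_2^{(j-2)}\in\krn\phi_{\pi\hat s}$ with $c_1,c_2\in k$, applying $\delta^j$ annihilates the $\theta_2$-term and yields $c_1\bar a^2=0$, so $c_1=0$; then $c_2\phi_{\pi\hat s}(\theta_2^{(j-2)})=0$ forces $c_2=0$. Hence $\langle\theta_0^{(j)},\theta_2^{(j-2)}\rangle\cap\krn\phi_{\pi\hat s}=\{0\}$.

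The main obstacle is the bookkeeping in part (a): one must see that the only graded pieces of $\krn\partial$ that could obstruct the vanishing of $\phi_{\pi\hat s}(\theta_n^{(j)})$ are those indexed by $h^0$, and that these occur only in degrees $m=0$ (giving $\bar a^2$) and $m=2$ (giving $\bar F$), both excluded since $n\ge 4$. This is exactly where {\it Lem.~\ref{mini}} and $\bar h=0$ do the work; once that is in hand, both parts are formal consequences of the grading and of $\delta\phi_{\pi\hat s}=\phi_{\pi\hat s}\Delta$. A secondary subtlety in part (b) is verifying that the two $\delta$-degrees genuinely differ for all $j\ge 2$, which rests on $\theta_2^{(0)}\in\krn\Delta$, so that the $\theta_2$-cable "tops out" two steps before the $\theta_0$-cable.
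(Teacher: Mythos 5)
Your proof is correct, and its backbone is the same as the paper's: push $\phi_{\pi\hat{s}}(\theta_n^{(j)})$ into $\krn\delta$ via $\delta\phi_{\pi\hat{s}}=\phi_{\pi\hat{s}}\Delta$ and then kill it by a graded degree count. The two places where you diverge are worth noting. In part (a), the paper does not invoke {\it Lem.~\ref{mini}(b)} at this point; instead it uses $\krn\delta=k[\bar{a},\bar{F},\bar{G}]$ from {\it Lem.~\ref{ker-delta}} together with the identities (\ref{FG}) to lift $\phi_{\pi\hat{s}}(\theta_n^{(0)})$ back to an element of $k[x_0,\,2x_0x_2-x_1^2,\,3x_0^2x_3-3x_0x_1x_2+x_1^3]\cap\Omega_2=k\cdot x_0^2+k\cdot(2x_0x_2-x_1^2)\subset\Omega_{(2,0)}+\Omega_{(2,2)}$, which is incompatible with degree $(2,n)$ for $n\ge 4$. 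Your route through {\it Lem.~\ref{mini}(b)} plus $\bar{h}=0$ reaches the same vanishing of $\krn\delta\cap\bar{A}_{(2m+2,m)}$ for $m\ge 3$ and is, if anything, more self-contained, since it needs only the graded pieces of $R\cap A$ rather than a description of the quadratic part of a polynomial subring. In part (b), the paper argues by a minimal-counterexample descent (apply $\delta$ once and contradict minimality of $j$), which forces it to compute the base case $j=2$ explicitly by checking that $\langle as_2,s_1^2\rangle$ is two-dimensional and meets $hB$ trivially; your observation that $\delta^j$ annihilates $\phi_{\pi\hat{s}}(\theta_2^{(j-2)})$ but sends $\phi_{\pi\hat{s}}(\theta_0^{(j)})$ to $\bar{a}^2\ne 0$ decouples the two coefficients in one stroke and handles all $j\ge 2$ uniformly, with no separate base case. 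Both treatments are valid; yours trades the explicit $j=2$ computation for the (easily verified) facts $\bar{a}^2\ne 0$ and $\bar{F}\ne 0$.
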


\begin{proof} (a) Fixing $n\ge 4$, we proceed by induction on $j$ to show that $\theta_n^{(j)}\in\krn\phi_{\pi\hat{s}}$ for each $j\ge 0$. 
We have:
\[
\delta\phi_{\pi\hat{s}} (\theta_n^{(0)})=\phi_{\pi\hat{s}}\Delta (\theta_n^{(0)})=0 \quad\Rightarrow\quad \phi_{\pi\hat{s}} (\theta_n^{(0)})\in\krn\delta = k[\bar{a},\bar{F},\bar{G}]
\]
From line (\ref{FG}) in {\it Remark~\ref{list}}, 
we have that $\bar{F}=\phi_{\pi\hat{s}} (2x_0x_2-x_1^2)$ and $-\bar{G}=\phi_{\pi\hat{s}} (3x_0^2x_3-3x_0x_1x_2+x_1^3)$. 
Therefore, there exists $P\in\krn\phi_{\pi\hat{s}}\cap\Omega_{(2,n)}$ such that:
\[
\theta_n^{(0)}-P\in 
k[x_0,2x_0x_2-x_1^2,3x_0^2x_3-3x_0x_1x_2+x_1^3]\cap \Omega _2
=k\cdot x_0^2+k\cdot (2x_0x_2-x_1^2)
\subset \Omega_{(2,0)}+\Omega_{(2,2)}
\]
Since $\theta _n^{(0)},P\in \Omega _{(2,n)}$ and $n\ge 4$, 
we conclude $\theta_n^{(0)}=P\in\krn\phi_{\pi\hat{s}}$. 
This gives the basis for induction. 

Assume that $\theta_n^{(j-1)}\in\krn\phi_{\pi\hat{s}}$ for $j\ge 1$. Then:
\[
0=\phi_{\pi\hat{s}} (\theta_n^{(j-1)}) = \phi_{\pi\hat{s}}\Delta (\theta_n^{(j)}) = \delta\phi_{\pi\hat{s}} (\theta_n^{(j)}) \quad\Rightarrow\quad \phi_{\pi\hat{s}} (\theta_n^{(j)}) \in\krn\delta
\]
Since $\theta_n^{(j)}\in \Omega _{(2,n+j)}$, 
we conclude as above that 
$\theta_n^{(j)}\in\krn\phi_{\pi\hat{s}}$. 
This proves part (a).

(b) 
Since $\theta _0^{(j)}=x_0x_j\not\in\krn\phi_{\pi\hat{s}}$ 
for $j=0,1$, 
the assertion holds for $j=0,1$. 
By {\it Lem.~\ref{lemma1}(a)}, 
we have:
\[ 
\langle \phi _{\hat{s}}(\theta _0^{(2)}),
\phi _{\hat{s}}(\theta _2^{(0)})\rangle 
=\phi _{\hat{s}}(\Omega _{(2,2)})
=\phi _{\hat{s}}(\langle \beta _0^{(2)},\beta _2^{(0)}\rangle )
=\langle as_2,s_1^2\rangle 
\]
Since $\dim \langle as_2,s_1^2\rangle =2$ 
and 
$\langle as_2,s_1^2\rangle \cap hB
\subset B_{(6,2)}\cap hB=\{ 0\} $, 
the assertion also holds for $j=2$. 
We prove the case $j\ge 3$ by contradiction. 
Let $j\ge 3$ be the smallest integer 
for which there exists $(0,0)\ne (\alpha ,\beta )\in k^2$ 
such that 
$f:=\alpha \theta _0^{(j)}+\beta \theta _2^{(j-2)}
\in\krn\phi_{\pi\hat{s}}$. 
Then
\[
0=\phi_{\pi\hat{s}} (f) \quad\Rightarrow\quad 
0=\delta\phi_{\pi\hat{s}} (f) 
= \phi_{\pi\hat{s}}\Delta (f) 
= \phi_{\pi\hat{s}}
(\alpha \theta _0^{(j-1)}+\beta \theta _2^{(j-3)})
\]
and so 
$\alpha \theta _0^{(j-1)}+\beta \theta _2^{(j-3)}
\in\krn\phi_{\pi\hat{s}}$. 
This contradicts the minimality of $j$, 
proving part (b). 
\end{proof}

Combining {\it Lem.~\ref{lemma1}} and {\it Lem.~\ref{lemma2}}, we obtain:
\begin{lemma}\label{lemma3} 
\begin{itemize}
\item [{\bf (a)}] Given $j\ge 4$, the set $\{\theta_{2i}^{(j-2i)}\,\vert\, 2\le i\le j/2\}$ is a basis for $\Omega_{(2,j)}\cap\krn\phi_{\pi\hat{s}}$.
\medskip
\item [{\bf (b)}] The vertices of $\hat{\theta}_n$ $(n\in 2\N$, $n\ge 4)$ form a basis for $\Omega_2\cap\krn\phi_{\pi\hat{s}}$.
\end{itemize}
\end{lemma}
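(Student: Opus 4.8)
The plan is to deduce both parts directly from Lemma~\ref{lemma1}(a), Lemma~\ref{lemma2}(a) and Lemma~\ref{lemma2}(b), working one graded piece $\Omega_{(2,j)}$ at a time and exploiting that $\krn\phi_{\pi\hat{s}}$ is a homogeneous ideal of $\Omega$.

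For part (a), I would fix $j\ge 4$ and start from the basis $\{\theta_{2i}^{(j-2i)}\mid 0\le i\le j/2\}$ of $\Omega_{(2,j)}$ supplied by Lemma~\ref{lemma1}(a). The terms with $i\ge 2$ are vertices of the cables $\hat{\theta}_{2i}$ with $2i\ge 4$, hence lie in $\krn\phi_{\pi\hat{s}}$ by Lemma~\ref{lemma2}(a); so their span $V$ is contained in $\Omega_{(2,j)}\cap\krn\phi_{\pi\hat{s}}$. For the reverse inclusion I would take $g$ in the intersection, expand $g=\sum_{0\le i\le j/2}c_i\theta_{2i}^{(j-2i)}$, and subtract off the $i\ge 2$ part (which lies in the kernel) to conclude $c_0\theta_0^{(j)}+c_1\theta_2^{(j-2)}\in\langle\theta_0^{(j)},\theta_2^{(j-2)}\rangle\cap\krn\phi_{\pi\hat{s}}=\{0\}$ by Lemma~\ref{lemma2}(b). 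Since $\theta_0^{(j)}$ and $\theta_2^{(j-2)}$ are members of a basis of $\Omega_{(2,j)}$, they are linearly independent, forcing $c_0=c_1=0$, so $g\in V$. Hence $\Omega_{(2,j)}\cap\krn\phi_{\pi\hat{s}}=V$, and the spanning set $\{\theta_{2i}^{(j-2i)}\mid 2\le i\le j/2\}$ is a basis because it is a subset of a basis of $\Omega_{(2,j)}$.

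For part (b), I would use homogeneity of $\krn\phi_{\pi\hat{s}}$ to write $\Omega_2\cap\krn\phi_{\pi\hat{s}}=\bigoplus_{j\ge 0}\bigl(\Omega_{(2,j)}\cap\krn\phi_{\pi\hat{s}}\bigr)$. For $0\le j\le 3$ the space $\Omega_{(2,j)}$ is spanned by $\theta_0^{(j)}$ and, when $j\ge 2$, by $\theta_2^{(j-2)}$, so Lemma~\ref{lemma2}(b) forces $\Omega_{(2,j)}\cap\krn\phi_{\pi\hat{s}}=\{0\}$. For $j\ge 4$, part (a) gives the basis $\{\theta_{2i}^{(j-2i)}\mid 2\le i\le j/2\}$. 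Re-indexing by $n=2i$ and $m=j-2i$, the union of these bases over all even $j\ge 4$ is exactly the set of vertices $\theta_n^{(m)}$ with $n\in 2\N$, $n\ge 4$, $m\ge 0$, i.e. the vertices of the cables $\hat{\theta}_n$ with $n\ge 4$; linear independence of the whole collection follows from independence within each graded piece together with homogeneity.

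I do not expect a genuine obstacle here: the substance is already contained in Lemmas~\ref{lemma1} and \ref{lemma2}, and this proof is essentially an organizational step. The only point requiring mild care is the re-indexing in part (b) --- verifying that as $(i,j)$ runs over $i\ge 2$, $j$ even, $j\ge 2i$, the pair $(2i,\,j-2i)$ enumerates $\{(n,m):n\in 2\N,\ n\ge 4,\ m\ge 0\}$ bijectively, so that each vertex of each cable $\hat{\theta}_n$ ($n\ge 4$) appears exactly once and no extra linear relations are introduced.
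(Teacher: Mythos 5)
Your proposal matches the paper exactly: the paper gives no written proof, stating only that the lemma follows by ``combining \emph{Lem.~\ref{lemma1}} and \emph{Lem.~\ref{lemma2}},'' and your argument is precisely the intended combination (Lemma~\ref{lemma1}(a) for the graded basis, Lemma~\ref{lemma2}(a) for the inclusion of the $i\ge 2$ vertices in the kernel, Lemma~\ref{lemma2}(b) to kill the $i=0,1$ coefficients, and homogeneity of $\krn\phi_{\pi\hat{s}}$ to assemble part (b)). One correction to the re-indexing you flagged in part (b): the union must be taken over \emph{all} integers $j\ge 4$, not only even $j$ --- with $n=2i$ and $m=j-2i$, restricting to even $j$ forces $m$ even and so omits every vertex $\theta_n^{(m)}$ with $m$ odd; letting $j$ range over all integers $\ge 4$ (with $2\le i\le j/2$) gives the bijection onto $\{(n,m): n\in 2\N,\ n\ge 4,\ m\ge 0\}$ that you want.
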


\subsection{Proof of Theorem~\ref{Abar-relations}} Note that, by {\it Cor.~\ref{dimension1} (a)}, if $\hat{t}\in \mathcal{S}_a$, then $\pi \hat{t}=\pi \hat{s}$. So there is no loss in generality in assuming that $\hat{s}=\hat{\sigma}$, where $\hat{\sigma}$ is the $\partial$-cable specified in {\it Thm.~\ref{unique}}.

By {\it Lem.~\ref{lemma3}(b)}, 
the ideal generated by $\Omega_2\cap\krn\phi_{\pi\hat{\sigma}}$ equals $\mathcal{Q}_4$. 
Since $\phi_{\pi\hat{\sigma}}$ 
is a homogeneous ideal of $\Omega $, 
it suffices to show: 
\[
\Omega_{(r,s)}\cap\krn\phi_{\pi\hat{\sigma}} \subset\mathcal{Q}_4 \quad (r,s\ge 0)
\]
Let non-zero $\zeta\in\Omega_{(r,s)}\cap\krn\phi_{\pi\hat{\sigma}}$ be given ($r,s\ge 0$). 
Then $r\ge 2$. 
We prove $\zeta \in \mathcal{Q}_4$ by induction on $r$, 
where the case $r=2$ holds as mentioned. 
Assume that $r\ge 3$. 
By {\it Thm.~\ref{Q-ideal}(c)} we have:
\[
\Omega_r\subset (x_0,x_1)^{r-1}+\mathcal{Q}_4
\]
So it suffices to assume that
$\zeta\in (x_0,x_1)^{r-1}$. By degree considerations, we see that $\zeta$ is a linear combination of the monomials:
\[
x_0^{r-i-1}x_1^ix_{s-i} \quad {\rm such \,\, that}\quad r-i-1, i, s-i \ge 0
\]
Suppose that $x_0$ does not divide $\zeta$. Then $s-r+1\ge 1$, and there exist $\zeta_0\in\Omega_{r-1}$ and non-zero $c\in k$ with $\zeta =x_0\zeta_0 + cx_1^{r-1}x_{s-r+1}$.
Since $\zeta\in\krn\phi_{\pi\hat{\sigma}}$, we see that $\phi_{\hat{\sigma}}(\zeta )\in hA$, which implies that, for some $q\in A$:
\begin{equation}\label{mod}
c\sigma_1^{r-1}\sigma_{s-r+1}=hq-a\phi_{\hat{\sigma}}(\zeta_0)
\end{equation}
By {\it Thm.~\ref{unique}}, we have that $n!\sigma_n\equiv -nxv^{n-1}\, ({\rm mod}\, aB)$ for each $n\ge 1$. From (\ref{mod}), it follows that:
\[
\frac{c}{(s-r)!}(-x)^rv^{s-r}=3x^2(2xz-y^2)\cdot q|_{a=0}
\]
Since $c\ne 0$, this is a contradiction. 
Therefore, $x_0$ divides $\zeta$. If $\zeta =x_0\zeta_0$ for $\zeta_0\in\Omega$, 
then $\zeta_0\in
\Omega _{(r-1,s)}\cap \krn\phi_{\pi\hat{\sigma}}$. 
We conclude by induction on $r$ that $\zeta_0\in\mathcal{Q}_4$. Therefore, $\zeta\in\mathcal{Q}_4$. 
This completes the proof of {\it Thm.~\ref{Abar-relations}}. $\Box$

\begin{example} {\rm 
Consider the well-known cubic $\Delta$-invariant given by:
\[
\xi = 2x_2^3+9x_0x_3^2-6x_1x_2x_3-12x_0x_2x_4+6x_1^2x_4 \\
\]
Let $\hat{\theta}_4$ be a $\Delta$-cable rooted at $\theta_4^{(0)}$ such that:
\[
\textstyle\theta_4^{(2)} = 5x_1x_5-8x_2x_4+\frac{9}{2}x_3^2\,\, ,\quad \theta_4^{(1)} = 5x_0x_5-3x_1x_4+x_2x_3 \,\, ,\quad \theta_4^{(0)}=2x_0x_4-2x_1x_3+x_2^2
\]
We have:
\[
\textstyle\frac{1}{2}\xi = x_0\theta^{(2)}_4-x_1\theta^{(1)}_4+x_2\theta^{(0)}_4 \in\mathcal{Q}_4
\]
Notice that, in order to express $\xi\in k[x_0,x_1,x_2,x_3,x_4]$ using quadratics in $\mathcal{Q}_4$, it was necessary to use $x_5$. }
\end{example}

\begin{example} {\rm Since the transcendence degree of $\bar{A}$ over $k$ is 3, $\bar{s_0},\bar{s}_1,\bar{s}_2,\bar{s}_3$ are algebraically dependent in $\bar{A}$. 
Their minimal algebraic relation is quartic and can be obtained as follows.

Let $\xi $ be as in the preceding example. 
The $x_4$-coefficient of $\xi$ is $-6\theta_2^{(0)}$, and the $x_4$-coefficient of $\theta_4^{(0)}$ is $2x_0$. Thus, in order to eliminate $x_4$, we take:
\[
\chi := 3\theta_2^{(0)}\theta_4^{(0)} + x_0\xi = 9x_0^2x_3^2-3x_1^2x_2^2+8x_0x_2^3-18x_0x_1x_2x_3+6x_1^3x_3
\]
We see that $\chi\in k[x_0,x_1,x_2,x_3]\cap\krn\Delta\cap\mathcal{Q}_4$. 
Since $\chi $ is irreducible, $\chi $ is a minimal algebraic relation among $\bar{s_0}$, $\bar{s}_1$, $\bar{s}_2$ and $\bar{s}_3$. }
\end{example}

\begin{remark}\label{3-term} {\rm Let $\hat{\eta}_4$ be the $\Delta$-cable belonging to the small $\Delta$-basis for $\Omega_2$. According to {\it Lem.~\ref{lemma2}}, 
$\hat{\eta_4}\subset\krn\phi_{\pi\hat{s}}$ for every $\hat{s}\in\mathcal{S}_a$. Recall that:
\[
\textstyle \eta_4^{(j)}=\frac{(j+1)(j+4)}{2}x_0x_{4+j}-(j+2)x_1x_{3+j}+x_2x_{2+j}
\]
Since we know $\bar{s}_0,\bar{s}_1,\bar{s}_2,\bar{s}_3$ (see {\it Remark~\ref{list}}), we can easily determine the $\delta$-cable $\pi\hat{s}$ using these 3-term recursion relations in $\bar{A}$. }
\end{remark}


\section{Relations in $A$} 

Let $\Omega [t]=\Omega^{[1]}$, and extend the $\Z^2$-grading on $\Omega$ to $\Omega [t]$ by setting $\deg t=(0,6)$. Note that
$\Omega [t]_r=\Omega_r[t]$ for each $r\ge 0$. In addition:
\[
\Omega [t]_{(r,n)} = \Omega_{(r,n)} \oplus t\cdot\Omega_{(r,n-6)}\oplus\cdots\oplus t^e\cdot\Omega_{(r,n-6e)} \,\,\, {\rm where} \,\,\, 0\le n-6e\le 5
\]
Extend $\Delta$ to $\tilde{\Delta}$ on $\Omega [t]$ by setting $\tilde{\Delta}(t)=0$. Then $\tilde{\Delta}$ is homogeneous and $\deg\tilde{\Delta}=(0,-1)$.
Since $\Delta :\Omega _{(r,s)}\to \Omega _{(r,s-1)}$ is surjective for each $r,s\ge 1$, we see that 
$\tilde{\Delta }:\Omega [t]_{(r,n)}\to \Omega [t]_{(r,n-1)}$ is surjective for each $r,n\ge 1$. 
Given $n\ge 0$, define the vector space:
\[
V_n=\Omega[t]_{(2,n)}\cap\krn\tilde{\Delta}
=\Omega [t]_{(2,n)}\cap(\krn\Delta )[t]
\]
Since $\krn\Delta\cap\Omega_{(2,s)}$ equals $\{0\}$ if $s$ is odd, and equals $k\cdot\theta_s^{(0)}$ if $s$ is even as mentioned 
in {\it Sect.~\ref{Delta-Basis}}, 
the reader can easily check that $V_n=\{ 0\}$ if $n$ is odd, and that for $n$ even:
\begin{equation}\label{eq:V_n n:even}
V_n=\langle \theta_n^{(0)},t\theta_{n-6}^{(0)},...,t^e\theta_{n-6e}^{(0)}\rangle \quad {\rm where}\quad n-6e\in\{ 0,2,4\}
\end{equation}
\subsection{The Mapping $\Phi_{\hat{s}}$}
By {\it Thm.~\ref{A-generators}(a)}, 
$\phi_{\hat{s}} :\Omega\to A$ extends to the surjection:
\[
\Phi_{\hat{s}} :\Omega [t]\to A \,\, ,\quad \Phi_{\hat{s}}(t) = h
\]
Note that $\Phi_{\hat{s}}\hat{\Delta }=\partial \Phi_{\hat{s}}$, 
since $\phi_{\hat{s}}\Delta =\partial \phi_{\hat{s}}$, 
$\Phi_{\hat{s}}\hat{\Delta }t=0$ 
and $\partial \Phi_{\hat{s}}t=0$. 

\begin{theorem}\label{A-relations} There exists a set $\{ \hat{\Theta}_4, \hat{\Theta}_6, \hat{\Theta}_8,...\}$ of homogeneous $\tilde{\Delta}$-cables such that $\hat{\Theta}_n$ is rooted in $V_n$ for each $n$ and:
\[
\krn\Phi_{\hat{s}} = (\hat{\Theta}_4, \hat{\Theta}_6, \hat{\Theta}_8,...)
\]
\end{theorem}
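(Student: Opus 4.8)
The plan is to obtain the cables $\hat{\Theta}_n$ by lifting the generators of the ideal $\mathcal{Q}_4=\krn\phi_{\pi\hat{s}}$ (provided by {\it Thm.~\ref{Abar-relations}}) across the substitution $\epsilon\colon\Omega[t]\to\Omega$ with $\epsilon(t)=0$ and $\epsilon(x_i)=x_i$, and then to close the gap with a descent on the $\Z^2$-grading. I write $\hat{\Theta}_n=(\Theta_n^{(j)})$, $\Theta_n^{(j)}\in\Omega[t]_{(2,n+j)}$, and recall that $\Phi_{\hat{s}}\tilde{\Delta}=\partial\Phi_{\hat{s}}$, that $\tilde{\Delta}\colon\Omega[t]_{(2,m)}\to\Omega[t]_{(2,m-1)}$ is onto for $m\ge 1$, that $\krn\partial=R\cap A$, and that $\Phi_{\hat{s}}(\theta_0^{(0)})=a^2$, $\Phi_{\hat{s}}(\theta_2^{(0)})=F$, $\Phi_{\hat{s}}(\theta_4^{(0)})=0$ by (\ref{FG}).

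First I would construct, for each even $n\ge 4$, a homogeneous $\tilde{\Delta}$-cable $\hat{\Theta}_n\subset\krn\Phi_{\hat{s}}$ with $\Theta_n^{(0)}\in V_n$ and $\Theta_n^{(0)}\equiv\theta_n^{(0)}\pmod{t\Omega[t]}$. For the root: {\it Lem.~\ref{lemma2}(a)} gives $\phi_{\hat{s}}(\theta_n^{(0)})\in hB\cap A=hA$, and since $\partial(\phi_{\hat{s}}(\theta_n^{(0)})/h)=\phi_{\hat{s}}(\Delta\theta_n^{(0)})/h=0$, the element $\phi_{\hat{s}}(\theta_n^{(0)})/h$ lies in $\krn\partial\cap A_{(2(n-6)+2,n-6)}=R\cap A_{(2(n-6)+2,n-6)}$; by {\it Lem.~\ref{mini}(b)} this space is $\{0\}$, $k\,a^2h^e$, or $k\,Fh^e$ according as $n\equiv 4,0,2\pmod 6$. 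Thus $\phi_{\hat{s}}(\theta_n^{(0)})$ is $0$, a multiple of $a^2h^{n/6}$, or a multiple of $Fh^{(n-2)/6}$, and I set $\Theta_n^{(0)}$ equal to $\theta_n^{(0)}$ minus the corresponding multiple of $t^{n/6}\theta_0^{(0)}$ or $t^{(n-2)/6}\theta_2^{(0)}$, each of which lies in $V_n$ by (\ref{eq:V_n n:even}) and maps under $\Phi_{\hat{s}}$ to $a^2h^{n/6}$ or $Fh^{(n-2)/6}$. To extend a given $\Theta_n^{(j)}\in\krn\Phi_{\hat{s}}$, I pick any $\tilde{\Delta}$-preimage $\Theta'\in\Omega[t]_{(2,n+j+1)}$; then $\partial\Phi_{\hat{s}}(\Theta')=\Phi_{\hat{s}}(\Theta_n^{(j)})=0$, so $\Phi_{\hat{s}}(\Theta')$ lies in $R\cap A_{(2m+2,m)}$ with $m=n+j+1$, a space that is at most one-dimensional and, when nonzero, spanned by $a^2h^{m/6}$ or $Fh^{(m-2)/6}$. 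Correcting $\Theta'$ by the matching multiple of $t^{m/6}\theta_0^{(0)}$ or $t^{(m-2)/6}\theta_2^{(0)}$ (an element of $V_m\subset\krn\tilde{\Delta}$) makes the $\Phi_{\hat{s}}$-image vanish while preserving $\tilde{\Delta}\Theta'=\Theta_n^{(j)}$; I declare the result to be $\Theta_n^{(j+1)}$. Setting $\mathcal{J}=(\hat{\Theta}_4,\hat{\Theta}_6,\hat{\Theta}_8,\dots)$, a homogeneous ideal, we obtain $\mathcal{J}\subseteq\krn\Phi_{\hat{s}}$.

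Next I would bound $\krn\Phi_{\hat{s}}$ from above. Because $\pi\colon A\to\bar{A}$ kills $h=\Phi_{\hat{s}}(t)$, the composite $\pi\Phi_{\hat{s}}$ agrees on generators with $\phi_{\pi\hat{s}}\circ\epsilon$, so $\krn\Phi_{\hat{s}}\subseteq\krn(\pi\Phi_{\hat{s}})=\epsilon^{-1}(\krn\phi_{\pi\hat{s}})=\epsilon^{-1}(\mathcal{Q}_4)=t\Omega[t]+\mathcal{Q}_4\Omega[t]$ by {\it Thm.~\ref{Abar-relations}}. Since $\mathcal{Q}_4$ is independent of the $\Delta$-basis by {\it Lem.~\ref{Q-ideal}(b)}, and each $\epsilon\hat{\Theta}_n$ ($n\ge 4$) is a homogeneous $\Delta$-cable rooted at $\theta_n^{(0)}$, I may take $\{\hat{\theta}_0,\hat{\theta}_2\}\cup\{\epsilon\hat{\Theta}_n\mid n\ge 4\}$ as the $\Delta$-basis defining $\mathcal{Q}_4$. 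Then each vertex $\theta_n^{(j)}$ ($n\ge 4$) equals $\epsilon\Theta_n^{(j)}$, hence is congruent to $\Theta_n^{(j)}\in\mathcal{J}$ modulo $t\Omega[t]$, so $\mathcal{Q}_4\Omega[t]\subseteq\mathcal{J}+t\Omega[t]$ and therefore $\krn\Phi_{\hat{s}}\subseteq\mathcal{J}+t\Omega[t]$.

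Finally I would promote this to equality. Let $P\in\krn\Phi_{\hat{s}}$ be $\Z^2$-homogeneous of bidegree $(r,s)$; since $\mathcal{J}$ and $t\Omega[t]$ are homogeneous, write $P=J+tP'$ with $J\in\mathcal{J}$ of bidegree $(r,s)$ and $P'\in\Omega[t]$ of bidegree $(r,s-6)$. Then $h\,\Phi_{\hat{s}}(P')=\Phi_{\hat{s}}(tP')=\Phi_{\hat{s}}(P)-\Phi_{\hat{s}}(J)=0$ in the domain $A$, so $P'\in\krn\Phi_{\hat{s}}$. Iterating and using that $\Omega[t]_{(r,s')}=\{0\}$ for $s'<0$, the remainder becomes $0$ after at most $\lfloor s/6\rfloor+1$ steps, whence $P\in\mathcal{J}$. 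Hence $\krn\Phi_{\hat{s}}=\mathcal{J}$, and as $\Phi_{\hat{s}}$ is onto, $A\cong\Omega[t]/\mathcal{J}$. I expect the main obstacle to be the first step: the construction succeeds precisely because the span of $\Phi_{\hat{s}}(V_m)$ matches the one- or zero-dimensional space $R\cap A_{(2m+2,m)}$ described by {\it Lem.~\ref{mini}(b)}, so that the obstruction in $\krn\partial$ arising at every stage can be absorbed by a correction term lying inside $V_m$; carrying out this matching, together with the attendant degree bookkeeping, is the technical heart of the proof.
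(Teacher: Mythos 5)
Your proposal is correct and, in its first two steps, coincides with the paper's own proof: you build the cables $\hat{\Theta}_n$ by the same obstruction-matching argument (the $\Phi_{\hat{s}}$-image of a $\tilde{\Delta}$-preimage lands in $R\cap A_{(2m+2,m)}$, which by Lemma~\ref{mini}(b) is spanned by $a^2h^{m/6}$ or $Fh^{(m-2)/6}$ or vanishes, and which is exactly the image of $V_m$), and you obtain $\krn\Phi_{\hat{s}}\subset(\hat{\Theta}_4,\hat{\Theta}_6,\dots)+(t)$ exactly as the paper does, from $\pi\Phi_{\hat{s}}=\phi_{\pi\hat{s}}\circ\epsilon$ (with $\epsilon$ the specialization $t\mapsto 0$), Theorem~\ref{Abar-relations}, and the basis-independence of $\mathcal{Q}_4$ from Lemma~\ref{Q-ideal}(b). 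The only genuine divergence is the final descent. The paper runs an induction on the second degree $N$ whose base case $0\le N\le 5$ is settled by an explicit computation of $\krn\phi_{\hat{s}}\cap k[x_0,\dots,x_N]_{(r,N)}$, using the algebraic independence of $s_0,\dots,s_3$ and a forward reference to Lemma~\ref{universal}(a) for $N=4,5$. You instead iterate the homogeneous decomposition $P=P_J+tQ$, cancel $h$ in the domain $A$ at each stage, and observe that since $\deg t=(0,6)$ and $\Omega[t]_{(r,s')}=\{0\}$ for $s'<0$, the remainder vanishes after finitely many steps; this makes the base case vacuous and renders the paper's low-degree analysis unnecessary for this theorem. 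Both arguments are valid; yours is slightly cleaner and avoids the forward reference, while the paper's explicit base case has the incidental benefit of exhibiting the low-degree kernel elements concretely.
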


\begin{proof} The proof proceeds in three steps.

{\it Step 1.}  This step constructs a set $\{ \hat{\Theta}_4, \hat{\Theta}_6, \hat{\Theta}_8,...\}$ of homogeneous $\tilde{\Delta}$-cables such that $\hat{\Theta}_n$ is rooted in $V_n$ for each $n$ and
$(\hat{\Theta}_4, \hat{\Theta}_6, \hat{\Theta}_8,...)\subset\krn\Phi_{\hat{s}}$.
For the integer $n\ge 4$, write $n=6e+\ell$ $(e\ge 0, 0\le \ell\le 5)$. 
Given $P\in V_n$, we have:
\[
0=\Phi_{\hat{s}}\tilde{\Delta}(P) = \partial\Phi_{\hat{s}}(P) \quad\Rightarrow\quad \Phi_{\hat{s}}(V_n)\subset \ker \partial =R\cap A
\]
Since 
$\Phi_{\hat{s}}(V_n)\subset 
\Phi_{\hat{s}}(\Omega [t]_{(2,n)})\subset A_{(2n+2,n)}$, 
it follows that 
\begin{equation}\label{inclusion:9.5}
\Phi_{\hat{s}}(V_n)\subset R\cap A_{(2n+2,n)}=
\begin{cases} \langle a^2h^e\rangle & \ell =0\\ \langle Fh^e\rangle & \ell =2\\ \{ 0\} & {\rm otherwise}
\end{cases}
\end{equation}
by {\it Lem.~\ref{mini}(b)}. Now assume $n$ is even. 
In view of (\ref{eq:V_n n:even}), 
there exists $c_n\in k$ such that 
$\Phi_{\hat{s}}(\theta_n^{(0)})
=c_n\Phi_{\hat{s}}(t^e\theta_{\ell}^{(0)})$. 
Note that we may take $c_n=0$ when $\ell =4$. 
Then, we have:
\begin{equation}\label{root}
\Theta_n^{(0)}:=\theta_n^{(0)}-c_nt^e\theta_{\ell}^{(0)}\in\krn\Phi_{\hat{s}} -\{ 0\} ,
\end{equation}
since $e\ge 1$ except when $n=4$. 
Suppose that, for some $j\ge 0$, 
we have constructed 
$\Theta_n^{(0)},...,\Theta_n^{(j)}\in\krn\Phi_{\hat{s}}$ 
which satisfy $\Theta_n^{(i)}\in\Omega [t]_{(2,n+i)}$ and $\tilde{\Delta}\Theta_n^{(i)}=\Theta_n^{(i-1)}$, $1\le i\le j$. 
Since the mapping 
\[
\tilde{\Delta}:\Omega [t]_{(2,n+j+1)}\to\Omega [t]_{(2,n+j)}
\]
is surjective, we may choose $P\in\Omega [t]_{(2,n+j+1)}$ with $\tilde{\Delta}P=\Theta_n^{(j)}$. 
We have:
\[
0=\Phi_{\hat{s}}\Theta_n^{(j)}=\Phi_{\hat{s}}\tilde{\Delta}(P)=\partial\Phi_{\hat{s}}(P)
\quad\Rightarrow\quad 
\Phi_{\hat{s}}(P)\in R\cap A_{(2(n+j+1)+2,n+j+1)}
\]
We again apply the equality in (\ref{inclusion:9.5}). 
In fact, 
if $\ell\in \{ 0,2\} $, 
then $\theta _{n-6e}^{(0)}
=\theta _{\ell }^{(0)}\not\in \ker \phi _{\pi \hat{s}}$ 
by {\it Lem.~\ref{lemma2}(b)}, 
and so $\Phi_{\hat{s}}(t^e\theta _{n-6e}^{(0)})
=h^e\phi _{\hat{s}}(\theta _{n-6e}^{(0)})\ne 0$. 
Thus, as above, 
there exist $\kappa\in k$ and $\epsilon ,l\in\N$ with:
\[
\Theta_n^{(j+1)}:=P-\kappa t^{\epsilon}\theta_l^{(0)} \in\krn\Phi_{\hat{s}}\cap \Omega [t]_{(2,n+j+1)}
\]
where $\kappa =0$ if $n+j+1$ is odd, since $V_{n+j+1}=\{ 0\} $. 
Then, we have $\tilde{\Delta}\Theta_n^{(j+1)}=\Theta_n^{(j)}$, 
since $\tilde{\Delta }(t^{\epsilon}\theta_l^{(0)})=0$. 
Therefore, for each even $n\ge 4$, there exists a homogeneous $\tilde{\Delta}$-cable $\hat{\Theta}_n$ rooted in $V_n$ 
and contained in $\krn\Phi_{\hat{s}}\cap\Omega [t]_2$. 
Note that $\Theta_4^{(j)}=\theta_4^{(j)}$ for $j=0,1$ 
by construction.

{\it Step 2.} By construction, the ideal 
$J:=(\hat{\Theta}_4, \hat{\Theta}_6, \hat{\Theta}_8,...)$ of $\Omega [t]$ is contained in $\krn\Phi_{\hat{s}}$. 
This step shows $\krn\Phi_{\hat{s}}\subset J+(t)$. Define polynomials $H_n^{(j)}\in\Omega_{(2,n+j)}$ ($n\in 2\N$, $n\ge 4$, $j\ge 0$) by $H_n^{(j)}=\Theta_n^{(j)}\vert_{t=0}$. 
Note that, by (\ref{root}), 
we have $H_n^{(0)}=\theta_n^{(0)}\ne 0$. 
Therefore, by {\it Sect.~\ref{D-cables} (xi)}, for each even $n\ge 4$, $\hat{H}_n:=(H_n^{(j)})$ is a homogeneous $\Delta$-cable rooted at $\theta_n^{(0)}$.
By {\it Def.~\ref{def:Q-ideal}(2)} 
and {\it Lem.~\ref{Q-ideal}(b)}, we get 
$$
\mathcal{Q}_4+(t)=
(\hat{H}_4,\hat{H}_6,\hat{H}_8,...)+(t) = 
(\hat{\Theta}_4, \hat{\Theta}_6, \hat{\Theta}_8,...)+(t)
=J+(t)
$$
Consider the map 
$\pi \Phi_{\hat{s}} :\Omega [t]
\stackrel{\Phi_{\hat{s}}}{\to }A\stackrel{\pi }{\to }A/hA$. 
Since $\pi \Phi_{\hat{s}}|_{\Omega }=\phi_{\pi\hat{s}}$, 
we see from {\it Thm.~\ref{Abar-relations}} that:
\[
\krn\Phi_{\hat{s}}\subset\krn\pi\Phi_{\hat{s}}
=\mathcal{Q}_4+(t)=J+(t)
\]

{\it Step 3.} This step shows $J=\krn\Phi_{\hat{s}}$. 
Since $\Phi_{\hat{s}}(\Omega [t]_{(r,s)})\subset A_{(2s+r,s)}$ 
for each $r,s\ge 0$, we see that $\ker \Phi_{\hat{s}}$ 
is a homogeneous ideal of $\Omega [t]$. 
So, given integers $r,N\ge 0$, we show by induction on $N$ that:
\begin{equation}\label{induct-N}
\krn\Phi_{\hat{s}}\cap\Omega[t]_{(r,N)}\subset J
\end{equation}
If $r\le 1$, then $\krn\Phi_{\hat{s}}\cap\Omega[t]_{(r,N)}=\{ 0\}$, so assume $r\ge 2$.

Consider first the case that $0\le N\le 5$. 
In this case, 
$\Omega [t]_{(r,N)}=\Omega _{(r,N)}=k[x_0,\ldots ,x_N]_{(r,N)}$, 
since $\deg t=(0,6)$. 
Let 
\[
P\in \krn\Phi_{\hat{s}}\cap\Omega[t]_{(r,N)} = \krn\phi_{\hat{s}}\cap k[x_0,...,x_N]_{(r,N)}
\]
be given. 
If $N\le 3$ then $P=0$, since $s_0,s_1,s_2,s_3$ are algebraically independent over $k$ ({\it Remark~\ref{list}}). 

Suppose that $N=4$. 
The only monomial in $k[x_0,...,x_4]_{(r,4)}$ in which $x_4$ appears is $x_0^{r-1}x_4$. Therefore, 
noting $\theta _4^{(0)}=2(x_0x_4-x_1x_3)+x_2^2$, 
we have: 
\[
k[x_0,...,x_4]_{(r,4)}=k\cdot x_0^{r-1}x_4\oplus k[x_0,...,x_3]_{(r,4)} = k\cdot x_0^{r-2}\theta_4^{(0)}\oplus k[x_0,...,x_3]_{(r,4)}
\]
So there exists $\lambda\in k$ such that 
$P-\lambda x_0^{r-2}\theta_4^{(0)}\in k[x_0,...,x_3]$. 
Since $\theta_4^{(0)}\in \ker \phi_{\hat{s}}$ 
by {\it Lem.~\ref{universal}(a)} below, 
we get $P-\lambda x_0^{r-2}\theta_4^{(0)}\in \krn\phi_{\hat{s}}\cap k[x_0,...,x_3]=\{ 0\}$. 
Since $\theta_4^{(0)}=\Theta_4^{(0)}\in J$, $P\in J$ in this case.

Suppose that $N=5$. 
The only monomial in $k[x_0,...,x_5]_{(r,5)}$ in which $x_5$ appears is $x_0^{r-1}x_5$. Therefore, 
noting $\theta _4^{(1)}=5x_0x_5-3x_1x_4+x_2x_3$, 
we have: 
\[
k[x_0,...,x_5]_{(r,5)}=k\cdot x_0^{r-1}x_5\oplus k[x_0,...,x_4]_{(r,5)} = k\cdot x_0^{r-2}\theta_4^{(1)}\oplus k[x_0,...,x_4]_{(r,5)}
\]
Since $\theta_4^{(1)}\in \ker \phi_{\hat{s}}$ by {\it Lem.~\ref{universal}(a)} below, 
there exists $\lambda\in k$ such that 
\[
P-\lambda x_0^{r-2}\theta_4^{(1)}\in \krn\phi_{\hat{s}}\cap k[x_0,...,x_4]_{(r,5)}
\]
as above. Similarly, the only monomial in $k[x_0,...,x_4]_{(r+1,5)}$ in which $x_4$ appears is $x_0^{r-1}x_1x_4$. Therefore:
\[
k[x_0,...,x_4]_{(r+1,5)}=k\cdot x_0^{r-1}x_1x_4\oplus k[x_0,...,x_3]_{(r+1,5)} = k\cdot x_0^{r-2}x_1\theta_4^{(0)}\oplus k[x_0,...,x_3]_{(r+1,5)}
\]
So there exists $\mu\in k$ such that:
\[
x_0P-\lambda x_0^{r-1}\theta_4^{(1)}-\mu x_0^{r-2}x_1\theta_4^{(0)}\in \krn\phi_{\hat{s}}\cap k[x_0,...,x_3]=\{ 0\}
\]
If $r=2$, then $\mu x_1\theta_4^{(0)}\in x_0\Omega$ implies $\mu =0$ and $P=\lambda x_0^{r-2}\theta_4^{(1)}$. If $r\ge 3$, then:
\[
P=\lambda x_0^{r-2}\theta_4^{(1)}+\mu x_0^{r-3}x_1\theta_4^{(0)}
\]
In either case, $P\in J$, since $\theta_4^{(1)}=\Theta_4^{(1)}\in J$.
Therefore, the inclusion (\ref{induct-N}) holds when $0\le N\le 5$, which gives the basis for induction. 

Suppose that $N_0$ is an integer such that $N_0\ge 5$ and (\ref{induct-N}) holds for all integers $0\le N\le N_0$. 
Let $P\in\krn\Phi_{\hat{s}}\cap\Omega[t]_{(r,M)}$ be given, 
where $N_0<M\le N_0+6$. 
We show that $P$ is of the form:
\begin{equation}\label{peejay}
 P=P_J+tQ \quad {\rm where}\quad P_J\in J\cap\Omega [t]_{(r,M)} \quad {\rm and}\quad Q\in\Omega[t]_{(r,M-6)}
 \end{equation}
Since $\krn\Phi_{\hat{s}}\subset J+(t)$ by Step 2, 
we may write $P=E+C$ for $E\in J$ and $C\in t\cdot \Omega [t]$. Since $J$ and $t\cdot\Omega [t]$ are homogeneous ideals, each homogeneous summand of $E$ belongs to $J$, and each homogeneous summand of $C$ belongs to $t\cdot\Omega [t]$. Since $P$ is homogeneous, statement (\ref{peejay}) holds.

In addition, since $P_J\in J\subset \ker \Phi_{\hat{s}}$, 
we have:
\[ tQ=P-P_J\in\krn\Phi_{\hat{s}} \quad\Rightarrow\quad 
Q\in\krn\Phi_{\hat{s}}\cap\Omega [t]_{(r,M-6)}
\]
By the inductive hypothesis, $Q\in J$, which implies $P\in J$. Therefore, statement (\ref{induct-N}) holds for all $N\ge 0$. 
This proves $J=\krn\Phi_{\hat{s}}$.
\end{proof}

\subsection{The Kernel of $\phi_{\hat{\sigma}}$} 
The preceding section shows the existence of certain $\tilde{\Delta}$-cables $\hat{\Theta}_n\in\krn\Phi_{\hat{s}}$, but it is unclear how to construct these. In particular, we do not know the constants $c_n\in k$ in line (\ref{root}) when $n=6e$ or $n=6e+2$, although we do know $c_n=0$ when $n=6e+4$. These constants depend on the choice of the cable $\hat{s}$, and if $\hat{s}$ is known, then the cables $\hat{\Theta}_n$ can be constructed explicitly from $\hat{s}$. 
We will show that $\hat{s}$ can be chosen so that $c_n=0$ when $n=6e+2$. This gives a set of kernel elements large enough to define the sequence $s_n$ implicitly. 

Note that one could also find the sequence $s_n$ explicitly: first, calculate the sequence $w_n$ in the proof of {\it Thm.~\ref{cable}} by methods of linear algebra, and then calculate $s_n$ using the Dixmier map $\pi_v$. But the implicit method is clearly more efficient once relations have been established. 

Let $\hat{\sigma}\in \mathcal{S}_a$ be the $\partial$-cable defined in {\it Thm.~\ref{unique}}. 

\begin{lemma}\label{universal} Let $n\in 2\N$, $n\ge 4$, and $\hat{s}\in\mathcal{S}_a$ be given. 
\begin{itemize}
\item [{\bf (a)}] If $n\equiv 4\, ({\rm mod}\, 6)$, then $\theta_n^{(0)},\theta_n^{(1)}\in\krn\phi_{\hat{s}}$ for every $\hat{s}\in\mathcal{S}_a$. 
\medskip
\item [{\bf (b)}] If $n\equiv 2\, ({\rm mod}\, 6)$, then $\theta_n^{(0)},\theta_n^{(1)}\in\krn\phi_{\hat{\sigma}}$. 
\end{itemize}
\end{lemma}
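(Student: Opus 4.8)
\emph{Proof plan.} The plan is to combine the bigrading of $\Omega$ and $A$ with {\it Lem.~\ref{mini}(b)}, exploiting that the image of a $\Delta$-invariant under $\phi_{\hat{s}}$ is a $\partial$-invariant. First I would record the degree bookkeeping: since $\deg x_i=(1,i)$ in $\Omega$ and $\deg s_i=(2i+1,i)$ in $A$, we have $\phi_{\hat{s}}(\Omega_{(r,s)})\subset A_{(2s+r,s)}$; as $\theta_n^{(0)}\in\Omega_{(2,n)}$ and $\theta_n^{(1)}\in\Omega_{(2,n+1)}$ (the latter being uniquely determined, cf.\ {\it \S\ref{Delta-Basis}}), this gives $\phi_{\hat{s}}(\theta_n^{(0)})\in A_{(2n+2,n)}$ and $\phi_{\hat{s}}(\theta_n^{(1)})\in A_{(2(n+1)+2,n+1)}$. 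Next I would note that $\theta_n^{(0)}\in\krn\Delta$ and $\phi_{\hat{s}}\Delta=\partial\phi_{\hat{s}}$ force $\phi_{\hat{s}}(\theta_n^{(0)})\in\krn\partial\cap A_{(2n+2,n)}=R\cap A_{(2n+2,n)}$; and since $\Delta\theta_n^{(1)}=\theta_n^{(0)}$, once $\phi_{\hat{s}}(\theta_n^{(0)})=0$ has been shown it follows that $\phi_{\hat{s}}(\theta_n^{(1)})\in R\cap A_{(2(n+1)+2,n+1)}$. Thus the whole task reduces to proving that the relevant graded pieces of $R\cap A$ vanish.

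For part (a) I would take $n\equiv 4\pmod{6}$. Then {\it Lem.~\ref{mini}(b)} immediately gives $R\cap A_{(2n+2,n)}=\{0\}$, so $\theta_n^{(0)}\in\krn\phi_{\hat{s}}$; and since $n+1\equiv 5\pmod{6}$, {\it Lem.~\ref{mini}(b)} also gives $R\cap A_{(2(n+1)+2,n+1)}=\{0\}$, so $\theta_n^{(1)}\in\krn\phi_{\hat{s}}$. No property of $\hat{s}$ beyond $\hat{s}\in\mathcal{S}_a$ enters, so this holds for every $\hat{s}\in\mathcal{S}_a$.

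For part (b) I would take $n\equiv 2\pmod{6}$ with $n\ge 4$, so $n\ne 2$; write $n=6e+2$. Here $R\cap A_{(2n+2,n)}=\langle Fh^e\rangle\ne\{0\}$, so the vanishing of $\phi_{\hat{\sigma}}(\theta_n^{(0)})$ is not automatic, and this is precisely where the defining congruence of $\hat{\sigma}$ must be used. The argument is: $\phi_{\hat{\sigma}}(\theta_n^{(0)})$ also lies in $\phi_{\hat{\sigma}}(\Omega_{(2,n)})=\langle\sigma_i\sigma_{n-i}\mid 0\le i\le n/2\rangle$, and the remark following {\it Thm.~\ref{unique}} (namely {\it Thm.~\ref{unique}(b)} together with {\it Lem.~\ref{mini}(b)}) gives $R\cap A_{(2n+2,n)}\cap\phi_{\hat{\sigma}}(\Omega_{(2,n)})=\{0\}$ for $n\equiv 2\pmod{6}$, $n\ne 2$; hence $\phi_{\hat{\sigma}}(\theta_n^{(0)})=0$. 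Then $n+1\equiv 3\pmod{6}$, so {\it Lem.~\ref{mini}(b)} gives $R\cap A_{(2(n+1)+2,n+1)}=\{0\}$, whence $\phi_{\hat{\sigma}}(\theta_n^{(1)})=0$ as well.

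The main point requiring care is simply recognizing that $\phi_{\hat{s}}(\theta_n^{(1)})$ is always forced into a graded component $R\cap A_{(2(n+1)+2,n+1)}$ that vanishes --- because under our hypotheses $n+1$ is $\equiv 3$ or $5\pmod{6}$, hence never $\equiv 0$ or $2\pmod{6}$ --- so $\theta_n^{(1)}$ takes care of itself as soon as $\theta_n^{(0)}$ is in the kernel. The only substantive input is {\it Thm.~\ref{unique}(b)} for part (b); once it is invoked there is no remaining obstacle, and everything else is residue-class bookkeeping.
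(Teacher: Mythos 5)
Your proposal is correct and follows essentially the same route as the paper: reduce everything to graded pieces of $\krn\partial=R\cap A$ via $\phi_{\hat{s}}\Delta=\partial\phi_{\hat{s}}$, kill $\theta_n^{(1)}$ automatically because $n+1\equiv 3,5\pmod 6$ forces $R\cap A_{(2n+4,n+1)}=\{0\}$ by {\it Lem.~\ref{mini}(b)}, and in part (b) invoke {\it Thm.~\ref{unique}(b)} (in the form of the remark following it) to rule out a nonzero multiple of $Fh^e$. No gaps.
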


\begin{proof} For both (a) and (b), 
it suffices to show that $\theta_n^{(0)}\in\krn\phi_{\hat{s}}$, 
since 
\[
0=\phi_{\hat{s}} (\theta_n^{(0)}) = \phi_{\hat{s}}\Delta (\theta_n^{(1)}) = \partial\phi_{\hat{s}}(\theta_n^{(1)}) \quad\Rightarrow\quad 
\phi_{\hat{s}}(\theta_n^{(1)} )\in\krn 
\partial |_{A_{(2n+4,n+1)}}
=R\cap A_{(2n+4,n+1)}
=\{ 0\}
\]
by {\it Lem.~\ref{mini}(b)} with $\ell =5,3$. 
If $n\equiv 4\, ({\rm mod}\, 6)$, then inclusion (\ref{inclusion:9.5}) shows that $\theta_n^{(0)}\in\krn\phi_{\hat{s}}$.
This proves part (a). 
For part (b), write $n=6e+2$ for some $e\ge 1$. 
Inclusion (\ref{inclusion:9.5}) shows that:
 \[
 \phi_{\hat{\sigma}}(\theta_n^{(0)})=cFh^e \quad (c\in k)
 \]
By {\it Thm.~\ref{unique}(b)}, it follows that $\phi_{\hat{\sigma}}(\theta_n^{(0)})=0$. This proves part (b).
\end{proof}

For $n\in 2\N$, let $J_n$ be the set of integers $j\ge 3$ such that $n+j\equiv 1\, (\text{mod}\, 6)$. In particular, each $j\in J_n$ is odd. 

Let $\{\hat{\theta}_n\}$  be a $\Delta$-basis for $\Omega_2$. Given $n\in 2\N$ and $j\in\N$ (and $j\ge 1$ if $n=0$), let $\xi(\theta_n^{(j)})\in k$ be the coefficient of $x_1x_{n+j-1}$ in $\theta_n^{(j)}$.
Note that $\xi(\theta_n^{(j)})=0$ if and only if $\theta_n^{(j)}\in k[x_0,x_2,x_3,\ldots ,x_{n+j}]$, since $\theta_n^{(j)}\in\Omega _{(2,n+j)}$. 
Define
\[
\mu (\hat{\theta}_n)=\min\{ j\in J_n\, |\, \xi(\theta_n^{(j)})\ne 0\}
\]
where it is understood that $\mu (\hat{\theta}_n)=\infty$ if $\xi(\theta_n^{(j)})=0$ for all $j\in J_n$. 

\begin{lemma}\label{(P)-constrained equivalence} If $\mu (\hat{\theta}_n)=\infty$, then the following are equivalent.
\begin{itemize}
\smallskip
\item  [{\rm (i)}] $\theta_n^{(j)}\in\krn\phi_{\hat{\sigma}}$ for some $j\ge 0$
\smallskip
\item [{\rm (ii)}] $\theta_n^{(0)}\in\krn\phi_{\hat{\sigma}}$
\smallskip
\item [{\rm (iii)}] $\theta_n^{(j)}\in\krn\phi_{\hat{\sigma}}$ for all $j\ge 0$
\end{itemize}
\end{lemma}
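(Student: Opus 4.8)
The plan is to prove the cycle of implications (iii)$\Rightarrow$(i)$\Rightarrow$(ii)$\Rightarrow$(iii), and the hypothesis $\mu(\hat{\theta}_n)=\infty$ will be needed only for the last of these. The implication (iii)$\Rightarrow$(i) is trivial (take $j=0$). For (i)$\Rightarrow$(ii) I would argue by descending induction on $j$, using the intertwining identity $\phi_{\hat{\sigma}}\Delta=\partial\phi_{\hat{\sigma}}$ recorded in {\it \S\ref{D-cables}(x)}: if $\theta_n^{(j)}\in\krn\phi_{\hat{\sigma}}$ with $j\ge 1$, then $\phi_{\hat{\sigma}}(\theta_n^{(j-1)})=\phi_{\hat{\sigma}}(\Delta\theta_n^{(j)})=\partial\phi_{\hat{\sigma}}(\theta_n^{(j)})=0$, so the membership propagates down to level $0$. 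Before turning to (ii)$\Rightarrow$(iii), I would note that if $\theta_n^{(0)}\notin\krn\phi_{\hat{\sigma}}$ there is nothing to prove; since $\phi_{\hat{\sigma}}(\theta_0^{(0)})=\sigma_0^2=a^2\ne 0$ and $\phi_{\hat{\sigma}}(\theta_2^{(0)})=2\sigma_0\sigma_2-\sigma_1^2=F\ne 0$ by (\ref{FG}), I may assume $n\ge 4$.

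For (ii)$\Rightarrow$(iii) the plan is an ascending induction on $j$, with base case $j=0$ supplied by (ii). In the inductive step I assume $\theta_n^{(j-1)}\in\krn\phi_{\hat{\sigma}}$ ($j\ge 1$) and must show $\theta_n^{(j)}\in\krn\phi_{\hat{\sigma}}$. Writing $m=n+j$, the intertwining identity gives $\partial\phi_{\hat{\sigma}}(\theta_n^{(j)})=\phi_{\hat{\sigma}}(\theta_n^{(j-1)})=0$; since also $\phi_{\hat{\sigma}}(\theta_n^{(j)})\in A_{(2m+2,m)}$ (as $\theta_n^{(j)}\in\Omega_{(2,m)}$), we obtain $\phi_{\hat{\sigma}}(\theta_n^{(j)})\in\krn\partial\cap A_{(2m+2,m)}=R\cap A_{(2m+2,m)}$ (recall $\krn\partial=R\cap A$). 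By {\it Lem.~\ref{mini}(b)} this space is $\{0\}$ unless $m\equiv 0$ or $2\pmod{6}$, so only those two residues require attention. If $m\equiv 2\pmod{6}$, say $m=6e+2$, then {\it Lem.~\ref{mini}(b)} gives $\phi_{\hat{\sigma}}(\theta_n^{(j)})=cFh^e$ for some $c\in k$; but $\phi_{\hat{\sigma}}(\theta_n^{(j)})$ is manifestly a $k$-linear combination of the products $\sigma_i\sigma_{m-i}$, and since $m$ is even with $j\ge 1$ we have $m\ge 6>2$, so {\it Thm.~\ref{unique}(b)} forces $c=0$.

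The case $m\equiv 0\pmod{6}$, say $m=6e$, is the heart of the matter and is the only place $\mu(\hat{\theta}_n)=\infty$ is used. Here {\it Lem.~\ref{mini}(b)} only yields $\phi_{\hat{\sigma}}(\theta_n^{(j)})=c\sigma_0^2h^e$ for some $c\in k$ (using $\sigma_0=a$), which is not visibly zero. The idea is to lift the problem one level. Since $m$ is even, $j$ is even, so $j\ge 2$, whence $j+1\ge 3$ and $n+(j+1)\equiv 1\pmod{6}$, i.e.\ $j+1\in J_n$; the hypothesis $\mu(\hat{\theta}_n)=\infty$ then forces $\xi(\theta_n^{(j+1)})=0$, so $\theta_n^{(j+1)}$ has no $x_1$-term. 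Now $\partial\sigma_1=\sigma_0$ and $\partial\sigma_0=\partial h=0$ give $\partial(\sigma_0\sigma_1h^e)=\sigma_0^2h^e$, hence $\partial\bigl(\phi_{\hat{\sigma}}(\theta_n^{(j+1)})-c\sigma_0\sigma_1h^e\bigr)=\phi_{\hat{\sigma}}(\theta_n^{(j)})-c\sigma_0^2h^e=0$, so $\phi_{\hat{\sigma}}(\theta_n^{(j+1)})-c\sigma_0\sigma_1h^e\in\krn\partial\cap A_{(2(m+1)+2,m+1)}=R\cap A_{(2(m+1)+2,m+1)}=\{0\}$ because $m+1\equiv 1\pmod{6}$ ({\it Lem.~\ref{mini}(b)}). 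Thus $\phi_{\hat{\sigma}}(\theta_n^{(j+1)})=c\sigma_0\sigma_1h^e$. But the absence of $x_1$ from $\theta_n^{(j+1)}$ makes $\phi_{\hat{\sigma}}(\theta_n^{(j+1)})$ a $k$-linear combination of the $\sigma_i\sigma_{(m+1)-i}$ with $i\ne 1$, and since $m+1\ne 1$, {\it Thm.~\ref{unique}(a)} says $\sigma_0\sigma_1h^e$ is not in that span; hence $c=0$. This closes the induction and proves the lemma. The one step I expect to be the real obstacle is recognizing the identity $\sigma_0^2h^e=\partial(\sigma_0\sigma_1h^e)$, which is exactly what transports the obstinate residue $m\equiv 0\pmod{6}$ up to level $j+1$, where $\mu(\hat{\theta}_n)=\infty$ supplies precisely the vanishing of the $x_1$-coefficient that {\it Thm.~\ref{unique}(a)} needs.
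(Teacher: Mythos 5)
Your proposal is correct and follows essentially the same route as the paper's proof: the easy implications via the intertwining identity $\phi_{\hat{\sigma}}\Delta=\partial\phi_{\hat{\sigma}}$, reduction to $n\ge 4$, and then an ascending induction in which {\it Lem.~\ref{mini}(b)} leaves only the residues $0,2\pmod 6$, with the $\equiv 2$ case killed by {\it Thm.~\ref{unique}(b)} and the $\equiv 0$ case lifted one level (via $\partial(\sigma_0\sigma_1h^e)=a^2h^e$ and the injectivity of $\partial$ on the relevant graded piece) to where $\mu(\hat{\theta}_n)=\infty$ removes the $x_1$-coefficient and {\it Thm.~\ref{unique}(a)} applies. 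The only differences are cosmetic (an index shift in the induction and a direct appeal to (\ref{FG}) instead of {\it Lem.~\ref{lemma2}(b)} for excluding $n=0,2$).
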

\begin{proof}
It is clear that (i) $\Leftarrow$ (ii) $\Leftarrow$ (iii). 
We also have (i) $\Rightarrow $ (ii), 
since 
$$
\phi_{\hat{\sigma}}(\theta _n^{(0)})
=\phi_{\hat{\sigma}}(\Delta ^j\theta _n^{(j)})
=\partial ^j\phi_{\hat{\sigma}}(\theta _n^{(j)})=0
$$
We show (ii) $\Rightarrow $ (iii). 
Suppose that $\theta_n^{(0)}\in\krn\phi_{\hat{\sigma}}$, 
noting that $n\ge 4$, 
since $\phi_{\hat{\sigma}}(\theta_0^{(0)})$ and $\phi_{\hat{\sigma}}(\theta_2^{(0)})$ cannot be zero by {\it Lem.~\ref{lemma2}(b)}. 
We prove by induction on $j$ that $\theta_n^{(j)}\in\krn\phi_{\hat{\sigma}}$ for all $j\ge 0$. 

Assume that 
$\theta_n^{(j)}\in\krn\phi_{\hat{\sigma}}$ for some $j\ge 0$. 
Then, 
$\partial \phi_{\hat{\sigma}}(\theta_n^{(j+1)})
=\phi_{\hat{\sigma}}(\Delta \theta_n^{(j+1)})
=\phi_{\hat{\sigma}}(\theta_n^{(j)})=0$. 
Hence, we get 
\[
\phi_{\hat{\sigma}}(\theta_n^{(j+1)})\in 
\krn \partial |_{A_{(2(n+j+1)+2,n+j+1)}}
=R\cap A_{(2(n+j+1)+2,n+j+1)}
\]
Now, suppose that 
$\theta_n^{(j+1)}\not\in\krn\phi_{\hat{\sigma}}$. 
Then, by {\it Lem.~\ref{mini}(b)} 
and the remark after {\it Thm.~\ref{unique}}, 
we have $n+j+1\equiv 0\pmod{6}$ 
and $\phi_{\hat{\sigma}}(\theta_n^{(j+1)})=\lambda a^2h^e$ 
for some $\lambda \in k^*$ and $e\ge 0$. 
Note that 
\[
\partial :{A_{(2(n+j+2)+2,n+j+2)}}\to{A_{(2(n+j+1)+2,n+j+1)}}
\]
is an injection by {\it Lem.~\ref{mini}(b)}, 
since $n+j+2\equiv 1\pmod{6}$. 
Because 
$\partial \phi_{\hat{\sigma}}(\theta_n^{(j+2)})
=\phi_{\hat{\sigma}}(\Delta \theta_n^{(j+2)})
=\phi_{\hat{\sigma}}(\theta_n^{(j+1)})$ and 
$\partial \sigma _0\sigma _1h^e=a^2h^e$, 
it follows that 
$\phi_{\hat{\sigma}}(\theta_n^{(j+2)})=\lambda \sigma _0\sigma _1h^e$. 
By assumption, 
the monomial $x_1x_{n+j+1}$ does not appear in $\theta _n^{(j+2)}$. 
Hence, 
$\theta _n^{(j+2)}$ is a $k$-linear combination 
of $x_ix_{n+j+2-i}$ for $0\le i\le (n+j+2)/2$ with $i\ne 1$. 
This contradicts {\it Thm.~\ref{unique}(a)}. 
Therefore, 
we must have $\theta_n^{(j+1)}\in\krn\phi_{\hat{\sigma}}$. 
It follows by induction that $\theta_n^{(j)}\in\krn\phi_{\hat{\sigma}}$ for all $j\ge 0$. This completes the proof. 
\end{proof}

Combining  {\it Lem.~\ref{universal}} and {\it Lem.~\ref{(P)-constrained equivalence}} gives the following result. 

\begin{lemma}\label{contain} Suppose that $\{\hat{\theta}_n\}$ is a $\Delta$-basis such that $\mu (\hat{\theta}_n)=\infty$ for each $n=6e\pm 2$, $e\ge 1$. Define the $Q$-ideal 
$\mathcal{J}$ by $\mathcal{J}=(\hat{\theta}_n\, |\, n=6e\pm 2, e\ge 1 )$. Then $\mathcal{J}\subset \krn\phi_{\hat{\sigma}}$. 
\end{lemma}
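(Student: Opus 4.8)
The plan is to reduce the statement entirely to the two immediately preceding lemmas, with only a small amount of congruence bookkeeping. Since $\mathcal{J}$ is by definition the ideal of $\Omega$ generated by the vertices of the cables $\hat{\theta}_n$ with $n=6e\pm 2$, $e\ge 1$, and since $\krn\phi_{\hat{\sigma}}$ is an ideal of $\Omega$, it suffices to show that every vertex $\theta_n^{(j)}$ ($j\ge 0$) of each such cable lies in $\krn\phi_{\hat{\sigma}}$.

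So I would fix $n=6e\pm 2$ with $e\ge 1$. The first step is the observation that $\{6e-2,\,6e+2\mid e\ge 1\}$ is exactly the set of integers $n\ge 4$ with $n\equiv 2$ or $n\equiv 4\pmod 6$: indeed $6e-2=6(e-1)+4\equiv 4$ and $6e+2\equiv 2$, and both are $\ge 4$ since $e\ge 1$. Next, I would invoke {\it Lem.~\ref{universal}}: part (a) handles the generators with $n\equiv 4\pmod 6$ and part (b) those with $n\equiv 2\pmod 6$, yielding in both cases $\theta_n^{(0)}\in\krn\phi_{\hat{\sigma}}$ (in fact $\theta_n^{(1)}$ as well, but $\theta_n^{(0)}$ is all that is needed). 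Finally, since the hypothesis of {\it Lem.~\ref{contain}} supplies $\mu(\hat{\theta}_n)=\infty$ for each such $n$, {\it Lem.~\ref{(P)-constrained equivalence}} applies, and its implication (ii)$\Rightarrow$(iii) promotes $\theta_n^{(0)}\in\krn\phi_{\hat{\sigma}}$ to $\theta_n^{(j)}\in\krn\phi_{\hat{\sigma}}$ for all $j\ge 0$. Taking the ideal generated by all these vertices over the admissible $n$ gives $\mathcal{J}\subset\krn\phi_{\hat{\sigma}}$.

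I do not expect any real obstacle here, since the substantive work has already been isolated in {\it Lem.~\ref{universal}} and {\it Lem.~\ref{(P)-constrained equivalence}}; the one point deserving explicit mention is simply that the congruence class hypotheses of {\it Lem.~\ref{universal}(a),(b)} are met by every generator of $\mathcal{J}$, which the first step above records. With that in place the argument is a direct chaining of the two lemmas, so I would keep the write-up short.
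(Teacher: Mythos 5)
Your proposal is correct and is precisely the argument the paper intends: the paper gives no separate proof, stating only that the result follows by combining \emph{Lem.~\ref{universal}} and \emph{Lem.~\ref{(P)-constrained equivalence}}, and your write-up (identify the generators as the cables with even $n\ge 4$, $n\equiv 2$ or $4\pmod 6$; apply \emph{Lem.~\ref{universal}} to get $\theta_n^{(0)}\in\krn\phi_{\hat{\sigma}}$; then use the implication (ii)$\Rightarrow$(iii) of \emph{Lem.~\ref{(P)-constrained equivalence}} under the hypothesis $\mu(\hat{\theta}_n)=\infty$) is exactly that combination, carried out correctly.
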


We next describe a procedure to modify a given $\Delta$-basis $\{\hat{\theta}_n\}$ to obtain a $\Delta$-basis $\{\hat{\psi}_n\}$ for which $\mu (\hat{\psi}_n)=\infty$ for each $n$. 

Given $n\in 2\N$, if $\mu (\hat{\theta_n})=\infty$, set $\hat{\psi}_n=\hat{\theta}_n$.  If $\mu (\hat{\theta}_n)<\infty$, then 
define constants
\[
j=\mu (\hat{\theta}_n)\,\, ,\,\, m=j-1 \quad\text{and}\quad  c=\frac{\xi(\theta_n^{(j)})}{n+j-2}
\]
noting that $j\ge 3$ is odd and $\xi(\theta_{n+j-1}^{(1)})=-(n+j-2)\ne 0$. It follows that:
\[
\mu (\hat{\theta}_n) < \mu  (\hat{\theta}_n+_mc\,\hat{\theta}_{n+m})
\]
If $\mu  (\hat{\theta}_n+_mc\,\hat{\theta}_{n+m})=\infty$, set $\hat{\psi}_n=\hat{\theta}_n+_mc\,\hat{\theta}_{n+m}$. 
If $\mu  (\hat{\theta}_n+_mc\,\hat{\theta}_{n+m})<\infty$, the process can be repeated. Continuing in this way, we construct a strictly increasing sequence $\vec{m}=\{ m_i\}_{i\in I}$ of positive integers, together with sequences $\vec{c}=\{ c_i\}_{i\in I}$ for $c_i\in k^*$ 
and $\vec{s}=\{ \hat{\theta}_{n+m_i}\}_{i\in I}$ such that, if
$\hat{\psi}_n=\lim (\vec{s},\vec{m},\vec{c})$, 
then $\mu (\hat{\psi}_n)=\infty$. 

Note that, with this algorithm, $\{ \hat{\psi}_n\} $ is uniquely determined by $\{ \hat{\theta}_n\} $. 
The resulting $\Delta$-basis $\{\hat{\psi}_n\}$ is the {\it reduction} of $\{\hat{\theta}_n\}$. 

To illustrate, let $\{\hat{\psi}_n\}$ be the reduction of the balanced $\Delta $-basis $\{\hat{\beta}_n\}$.
Assume that $n\equiv 4\pmod{6}$. 
Then $\xi(\beta _n^{(3)})=-\binom{n+2}{3}$, and if
\[
c=-\frac{\binom{n+2}{3}}{n+3-2}=-\frac{n(n+2)}{6}
\]
then the first eight terms of $\hat{\psi}_n$ equal those of $\hat{\beta}_n+_2c\,\hat{\beta}_{n+2}$. 
In particular, we have: 
\begin{equation}\label{psi2}
\psi _n^{(2)} =\beta _n^{(2)}-\dfrac{n(n+2)}{6}\,\beta _{n+2}^{(0)}=\frac{1}{6}\sum_{i=0}^{n+2}(-1)^i\left( 3i(i-1)-n(n+2)\right) x_ix_{n+2-i}\\
\end{equation}
and
\begin{equation}\label{psi3}
\psi _n^{(3)} =\beta _n^{(3)}-\dfrac{n(n+2)}{6}\,\beta _{n+2}^{(1)}=\frac{1}{6}\sum_{i=1}^{n+3}(-1)^{i+1}\left( (i-1)(i-2)-n(n+2)\right) i x_ix_{n+3-i}
\end{equation}
Note that, by {\it Lem.~\ref{contain}}, $\psi _n^{(2)}$ and $\psi _n^{(3)}$ above both belong to $\krn\phi_{\hat{\sigma}}$.

\begin{remark} {\rm The results of this section show that a $\Delta$-basis of the type described in {\it Lem.~\ref{contain}} exists, and therefore 
$\mathcal{J} \subset \krn\phi_{\hat{\sigma}}$ for the associated $Q$-ideal $\mathcal{J}$.
But we do not know if $\mathcal{J} = \krn\phi_{\hat{\sigma}}$.}
\end{remark}

\subsection{The Cable $\hat{\sigma}$} Let $\{\hat{\psi}_n\}$ be the reduction of the balanced $\Delta $-basis $\{\hat{\beta}_n\}$.
The $\Delta$-cables $\hat{\psi}_n$ for $n=6e\pm 2$ $(e\ge 1)$ give us a way to implicitly calculate the $\partial$-cable $\hat{\sigma}$. 
Recall that $\sigma_0,...,\sigma_5$ are uniquely determined and are given in {\it Remark~\ref{list}}. 

\if0
First, recall that $\sigma_0,...,\sigma_5$ are uniquely determined and are given in {\it Remark~\ref{list}}. Suppose that $\sigma_0,...,\sigma_{n-3}$ are known for $n=6e+4$, $e\ge 1$.
By construction of $\{ \hat{\psi}_n\}$ given in the proof of {\it Thm.~\ref{modify}}, we obtain $\sigma_{n-2},...,\sigma_{n+3}$ as follows.
\fi

\begin{theorem}\label{A-constructs} 
For $n\ge 2$, we have: 
\begin{align*}
\sigma _n&
=-\frac{1}{2a}\sum _{i=1}^{n-1}(-1)^i\sigma_i\sigma_{n-i}
&\quad \text{if}\quad n\equiv 2,4\pmod{6}\\
\sigma _n&
=-\dfrac{1}{na}\sum _{i=1}^{n-1}(-1)^{i+1}i\sigma_i\sigma_{n-i}
&\quad \text{if}\quad n\equiv 3,5\pmod{6}\\
\sigma _n&=-\dfrac{1}{n(n+1)a}\sum _{i=1}^{n-1}
(-1)^i\left( 3i(i-1)-n(n-2)\right) \sigma_i\sigma_{n-i}
&\quad \text{if}\quad n\equiv 0\pmod{6}\\
\sigma _n&=-\dfrac{1}{n(n-1)a}\sum _{i=1}^{n-1}
(-1)^{i+1}\left( (i-1)(i-2)-(n-1)(n-3)\right)i\sigma_i\sigma_{n-i}
&\quad \text{if}\quad n\equiv 1\pmod{6}
\end{align*}
\end{theorem}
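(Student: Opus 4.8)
The plan is to obtain each of the four recursions by applying $\phi_{\hat{\sigma}}$ to a suitable quadratic element of $\krn\phi_{\hat{\sigma}}$, where $\phi_{\hat{\sigma}}\colon\Omega\to A$ is the surjection $x_i\mapsto\sigma_i$ furnished by {\it Thm.~\ref{A-generators}(a)}. We use $\sigma_0=a$ throughout, and we argue in the localization $A_a$: since $A$ is a domain, any element of $A_a$ known to equal $\sigma_n$ automatically lies in $A$, so dividing by $a$ causes no difficulty. Because $\sigma_0,\dots,\sigma_3$ are already determined by {\it Remark~\ref{list}}, it suffices to treat $n\ge 4$, and for each such $n$ we choose the relation according to the residue of $n$ modulo $6$.

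Suppose first that $n\equiv 4\pmod 6$, so $n\ge 4$, or $n\equiv 2\pmod 6$, so $n\ge 8$. In either case $\theta_n^{(0)}=\sum_{i=0}^n(-1)^ix_ix_{n-i}$ lies in $\krn\phi_{\hat{\sigma}}$, by {\it Lem.~\ref{universal}(a)} when $n\equiv 4$ and by {\it Lem.~\ref{universal}(b)} when $n\equiv 2$. Applying $\phi_{\hat{\sigma}}$ gives $\sum_{i=0}^n(-1)^i\sigma_i\sigma_{n-i}=0$; since $n$ is even, both the $i=0$ and the $i=n$ term equal $a\sigma_n$, and separating them yields $2a\sigma_n=-\sum_{i=1}^{n-1}(-1)^i\sigma_i\sigma_{n-i}$, the first displayed identity. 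Next suppose $n\equiv 5\pmod 6$, so $n\ge 5$, or $n\equiv 3\pmod 6$, so $n\ge 9$. Then $n-1\ge 4$ and $n-1\equiv 4$ or $n-1\equiv 2$ modulo $6$, so $\theta_{n-1}^{(1)}=\sum_{i=1}^n(-1)^{i+1}ix_ix_{n-i}$ lies in $\krn\phi_{\hat{\sigma}}$ by {\it Lem.~\ref{universal}}. Applying $\phi_{\hat{\sigma}}$ and noting that the $i=n$ term equals $na\sigma_n$ (as $n$ is odd) produces the second identity.

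For the last two classes we use the reduction $\{\hat{\psi}_m\}$ of the balanced $\Delta$-basis, whose vertices $\psi_m^{(2)}$ and $\psi_m^{(3)}$ lie in $\krn\phi_{\hat{\sigma}}$ for $m\equiv 4\pmod 6$ by {\it Lem.~\ref{contain}} (see the remark following line (\ref{psi3})). When $n\equiv 0\pmod 6$, so $n\ge 6$, line (\ref{psi2}) with parameter $m=n-2$ gives the element $\psi_{n-2}^{(2)}=\frac{1}{6}\sum_{i=0}^n(-1)^i(3i(i-1)-n(n-2))x_ix_{n-i}$ of $\krn\phi_{\hat{\sigma}}$; under $\phi_{\hat{\sigma}}$ the $i=0$ and $i=n$ terms contribute $-n(n-2)a\sigma_n$ and $n(2n-1)a\sigma_n$, whose sum is $n(n+1)a\sigma_n$, and solving gives the third identity. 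When $n\equiv 1\pmod 6$, so $n\ge 7$, line (\ref{psi3}) with parameter $m=n-3$ gives $\psi_{n-3}^{(3)}=\frac{1}{6}\sum_{i=1}^n(-1)^{i+1}((i-1)(i-2)-(n-1)(n-3))ix_ix_{n-i}\in\krn\phi_{\hat{\sigma}}$; here the $i=n$ term contributes $n(n-1)a\sigma_n$, giving the fourth identity. In each case the right-hand side involves only $\sigma_1,\dots,\sigma_{n-1}$, so the four identities together form the asserted recursion.

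The coefficient bookkeeping that extracts $a\sigma_n$ from each relation is routine, and the main thing requiring care is the opening move in every case: verifying that the chosen $\theta$ or $\psi$ actually lies in $\krn\phi_{\hat{\sigma}}$. This is where the hypothesis $n\ge 4$ in {\it Lem.~\ref{universal}} and the congruence classes of $n-2$ and $n-3$ enter, and it is also why the recursion operates only for $n\ge 4$: one has $\theta_2^{(0)}\not\in\krn\phi_{\hat{\sigma}}$ (indeed $\phi_{\hat{\sigma}}(\theta_2^{(0)})=F\ne 0$), and likewise $\theta_2^{(1)}\not\in\krn\phi_{\hat{\sigma}}$ by {\it Lem.~\ref{lemma2}(b)}, so $\sigma_2$ and $\sigma_3$ are instead supplied explicitly by {\it Remark~\ref{list}}. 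The cubic-coefficient cases $n\equiv 0,1\pmod 6$ are marginally more involved because the $i=0$ and $i=n$ contributions must be combined, but the arithmetic is elementary.
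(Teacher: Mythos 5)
Your proposal is correct and takes essentially the same route as the paper's own (two-sentence) proof: the first two recursions come from $\phi_{\hat{\sigma}}(\theta_n^{(0)})=0$ and $\phi_{\hat{\sigma}}(\theta_{n-1}^{(1)})=0$ via Lemma~\ref{universal}, and the last two from Lemma~\ref{contain} applied to $\psi_{n-2}^{(2)}$ and $\psi_{n-3}^{(3)}$ in lines (\ref{psi2})--(\ref{psi3}); you have merely written out the coefficient extraction of the $i=0$ and $i=n$ terms, which checks out in all four cases. Your restriction to $n\ge 4$ is in fact necessary, not just convenient: for $n=2$ the first displayed identity would assert $2a\sigma_2=\sigma_1^2$, i.e.\ $F=0$, which is false, so the theorem's stated range ``$n\ge 2$'' should read ``$n\ge 4$'' (consistent with the version of this result given in the Introduction), and your explicit observation that $\phi_{\hat{\sigma}}(\theta_2^{(0)})=F\ne 0$ pinpoints exactly why.
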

\begin{proof}
The first two equalities are equivalent to 
$\phi _{\hat{\sigma }}(\theta _n^{(0)})=0$ and 
$\phi _{\hat{\sigma }}(\theta _{n-1}^{(1)})=0$, 
respectively, 
which follow from {\it Lem.~\ref{universal}}. 
The last two equalities follow from {\it Lem.~\ref{contain}} together with (\ref{psi2}) and (\ref{psi3}).
\end{proof}

To illustrate, the following relations can be used to construct $\sigma_6,...,\sigma_{19}$. 
\medskip
{\small
\begin{eqnarray*}
\psi_4^{(2)} = \beta_4^{(2)}-4\beta_6^{(0)} &=& 7x_0x_6-2x_1x_5-x_2x_4+x_3^2 \\
\psi_4^{(3)} = \beta_4^{(3)}-4\beta_6^{(1)} &=& 7x_0x_7-2x_2x_5+x_3x_4 \\
\psi_8^{(0)} = \beta_8^{(0)} &=& 2x_0x_8-2x_1x_7+2x_2x_6-2x_3x_5+x_4^2 \\
\psi_8^{(1)} = \beta_8^{(1)} &=& 9x_0x_9-7x_1x_8+5x_2x_7-3x_3x_6+x_4x_5 \\
\psi_{10}^{(0)} = \beta_{10}^{(0)} &=& 2x_0x_{10}-2x_1x_9+2x_2x_8-2x_3x_7+2x_4x_6-x_5^2 \\
\psi_{10}^{(1)} = \beta_{10}^{(1)} &=& 11x_0x_{11}-9x_1x_{10}+7x_2x_9-5x_3x_8+3x_4x_7-x_5x_6 \\
\psi_{10}^{(2)} = \beta_{10}^{(2)}-20\beta_{12}^{(0)} &=& 26x_0x_{12}-15x_1x_{11}+6x_2x_{10}+x_3x_9-6x_4x_8+9x_5x_7-5x_6^2 \\
\psi_{10}^{(3)} = \beta_{10}^{(3)}-20\beta_{12}^{(1)} &=& 26x_0x_{13}-15x_2x_{11}+21x_3x_{10}-20x_4x_9+14x_5x_8-5x_6x_7\\
\psi_{14}^{(0)} = \beta_{14}^{(0)} &=& 2x_0x_{14}-2x_1x_{13}+2x_2x_{12}-2x_3x_{11}+2x_4x_{10}-2x_5x_9+2x_6x_8-x_7^2 \\
\psi_{14}^{(1)} = \beta_{14}^{(1)} &=& 15x_0x_{15}-13x_1x_{14}+11x_2x_{13}-9x_3x_{12}+7x_4x_{11}-5x_5x_{10}+3x_6x_9-x_7x_8 \\
\psi_{16}^{(0)} = \beta_{16}^{(0)} &=& 2x_0x_{16}-2x_1x_{15}+2x_2x_{14}-2x_3x_{13}+2x_4x_{12}-2x_5x_{11}+2x_6x_{10}-2x_7x_9+x_8^2 \\
\psi_{16}^{(1)} = \beta_{16}^{(1)} &=& 17x_0x_{17}-15x_1x_{16}+13x_2x_{15}-11x_3x_{14}+9x_4x_{13}-7x_5x_{12}+5x_6x_{11}-3x_7x_{10} \\
                                                            & &  +x_8x_9 \\
\psi_{16}^{(2)} = \beta_{16}^{(2)}-48\beta_{12}^{(0)} &=& 57x_0x_{18}-40x_1x_{17}+25x_2x_{16}-12x_3x_{15}+x_4x_{14}+8x_5x_{13}-25x_6x_{12}+20x_7x_{11} \\
                                                            & &  -23x_8x_{10}+12x_9^2 \\
\psi_{16}^{(3)} = \beta_{16}^{(3)}-48\beta_{12}^{(1)} &=& 57x_0x_{19}-40x_2x_{17}+65x_3x_{16}-77x_4x_{15}+78x_5x_{14}-70x_6x_{13}+55x_7x_{12} \\
                                                            & &  -35x_8x_{11}+12x_9x_{10}
\end{eqnarray*}
}
\begin{remark} {\rm The reader can compare these relations to relations for the sequence $w_n$ given in \cite{Freudenburg.06}. In particular, $w_0,...,w_{13}$ are given on p.162 and p.165. The construction used there is as follows: Given $n=6e-4$ $(e\ge 2)$, 
suppose $w_0,...,w_{6e-5}$ are known. Then $w_{6e-4},...,w_{6e+1}$ are defined by solving certain systems of linear equations, but in the language of cables this amounts to finding 
$\psi_n^{(0)},...,\psi_n^{(5)}$. Our current approach uses the simpler relations:
\[
\psi_n^{(0)}\,\, , \,\, \psi_n^{(1)}\,\, ,\,\, \psi_{n+2}^{(0)}\,\, ,\,\, \psi_{n+2}^{(1)}\,\, ,\,\, \psi_{n+2}^{(2)}\,\, ,\,\, \psi_{n+2}^{(3)}
\]
Note that, if $\psi_n^{(j)}=\sum_{i=0}^nc_{(n,i)}^{(j)}x_ix_{n-i}$, then the coefficient $c_{(n,i)}^{(j)}$ is a polynomial of degree $j$ in $i$. 
Thus, using smaller $j$-values has a big advantage computationally.
However, the reader should note that both methods produce the same sequence $\sigma_n$, by the uniqueness established in {\it Thm.~\ref{unique}}. }
\end{remark}


\section{Roberts' Derivations in Dimension Seven}

In \cite{Roberts.90}, Roberts constructed a family of counterexamples to Hilbert's Fourteenth Problem in the form of subrings $\mathcal{A}_m\subset k^{[7]}$ for integers $m\ge 2$. Although Roberts does not use the language of derivations, the maps he defines are triangular derivations. In this section, we give a description of the ring $\mathcal{A}_2$ as a cable algebra. 

Let $\mathcal{B}=k[X,Y,Z,S,T,U,V]=k^{[7]}$. For $m\ge 2$, the subring 
$\mathcal{A}_m$ is the kernel of the derivation $\mathcal{D}_m$ of $\mathcal{B}$ defined by:
\[
S\to X^{m+1}\,\, ,\,\, T\to Y^{m+1}\,\, ,\,\, U\to Z^{m+1}\,\, ,\,\, V\to (XYZ)^m\,\, ,\,\, X,Y,Z\to 0
\]
Define $H_m\in\mathcal{A}_m$ by $H_m=Y^{m+1}S-X^{m+1}T$. Define an action of the cyclic group $\Z_3=\langle\alpha\rangle$ on $\mathcal{B}$ by:
\[
\alpha (X,Y,Z,S,T,U,V) = (Z,X,Y,U,S,T,V)
\]
Then $\alpha$, $\mathcal{D}_m$ and the partial derivative $\partial/\partial V$ commute pairwise with each other. Therefore, $\alpha$ and $\partial/\partial V$ restrict to $\mathcal{A}_m$. 
We denote the restriction of $\partial/\partial V$ to $\mathcal{A}_m$ by $\delta_m$. 

Let $m\ge 2$ be given. In Lemma~3 of \cite{Roberts.90}, Roberts showed the existence of a sequence in $\mathcal{A}_m$ of the form $XV^i+(\text{terms of lower degree in $V$})$, $i\ge 0$. 
By combining this with homogeneity conditions, he concluded that $\mathcal{A}_m$ is not finitely generated over $k$. Note that, by applying $\alpha$, we also obtain sequences in $\mathcal{A}_m$ of the 
form $YV^i+(\text{terms of lower degree in $V$})$ and $ZV^i+(\text{terms of lower degree in $V$})$ for $i\ge 0$. 
The second author showed the following.

\begin{theorem}[Thm.~3.3 of \cite{Kuroda.04b}]\label{Kuroda04b} 
Given $m\ge 2$, let $I_{(m,X,i)}, I_{(m,Y,i)}, I_{(m,Z,i)}\in\mathcal{A}_m$ $(i\ge 0)$ be sequences of the form:
\begin{eqnarray*}
I_{(m,X,i)}=XV^i+(\text{terms of lower degree in $V$}) \\
I_{(m,Y,i)}=YV^i+(\text{terms of lower degree in $V$}) \\
I_{(m,Z,i)}=ZV^i+(\text{terms of lower degree in $V$})
\end{eqnarray*}
Then:
\[
\mathcal{A}_m=k[\{ H_m,\alpha H_m,\alpha^2 H_m\} \cup \{ I_{(m,W,i)}\mid i\ge 0,W\in \{ X,Y,Z\}\} ]
\]
\end{theorem}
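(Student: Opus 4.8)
The plan is to prove the two inclusions separately. Writing $R:=k[\{H_m,\alpha H_m,\alpha^2 H_m\}\cup\{I_{(m,W,i)}\}]$, the inclusion $R\subseteq\mathcal{A}_m$ is immediate, since each listed generator lies in $\krn\mathcal{D}_m$ by construction (note in particular that $I_{(m,X,0)}=X$, $I_{(m,Y,0)}=Y$, $I_{(m,Z,0)}=Z$, as there are no terms of negative $V$-degree). The substance is the reverse inclusion $\mathcal{A}_m\subseteq R$, which I would organize around localization at $X$ together with a saturation argument, using the local-slice and Dixmier-map machinery of {\it Sect.~2.1}.

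First I would compute the localized kernel. Over $\mathcal{B}_X=\mathcal{B}[X^{-1}]$ the element $s=S/X^{m+1}$ is a slice for $\mathcal{D}_m$, so $\mathcal{B}_X=(\mathcal{A}_m)_X[s]$ and $(\mathcal{A}_m)_X$ is generated by the Dixmier images $\pi_s(X),\dots,\pi_s(V)$. A direct calculation gives $\pi_s(T)=-H_m/X^{m+1}$, $\pi_s(U)=\alpha H_m/X^{m+1}$, and $\pi_s(V)=V-Y^mZ^mS/X=:P$, whence
\[
(\mathcal{A}_m)_X=k[X^{\pm 1},Y,Z,H_m,\alpha H_m,P] ,
\]
where $\alpha^2 H_m$ is recovered from the syzygy $X^{m+1}\alpha^2 H_m+Y^{m+1}\alpha H_m+Z^{m+1}H_m=0$ holding in $\mathcal{B}$. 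I would then check $R_X=(\mathcal{A}_m)_X$: the only generator needing attention is $P$, and since $I_{(m,X,1)}=XV+(\text{lower }V)$ lies in $\mathcal{A}_m$, the element $I_{(m,X,1)}/X\in R_X$ has the same leading $V$-term as $P$, so their difference has $V$-degree $0$ and lies in the $V$-degree-$0$ part $(\mathcal{A}_m^0)_X=k[X^{\pm1},Y,Z,H_m,\alpha H_m]\subseteq R_X$, where $\mathcal{A}_m^0=\mathcal{A}_m\cap k[X,Y,Z,S,T,U]$. Hence $P\in R_X$. Applying $\alpha$ yields $R_Y=(\mathcal{A}_m)_Y$ and $R_Z=(\mathcal{A}_m)_Z$ by symmetry.

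The heart of the proof is the descent from these localizations to $\mathcal{A}_m$ itself, which I would phrase as a saturation statement: if $R\cap X\mathcal{B}=XR$, then $\mathcal{A}_m=R$. Indeed, for homogeneous $f\in\mathcal{A}_m$ we have $f\in(\mathcal{A}_m)_X=R_X$, so $X^af\in R$ with $a\ge 0$ minimal; were $a\ge 1$, saturation would force $X^af\in XR$, i.e.\ $X^{a-1}f\in R$, contradicting minimality, so $a=0$ and $f\in R$. This formulation isolates exactly the role of the "extra" generators: the identity $R_X=(\mathcal{A}_m)_X$ already uses only $X,Y,Z,H_m,\alpha H_m$ and $I_{(m,X,1)}$, so the full families $\{I_{(m,W,i)}\}_{i\ge 0}$ and $\alpha^2 H_m$ are present precisely to make $R$ large enough to be $X$-saturated (equivalently, to make $R/XR\hookrightarrow \mathcal{B}/X\mathcal{B}=k[Y,Z,S,T,U,V]$).

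I expect the saturation $R\cap X\mathcal{B}=XR$ to be the main obstacle, as it carries the genuine combinatorial content of Roberts' example. One route is to equip $\mathcal{B}$ with a sufficiently fine $\Z$-multigrading for which $\mathcal{D}_m$ is homogeneous and $\mathcal{A}_m$ is graded, reduce the saturation to a statement about individual multidegrees, and argue by induction on $V$-degree: the leading $V$-coefficient $c_d$ of any $f\in\mathcal{A}_m$ lies in $\mathcal{A}_m^0=k[X,Y,Z,H_m,\alpha H_m,\alpha^2 H_m]\subseteq R$ (since the top $V$-degree part of $\mathcal{D}_mf=0$ reads $\mathcal{D}_m(c_d)V^d=0$), and the families $I_{(m,W,i)}$ supply enough $R$-elements of each $V$-degree to subtract off $c_dV^d$ and drop the $V$-degree. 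Controlling which leading coefficients are realizable in this way is exactly the obstruction; it is the step Kuroda carries out by fixing a term order, computing the initial algebra $\mathrm{in}(\mathcal{A}_m)$, and verifying the SAGBI condition that the initial terms of the proposed generators generate it. I would either reproduce that initial-algebra computation or, in the spirit of the present paper, replace it by explicit multigraded bookkeeping that makes the $V$-degree induction close.
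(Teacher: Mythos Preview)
The paper does not prove this theorem; it is quoted verbatim as Thm.~3.3 of \cite{Kuroda.04b} and used as a black box in the proof of {\it Thm.~\ref{Roberts}}. There is therefore no ``paper's own proof'' to compare your proposal against.

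That said, your outline is a reasonable reconstruction of the standard localization-plus-saturation strategy, and your Dixmier-map computation of $(\mathcal{A}_m)_X$ is correct. But you should be aware that what you have written is not a proof: you explicitly defer the entire content to the saturation step $R\cap X\mathcal{B}=XR$, and then say you would ``either reproduce that initial-algebra computation'' or do ``explicit multigraded bookkeeping.'' This is precisely where all the work lies. Kuroda's original argument in \cite{Kuroda.04b} establishes the generating set by computing the initial algebra of $\mathcal{A}_m$ for a suitable monomial order and verifying the SAGBI criterion; that computation is nontrivial and is the substance of the theorem. Your $V$-degree induction idea is natural, but the claim that ``the families $I_{(m,W,i)}$ supply enough $R$-elements of each $V$-degree to subtract off $c_dV^d$'' is exactly what needs proof: one must show that every leading coefficient $c_d\in\mathcal{A}_m^0$ that actually arises can be matched by an $R$-element with the same leading term, and this is not automatic (indeed, it fails for the naive smaller generating set, which is why $\mathcal{A}_m$ is not finitely generated). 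You also assert $\mathcal{A}_m^0=k[X,Y,Z,H_m,\alpha H_m,\alpha^2 H_m]$ without justification; this is true but itself requires a short argument.
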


We use this to show:

\begin{theorem}\label{Roberts} There exists an infinite $\delta_2$-cable $\hat{P}$ in $\mathcal{A}_2$ rooted at $X$, and for any such $\hat{P}$ we have:
\[
\mathcal{A}_2 = k[H_2,\alpha H_2,\alpha^2 H_2, \hat{P},\alpha\hat{P},\alpha^2\hat{P}]
\]
\end{theorem}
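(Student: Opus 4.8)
The strategy is to deduce Theorem~\ref{Roberts} from Theorem~\ref{Kuroda04b}, exactly as the statement suggests, by showing that a $\delta_2$-cable $\hat P$ rooted at $X$ exists and that the vertices of $\hat P$, $\alpha\hat P$, $\alpha^2\hat P$ together with $H_2,\alpha H_2,\alpha^2 H_2$ generate the same algebra as the generating set in Theorem~\ref{Kuroda04b}. First I would establish \emph{existence} of $\hat P$. By Lemma~3 of \cite{Roberts.90} there is a sequence $I_{(2,X,i)}\in\mathcal A_2$ with $I_{(2,X,i)}=XV^i+(\text{lower order in }V)$; in particular $I_{(2,X,0)}=X\in\krn\delta_2$ and $I_{(2,X,0)}\ne 0$. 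One checks that $\delta_2 I_{(2,X,i)}=\partial/\partial V\,I_{(2,X,i)}$ equals $I_{(2,X,i-1)}$ \emph{up to an element of} $\krn\delta_2$, so after correcting by kernel elements (and normalizing the leading coefficient using the grading, as in the proof of Theorem~\ref{cable} via a Dixmier-type argument) one produces an honest sequence $P_i\in\mathcal A_2$ with $P_0=X$ and $\delta_2 P_i=P_{i-1}$. Since $\delta_2$ has local slice $V$ with $\delta_2 V=(XYZ)^2\ne 0$, one sees $X\in\delta_2^n\mathcal A_2$ for all $n$ (the sequence $P_i$ witnesses this), so the cable $\hat P=(P_i)$ is infinite; this gives $\hat P\in$ the analogue of $\mathcal S_a$. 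Applying $\alpha$ gives $\alpha\hat P=(\alpha P_i)$, a $\delta_2$-cable rooted at $\alpha X=Z$, and $\alpha^2\hat P$ rooted at $Y$ (using that $\alpha$ commutes with $\partial/\partial V$, hence with $\delta_2$, and that $\alpha$ fixes $V$, so $\alpha$ carries $\delta_2$-cables to $\delta_2$-cables).

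Next, the \emph{``for any such $\hat P$''} clause: if $\hat Q=(Q_i)$ is any infinite $\delta_2$-cable rooted at $X$, I would argue that each $Q_i$ has the form $XV^i+(\text{lower order in }V)$, so that $\hat Q$ is eligible to serve as the $I_{(2,X,\bullet)}$ sequence in Theorem~\ref{Kuroda04b}. This follows by induction on $i$: write $Q_i=L_i(V)$ as a polynomial in $V$ over $\krn\delta_2$-ish data; since $\delta_2 Q_i=Q_{i-1}$ and $\delta_2=\partial/\partial V$, the leading $V$-behavior of $Q_i$ is determined by $Q_{i-1}$ up to an additive term killed by $\partial/\partial V$, and the $\Z$-grading by $V$-degree (or a bigrading analogous to the dimension-five case) pins down the leading coefficient to be $X$ up to scalar; rescaling $\hat Q$ by a nonzero constant (allowed by Lemma~\ref{ops}(a), with the constant in $k\subset\krn\delta_2$) we may assume it is exactly $X$. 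Here I should be a little careful: the cable $\hat Q$ is only determined up to the ambiguity in its vertices coming from $\krn\delta_2$, but what matters is only that $k[\hat Q]$ contains the leading terms $XV^i$ modulo lower order, which is what Theorem~\ref{Kuroda04b} requires.

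With existence in hand and the observation that $\hat P$, $\alpha\hat P$, $\alpha^2\hat P$ furnish valid choices for $I_{(2,X,\bullet)}$, $I_{(2,Z,\bullet)}$, $I_{(2,Y,\bullet)}$ respectively, Theorem~\ref{Kuroda04b} gives
\[
\mathcal A_2 = k[\{H_2,\alpha H_2,\alpha^2 H_2\}\cup\{P_i,\alpha P_i,\alpha^2 P_i\mid i\ge 0\}]
= k[H_2,\alpha H_2,\alpha^2 H_2,\hat P,\alpha\hat P,\alpha^2\hat P],
\]
using the convention of \S\ref{D-cables}(ix) for the cable notation. Conversely every generator listed lies in $\mathcal A_2$, so the two algebras coincide. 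The main obstacle I anticipate is the bookkeeping in the existence step: unlike the dimension-five derivation $D$, the derivation $\mathcal D_2$ is not given in a convenient triangular ``single chain'' form, so extracting a clean $\delta_2$-cable rooted at $X$ requires either a careful Dixmier-map computation (tracking which powers of $(XYZ)^2=\delta_2 V$ divide $\mathcal D_2^{\,j}$ of a suitable preimage, mirroring equation~(\ref{divides})) or a direct induction correcting Roberts' sequence $I_{(2,X,i)}$ by kernel elements degree by degree. I would present the latter, shorter route, leaning on the $\Z$-grading of $\mathcal B$ in which $\mathcal D_2$ and $\partial/\partial V$ are homogeneous to guarantee the corrections stay within the relevant graded pieces and the leading coefficient stays normalized.
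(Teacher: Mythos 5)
Your reduction of the theorem to Theorem~\ref{Kuroda04b} is the right frame, and your observation that \emph{any} infinite $\delta_2$-cable $(Q_i)$ rooted at $X$ automatically satisfies $Q_i=\frac{1}{i!}XV^i+(\text{lower order in }V)$ (by integrating $\partial/\partial V$ repeatedly from $Q_0=X$; no rescaling is needed, and rescaling would in any case change the root) correctly disposes of the ``for any such $\hat P$'' clause. The gap is in the existence step, which is the real content of the theorem. Your claim that $\delta_2 I_{(2,X,i)}$ equals $I_{(2,X,i-1)}$ up to an element of $\krn\delta_2$ is not justified and is false in general: both are elements of $\mathcal{A}_2$ with leading term $iXV^{i-1}$ (after normalizing), so their difference merely has $V$-degree at most $i-2$, and such an element of $\mathcal{A}_2$ need not lie in $\krn\delta_2$ (e.g.\ $I_{(2,X,i-2)}$ itself is such an element). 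More fundamentally, producing an exact cable requires showing that each $P_{i-1}$ lies in $\delta_2\mathcal{A}_2$ --- equivalently, that a $V$-antiderivative $\tilde P_i\in\mathcal{B}$ of $P_{i-1}$ can be corrected by an element $g\in k[X,Y,Z,S,T,U]$ with $\mathcal{D}_2g=\mathcal{D}_2\tilde P_i$. This is an image-membership condition for $\mathcal{D}_2$ that does not follow from degree bookkeeping or from the existence of Roberts' sequences, and your ``shorter route'' assumes precisely this point.

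The paper resolves it by a construction you set aside: using Wronskians it exhibits a polynomial subring $\mathcal{B}'=k[X,YZ,F_1,F_2,F_3,V]\cong k^{[6]}$ to which $\mathcal{D}_2$ restricts as the triangular derivation $E$ of line~(\ref{tau}), and then (Theorem~\ref{tau-cable}) builds the cable inside $\krn E$ from the explicit sequence $w_n$ of \cite{Freudenburg.00}, whose divisibility properties guarantee that the Dixmier map $\pi_v$ applied to $xw_{3m}$ lands in the polynomial ring and that $\partial^3/\partial v^3$ steps down the sequence. Some such explicit input (or another proof of the needed surjectivity of $\delta_2$) is indispensable; to repair your argument you should either carry out the Dixmier-map computation you mention as the alternative route, or import Theorem~\ref{tau-cable} together with the Wronskian reduction.
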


In order to construct $\hat{P}$ we first study the restriction of $\mathcal{D}_2$ to a subring $\mathcal{B}^{\prime}$ of $\mathcal{B}$, 
where $\mathcal{B}^{\prime}\cong k^{[6]}$.

\subsection{The Derivation $E$ in Dimension Six} Let $\mathcal{R}=k[x,y,s,t,u,v]=k^{[6]}$ and define the triangular derivation 
$E$ of $\mathcal{R}$ by:
\begin{equation}\label{tau}
v\to x^2y^2\,\, ,\,\, u\to y^3t\,\, ,\,\, t\to y^3s\,\, ,\,\, s\to x^3\,\, ,\,\, x\to 0\,\, ,\,\, y\to 0
\end{equation}
Then $E$ commutes with $\frac{\partial}{\partial v}$ and we let $\tau$ denote the restriction of $\frac{\partial}{\partial v}$ to $\krn E$. 
\begin{theorem}\label{tau-cable} There exists an infinite $\tau$-cable $\hat{\kappa}$ rooted at $x$.
\end{theorem}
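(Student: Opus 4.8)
The plan is to follow closely the proof of {\it Thm.~\ref{cable}}. First I would fix a $\Z^3$-grading on $\mathcal{R}$ for which $E$ is homogeneous of degree $(0,0,-1)$, taking $\deg(x,y,s,t,u,v)=((1,0,0),(0,1,0),(3,0,1),(3,3,2),(3,6,3),(2,2,1))$. Then $\krn E$ is a graded subring, the operator $\partial_v:=\partial/\partial v$ on $\mathcal{R}$ is homogeneous, $\tau$ is homogeneous of degree $(-2,-2,-1)$, and producing $\hat{\kappa}$ is equivalent to producing a sequence $\kappa_n\in\krn E$ $(n\ge 0)$ with $\kappa_0=x$, with each $\kappa_n$ homogeneous of the forced degree $(1+2n,2n,n)$, and with $\partial_v\kappa_n=\kappa_{n-1}$ for $n\ge 1$. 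Any such sequence is automatically an infinite $\tau$-cable rooted at $x$, since each $\kappa_n$ equals $\tau\kappa_{n+1}\in\tau(\krn E)$.

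Second, I would pass to the $v$-free subring $\mathcal{R}'=k[x,y,s,t,u]\cong k^{[5]}$ and set $E'=E|_{\mathcal{R}'}$, so $E':u\to y^3t\to y^3s\to x^3\to 0$ with $x,y$ constant. The essential input is a surjectivity statement: that $E':\mathcal{R}'_{(r,s,j)}\to\mathcal{R}'_{(r,s,j-1)}$ is onto for the triples $(r,s,j)$ that occur below. After localizing at $y$ and rescaling $\hat{t}=t/y^3$, $\hat{u}=u/y^6$, the restriction of $E'$ to $k[x^3,s,\hat{t},\hat{u}]\cong k^{[4]}$ is exactly the down operator with root $x^3$ (and grading $\deg x_i=(1,i)$ after collapsing the trivial middle coordinate), so Proposition~4.1 of \cite{Freudenburg.13}, in the form used to prove {\it Thm.~\ref{cable}}, applies there; combining this with the fact that $\mathcal{R}'_y$ is obtained from $k[x^3,s,\hat{t},\hat{u}]$ by the flat base change $\cdot\otimes_{k[x^3]}k[x]\otimes_k k[y^{\pm1}]$ — on which $E'$ acts $k[x,y^{\pm1}]$-linearly — one extracts the desired surjectivity, and then verifies that the preimages used below lie in $\mathcal{R}'$ (not merely in its localization at $y$) by a divisibility check.

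Third, with this in hand I would build homogeneous $w_n\in\mathcal{R}'$ with $w_0=1$ obeying a recursion in blocks of three edges,
\[
E'w_{3m+1}=x^3w_{3m},\qquad E'w_{3m+2}=y^3w_{3m+1},\qquad E'w_{3m+3}=x^3y^3w_{3m+2}\qquad(m\ge 0),
\]
so that $w_1=s$, $w_2=t$, $w_3=x^3u$, $w_4=x^3(su-\frac12t^2)$, and so on, each step solvable by the surjectivity above. The product of the multipliers over one block is $x^6y^6=(Ev)^3$, which forces $(Ev)^j=(x^2y^2)^j$ to divide $E^j(xw_{3m})$ for $0\le j\le 3m$ (the leading factor $x$ supplying exactly the $x$-deficit that occurs when $j\equiv 2\pmod 3$); and $E^{3m+1}(xw_{3m})=0$ for degree reasons. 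Hence $\kappa_{3m}:=(-1)^{3m}\pi_v(xw_{3m})$, the Dixmier map being taken for the local slice $v$ with $Ev=x^2y^2$, lies in $\krn E$. Put $\kappa_{3m-1}:=\partial_v\kappa_{3m}$ and $\kappa_{3m-2}:=\partial_v\kappa_{3m-1}$. Using identity (\ref{commute}) in the form $\partial_v\pi_v(f)=-\frac{1}{x^2y^2}\pi_v(Ef)$, valid for $v$-free $f$, together with the block recursion, a short computation yields $\partial_v\kappa_{3m-2}=\kappa_{3(m-1)}$, so the blocks fit together; since $\kappa_0=\pi_v(x)=x$, the sequence $\hat{\kappa}=(\kappa_n)$ is the desired infinite $\tau$-cable rooted at $x$.

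The hard part will be the second step. Because of the $y^3$ multipliers, $E'$ is not literally a down operator, so one must either transfer the known surjectivity result across the localization-and-rescaling while carefully tracking the change of grading and the polynomiality of the lifts, or else prove the required surjectivity for the twisted operator directly; alternatively, one could bypass it by writing down an explicit formula for $w_n$, in the spirit of \S7.2.1 of \cite{Freudenburg.06}, and verifying the divisibility and recursion by hand. Once that is settled, the block recursion and the Dixmier-map manipulation of the third step are essentially bookkeeping parallel to the dimension-five construction.
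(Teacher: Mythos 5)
Your third step reproduces the paper's proof almost verbatim: the authors also set $\kappa_{3m}=(-1)^{3m}\pi_v(xw_{3m})$, use identity (\ref{commute}) to compute $\frac{\partial^3}{\partial v^3}\kappa_{3m}=\kappa_{3(m-1)}$, and interpolate the intermediate cable elements by $\partial/\partial v$. Your block recursion $E'w_{3m+1}=x^3w_{3m}$, $E'w_{3m+2}=y^3w_{3m+1}$, $E'w_{3m+3}=x^3y^3w_{3m+2}$ is consistent with the property $E^{3i}w_{3m}=(x^3y^3)^{2i}w_{3(m-i)}$ that the paper records, and your divisibility bookkeeping (the $x$-deficit at $j\equiv 2\pmod 3$ absorbed by the leading factor $x$) correctly derives $(x^2y^2)^j\mid E^j(xw_{3m})$, hence $\pi_v(xw_{3m})\in\mathcal{R}$, from that recursion. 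The one real difference is how the sequence $w_n$ is obtained: the paper does not construct it, but cites equation (6) and Lemma 2 of \cite{Freudenburg.00} for its existence together with the polynomiality of $\pi_v(xw_{3m})$.

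The gap is exactly where you flag it, and it is not a routine ``divisibility check.'' Surjectivity of $E'$ on the relevant graded pieces of the localization $\mathcal{R}'_y$ does follow from Proposition 4.1 of \cite{Freudenburg.13} via your rescaling $\hat{t}=t/y^3$, $\hat{u}=u/y^6$ (with the same care about the boundary case $2s=3r$ that forces the paper, in the proof of {\it Thm.~\ref{cable}}, to route through $t\cdot Q_{(2m,3m)}\subset Q_{(2m+1,3m)}$). But a homogeneous preimage in $\mathcal{R}'_y$ need not lie in $\mathcal{R}'$, and $E'$ does not interact with the decomposition by powers of $y$ in a way that lets you simply truncate: the edge $s\to x^3$ preserves the $y$-exponent of a monomial while $t\to y^3s$ and $u\to y^3t$ raise it by $3$, so the negative-$y$ part of a lift can contribute to the nonnegative-$y$ part of its image, and one cannot conclude that the integral part of a lift is again a lift. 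Showing that the lifts can be taken in the polynomial ring is precisely the content of Lemma 2 of \cite{Freudenburg.00}, i.e., the hard point of the dimension-six counterexample, and your sketch does not supply an argument for it. Your stated fallback --- writing the $w_n$ down explicitly in the spirit of \S7.2.1 of \cite{Freudenburg.06} and verifying the recursion and divisibility directly --- is legitimate and is in effect what the paper does by citation; with that substitution the argument is complete.
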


\begin{proof}
According to equation (6) and Lemma 2 of \cite{Freudenburg.00}, there exists a sequence $w_n\in k[x,y,z,s,t,u]$, $n\ge 0$, with the following properties: Let $\pi_v:\mathcal{R}\to (\krn E)_{xy}$ be the Dixmier map for $E$ associated to the local slice $v$. 
\begin{itemize}
\item [(i)] $w_0=1$
\smallskip
\item [(ii)] $E^{3i}w_{3m}=(x^3y^3)^{2i}w_{3(m-i)}$ \quad $(m\ge 1, 0\le i\le m)$
\smallskip
\item [(iii)] $(-1)^{3m}\pi_v(xw_{3m})\in\mathcal{R}$ \quad $(m\ge 0)$
\end{itemize}
\medskip
Given $m\ge 0$, define $\kappa_{3m}\in\mathcal{R}$ by $\kappa_{3m}=(-1)^{3m}\pi_v(xw_{3m})$. 
By using (\ref{commute}) in {\it Sect.~2.1}, we see that for $m\ge 1$:
\begin{eqnarray*}
\frac{\partial^3}{\partial v^3}\kappa_{3m} &=& \frac{\partial^2}{\partial v^2} (-1)^{3m-1}\pi_v(xEw_{3m})\frac{\partial}{\partial v}\frac{v}{x^2y^2}\\
&=& \frac{\partial}{\partial v} (-1)^{3m-2}\pi_v(xE^2w_{3m})\frac{1}{x^2y^2}\frac{\partial}{\partial v}\frac{v}{x^2y^2}\\
&=&  (-1)^{3m-3}\pi_v(xE^3w_{3m})\frac{1}{x^4y^4}\frac{\partial}{\partial v}\frac{v}{x^2y^2}\\
&=& (-1)^{3m-3}\pi_v(x(x^3y^3)^2w_{3(m-1)})\frac{1}{x^6y^6}\\
&=& (-1)^{3(m-1)}\pi_v(xw_{3(m-1)})\\
&=& \kappa_{3(m-1)}
\end{eqnarray*}
Define:
\[
\kappa_{3m-1}=\frac{\partial}{\partial v}\kappa_{3m} \quad {\rm and}\quad \kappa_{3m-2}=\frac{\partial}{\partial v}\kappa_{3m-1} \quad (m\ge 1)
\]
Then $\hat{\kappa}:=(\kappa_n)$ is a $\tau$-cable rooted $x$. 
\end{proof}

\subsection{Proof of Theorem~\ref{Roberts}} 

Given $f_1,...,f_n\in\mathcal{B}$, recall that the Wronskian of $f_1,...,f_n$ relative to $\mathcal{D}_2$ is:
\[
W_{\mathcal{D}_2}(f_1,...,f_n) = \det \left( \mathcal{D}_2^if_j\right) \quad {\rm where} \quad 0\le i\le n-1\,\, ,\,\, 1\le j\le n
\]
See \cite{Freudenburg.06}, {\S}2.6. Define $F_1,F_2,F_3\in \mathcal{B}$ by:
\[
\textstyle F_1=S\,\, ,\,\, F_2=\frac{1}{2}W_{\mathcal{D}_2}(S,TU) \,\, ,\,\, F_3 = \frac{1}{6}X^{-3}W_{\mathcal{D}_2}(S,TU,STU)
\]
Then $\mathcal{D}_2$ restricts to the subring $\mathcal{B}^{\prime}=k[X,YZ,F_1,F_2,F_3,V]=k^{[6]}$, where:
\[
\mathcal{D}_2F_3 = (YZ)^3F_2\,\, ,\,\, \mathcal{D}_2F_2=(YZ)^3F_1\,\, ,\,\, \mathcal{D}_2F_1=X^3\,\, ,\,\, \mathcal{D}_2V=X^2(YZ)^2
\]
Therefore, setting $x=X$, $y=YZ$, $s=F_1$, $t=F_2$, $u=F_3$, and $v=V$, we see that the restriction of $\mathcal{D}_2$ to 
$\mathcal{B}^{\prime}$ equals $E$, as defined in line (\ref{tau}) above. 
By {\it Thm.~\ref{tau-cable}} there exists a $\delta_2$-cable $\hat{P}$ rooted at $X$ such that $\hat{P}\subset\mathcal{B}^{\prime}$. In particular, $\hat{P}=(P_i)$ has the form 
$P_i=\frac{1}{i!}XV^i + (\text{terms of lower degree in $V$})$. 

Consequently, $\alpha\hat{P}$ is a $\delta_2$-cable $\hat{P}$ rooted at $Y$, and $\alpha^2\hat{P}$ is a $\delta_2$-cable $\hat{P}$ rooted at $Z$. 
The proof is thus completed by applying Kuroda's result ({\it Thm.~\ref{Kuroda04b}} above). $\Box$

\begin{remark} {\rm It seems likely that the structure of $\mathcal{A}_2$ given in {\it Thm.~\ref{Roberts}} can be extended from $m=2$ to all $m\ge 2$. To do so by the method above requires a generalization of {\it Thm.~\ref{tau-cable}}. }
\end{remark}


\section{Further Comments and Questions}

\subsection{Tanimoto's Generators}\label{Tanimoto} In \cite{Tanimoto.06}, Tanimoto gives a set of generators for the ring $A$ by specifying a SAGBI basis consisting of $h$ together with homogeneous sequences $\lambda_n,\mu_n,\nu_n$ whose leading $v$-terms are $av^n,Fv^n$ and $Gv^n$, respectively. 
From {\it Cor.~\ref{dimension1}(a)} we see that $A$ is generated as a $k$-algebra by $h$ and the sequence $\lambda_n$, meaning $\mu_n$ and $\nu_n$ are redundant. Tanimoto also computed the Hilbert series for $A$, which is rational even though $A$ is not finitely generated. 

\subsection{Fundamental Problem for Cable Algebras}
\noindent If $B$ is an affine $k$-domain and $D\in {\rm LND}(B)$ is non-zero, then $B$ is a cable algebra and $(B,D)$ is a cable pair. We ask the following question, which we term the {\it Fundamental Problem for Cable Algebras}.
\begin{quote}
  Let $B$ be an affine $k$-domain and $D\in {\rm LND}(B)$. If $I_{\infty}\ne (0)$, does $B$ have an infinite $D$-cable? Equivalently, if every $D$-cable of $B$ is terminal, does $I_{\infty}=(0)$?
\end{quote}
Note that, if every $D$-cable of $B$ is terminal, then since $B$ is affine, there exist an integer $n\ge 1$ and terminal $D$-cables $\hat{t}_1,...,\hat{t}_n$ such that $B=k[\hat{t}_1,...,\hat{t}_n]$. 

\subsection{$Q$-Ideals} We would like know which $Q$-ideals are prime ideals of $\Omega$. For each even $n\ge 2$, consider the following statements regarding the fundamental $Q$-ideals.
\begin{itemize}
\item [(a)] $\mathcal{Q}_n$ is a prime ideal of $\Omega$
\smallskip
\item [(b)] ${\rm tr.deg}_k\Omega/\mathcal{Q}_n = \frac{n}{2}+1$
\smallskip
\item [(c)] $\Omega/\mathcal{Q}_n$ is a simple cable algebra over $k$
\end{itemize}
It is shown above that these are true statements for $n=2$ and $n=4$. Are these statements true for $n\ge 6$? 

\subsection{The Dimension Four Case}\label{dim4} In his famous paper \cite{Nagata.59}, Nagata presented the first counterexamples to Hilbert's Fourteenth Problem. In one of these, the transcendence degree of the ring of invariants over the ground field is four, and Nagata asked whether this could be reduced to three. In \cite{Kuroda.04a}, the second author gave an affirmative answer to Nagata's question in the form of the kernel of a derivation of $k^{[4]}$, but this derivation is not locally nilpotent (see also \cite{Kuroda.05b}). 

It remains an open question whether an algebraic $\G_a$-action on the polynomial ring $k^{[4]}$ always has a finitely generated ring of invariants. In \cite{Daigle.Freudenburg.01b} it is shown that this is the case for triangular actions, and this result was later generalized in 
\cite{Bhatwadekar.Daigle.09} to the case of actions having rank less than 4. The next natural case to consider is the case $T$ is a locally nilpotent derivation of $k^{[4]}$ of rank 4, and $T$ restricts to a coordinate subring $B=k^{[3]}$. If $k^{[4]}=B[v]$, then the partial derivative $\partial /\partial v$ restricts to $\krn T$. 
It is hoped that a good understanding of cable structures of invariant rings might lead to a complete solution of the dimension four case. 


\bigskip

\noindent \address{Department of Mathematics\\
Western Michigan University\\
Kalamazoo, Michigan 49008  USA}\\
\email{gene.freudenburg@wmich.edu}
\bigskip

\noindent\address{Department of Mathematics and Information Sciences\\
Tokyo Metropolitan University\\
Hachioji, Tokyo 192-0397 Japan} \\
\email{kuroda@tmu.ac.jp}

\end{document}